\newtheorem{main-theorem}{Theorem}
\newtheorem{proposition}{Proposition}[section]
\newtheorem{corollary}[proposition]{Corollary}
\newtheorem{lemma}[proposition]{Lemma}
\newtheorem{theorem}[proposition]{Theorem}
\newtheorem{definition}[proposition]{Definition}
\theoremstyle{remark}
\newtheorem{remark}[proposition]{Remark}
\newtheorem*{acknowledgements}{Acknowledgements}
\def\R{\mathbb R} 
\def\C{\mathbb C} 
\def\N{\mathbb N} 
\def\dd{\mathrm d} 
\def\i{\mathrm i} 
\def\SO{\mathrm{SO}} 
\def\GL{\mathrm{GL}} 
\def\M{\mathrm{M}} 
\def\O{\mathrm{O}} 
\def\U{\mathrm{U}} 
\def\SS{\mathbb{S}} 
\def\id{\mathrm{id}} 
\def\Id{\mathrm{Id}} 
\def\T{\mathsf{T}} 
\DeclareMathOperator*{\vspan}{span} 
\DeclareMathOperator*{\supp}{supp} 
\DeclareMathOperator*{\tr}{tr} 
\author[P. Caro]{Pedro  Caro}
\address[PC]{BCAM - Basque Center of Applied Mathematics, Bilbao, Spain.}
\email{pcaro@bcamath.org}
\author[C. J. Meroño]{Cristóbal J. Meroño}
\address[CJM]{Universidad Politécnica de Madrid, ETSI Caminos, Departmento de Matemática e Informática, Madrid, Spain.}
\email{cj.merono@upm.es}
\author[I. Parissis]{Ioannis Parissis}
\address[IP]{Departamento de Matem\'aticas, Universidad del Pais Vasco, Aptdo. 644, 48080 Bilbao, Spain and Ikerbasque, Basque Foundation for Science, Bilbao, Spain}
\email{ioannis.parissis@ehu.es}
\title[Rotational smoothing]{Rotational smoothing}
\date{\today}
\keywords{Rotational smoothing, multipliers with singular symbols,
Riesz potentials, Cauchy transform, special orthogonal group.}
\begin{document}

\begin{abstract}
Rotational smoothing is a phenomenon consisting in a gain of regularity by means of {averaging over} rotations. This phenomenon is present in operators that regularize only in certain directions, in contrast to operators regularizing in all directions. The gain of regularity is the result of rotating the directions where the corresponding operator performs the smoothing effect. In this paper we carry out a systematic study of the rotational smoothing  for a class of operators that includes $k$-vector-space Riesz potentials in $\R^n$ with $k < n$, and the convolution with fundamental solutions of elliptic constant-coefficient differential operators acting on $k$-dimensional linear subspaces. Examples of the {latter} type of operators are the planar Cauchy transform in $\R^n$, or a solution operator for the transport equation in $\R^n$. The analysis of rotational smoothing is motivated by the resolution of some inverse problems under low-regularity assumptions.
\end{abstract}

\maketitle

\section{Introduction} In this paper we introduce the concept of rotational smoothing to refer to a phenomenon consisting in the gain of regularity by means of {averaging over} rotations. In order to illustrate this phenomenon, let us consider an elliptic homogeneous polynomial of order $m \in \N$ with $k \in \N$ variables and complex coefficients
\begin{equation}\label{id:polynomial}
p(\xi) = \sum_{|\alpha| = m} c_\alpha \xi^\alpha.
\end{equation}
Here $c_\alpha \in \C$, $\xi = (\xi_1, \dots, \xi_k)$, $\alpha = (\alpha_1, \dots, \alpha_k) \in \N_0^k$ is a multi-index so that $|\alpha| = \alpha_1 +\dots + \alpha_k$ and $\xi^\alpha = \xi_1^{\alpha_1} \dots \xi_k^{\alpha_k}$. By calling this homogeneous polynomial elliptic, we mean that if $p(\xi) = 0$ for $\xi \in \R^k$ then $\xi = 0$. It is well known that the differential operator 
\begin{equation}
P = {p(-\i\partial)} =\sum_{|\alpha| = m} (-\i)^m c_\alpha \partial^\alpha
\label{id:elliptic_operator}
\end{equation}
associated to \eqref{id:polynomial} ---by analogy $\partial^\alpha = \partial_{x_1}^{\alpha_1} \dots \partial_{x_k}^{\alpha_k}$--- admits a fundamental solution $E$ in $\R^k$ so that $u = E \star f $ is a solution of the equation $Pu = f$ in $\R^k$ and
\begin{equation}
\| u \|_{\dot{H}^m(\R^k)} \lesssim \| f \|_{L^2(\R^k)}
\label{in:regularity_gain}
\end{equation}
for all $f \in L^2(\R^k)$.\footnote{Throughout the paper we write $a \lesssim b$ or equivalently $b \gtrsim a$, when $a$ and $b$ are positive constants and there exists $C > 0$ so that $a \leq C b$. We refer to $C$ as the implicit constant. Additionally, if $a \lesssim b$ and $b \lesssim a$, we write $a \eqsim b$.} Here and throughout the article the semi-norm of the homogeneous Sobolev space $\dot{H}^s (\R^n)$ with $s \in \R$ and $n \in \N$ is 
given by
\[
\| \phi \|_{\dot{H}^s(\R^n)} = \Big( \int_{\R^n} |\xi|^{2 s} |\widehat{\phi}(\xi)|^2 \, \dd \xi \Big)^{1/2},
\]
where 
$\widehat{\phi}(\xi) = (2\pi)^{-n/2} \int_{\R^n} e^{-\i\xi \cdot x} \phi(x) \, \dd x$ 
denotes the Fourier transform of $\phi \in \mathcal{S}(\R^n)$ in the Schwartz 
class. {Note that inequality \eqref{in:regularity_gain} entails} a gain of $m$-derivatives for $u$ with respect to the regularity of $f$. 

Suppose now that $f$ depends on $n$ variables $x = (x_1, \dots, x_n)$ and $P$ still denotes the differential operator \eqref{id:elliptic_operator} of $k$ variables $x^\prime = (x_1, \dots, x_k)$ with $n > k$. If $u$ denotes the solution of $P u = f$ in $\R^n$ given by $u = \tilde{E} \star f$ with $\tilde{E} = E \otimes \delta_0$ and $\delta_0$ denoting the Dirac distribution in $\R^{n - k}$ supported at the origin, then we can only expect that
\[
\Big( \int_{\R^{n - k}} \| u (\centerdot, x^{\prime\prime}) \|_{\dot{H}^m(\R^k)}^2 \dd x^{\prime\prime} \Big)^{1/2} \lesssim \| f \|_{L^2(\R^n)}.
\]
This means that the only possible regularity gain happens on the first $k$ 
variables and definitely not on the latter $n - k$ ones. This is due to the fact 
that the convolution with $\tilde{E}$ in $x = (x^\prime, x^{\prime\prime})$ has 
the component $E$ which regularizes in $x^\prime$ and the component $\delta_0$ 
which acts as the identity on the $x^{\prime\prime}$. 

After this discussion the question of rotational smoothing can be set {up} as 
follows. Consider the family of polynomials of $n$ variables 
$\{\tilde{p}_Q(\xi) : Q \in \SO(n) \}$ given by
\begin{equation}
\tilde{p}_Q(\xi) = p (Q e_1 \cdot \xi, \dots, Q e_k \cdot \xi)
\label{id:rotated_polynomial}
\end{equation}
with $p$ the polynomial function in \eqref{id:polynomial}. Here and throughout the paper $\{e_1,\ldots,e_n\}$ denotes the standard basis of $\R^n$ and
\[
\SO(n) = \{ Q \in \GL(n) : \, Q^\T Q = \id, \, \det Q = 1 \},
\]
where $\GL(n)$ is the general linear group of $n\times n$ real matrices, $Q^\T$ 
is the transpose of $Q$ and $\id$ stands for the identity matrix.
Note that $\tilde{p}_Q(\xi)$ is {not} elliptic any more since it vanishes on 
linear subspaces of codimension $k$.
We continue by writing $\tilde{P}(Q)$ {for} the 
differential operator associated to the polynomial $\tilde{p}_Q(\xi)$, and {let} 
$u(\centerdot,Q)$ be the solution of the equation 
$\tilde{P}(Q) u(\centerdot, Q) = f$ in $\R^n$ given by
\[
u(x, Q) = ([Q^\T]^\ast \tilde{E} \star f)(x),\qquad (x, Q) \in \R^n \times \SO(n),
\] 
where $[Q^\T]^\ast \tilde{E}$ denotes the pull-back of $\tilde{E}$ by the {map} 
$ x\in \R^n \mapsto Q^\T x \in \R^n$: {namely the pull-back acts as 
$[Q^\T]^\ast \phi (x) = \phi (Q^\T x)$ for all $\phi \in \mathcal{S}(\R^n)$ and 
$x\in \R^n$.} Then, the rotational smoothing phenomenon consists of a gain of 
$m$-derivatives in all the variables when performing an average {over 
rotations $Q\in \SO(n)$.} To put in quantitative terms, we ask whether it is 
possible to have an inequality of the following type
\[
\bigg( \int_{\SO(n)} \| u (\centerdot, Q) \|_{\dot{H}^m(\R^n)}^2 \dd \mu(Q) \bigg)^{1/2} \lesssim \| f \|_{L^2(\R^n)}
\]
for all $f \in L^2(\R^n)$, where $\mu$ stands for the normalized Haar measure on the special orthogonal group $\SO(n)$.

\subsection{Main results}
In this article, we carry out a systematic study of the phenomenon of rotational 
smoothing for a general class of operators. 
These operators are defined by considering, for each 
$\alpha\in\R_+=(0,\infty)$, a continuous function $q_\alpha:\R^k\to\C$ 
satisfying the following properties:
\begin{enumerate}[label=(\alph*), ref=\textnormal{\alph*}]
\item \label{it:ellipticity} If $q_\alpha (\eta) = 0$ then $\eta = 0$.
\item \label{it:homogeneity} The equality $ q_\alpha (\lambda \eta) = \lambda^\alpha q_\alpha (\eta)$ 
holds whenever $\lambda \in \R_+$ and $\eta \in \R^k \setminus \{ 0 \}$.
\end{enumerate}
Then, define the symbols
\[ 
p_\alpha (\xi, Q) = q_\alpha(Q e_1 \cdot \xi, \dots, Q e_k \cdot \xi),\qquad  (\xi, Q) \in \R^n \times \SO(n). 
\]
Let $P_\alpha(Q)$ be the operator
defined by
\[
[P_\alpha (Q) f] (x) = \frac{1}{(2 \pi)^{n/2}} \int_{\R^n} e^{\i x \cdot \xi} \, p_\alpha (\xi, Q) \, \widehat{f}(\xi) \dd \xi,\quad  (x, Q) \in \R^n \times \SO(n),
\]
for $f \in \mathcal{S}(\R^n)$. 
{We ask whether we can provide a solution operator $S_\alpha$ so that if
$u(x, Q) = S_\alpha f(x, Q) $, then $P_\alpha (Q) u (\centerdot, Q) = f$ in 
$\R^n$ for all $Q \in \SO(n)$ and 
\begin{equation}
\label{in:rot-smooth_motivation}
\bigg( \int_{\SO(n)} \| S_\alpha f(\centerdot, Q) \|_{\dot{H}^\alpha(\R^n)}^2 \dd \mu(Q) \bigg)^{1/2} \lesssim \| f \|_{L^2(\R^n)}
\end{equation}
for all $f \in L^2(\R^n)$.} Note {again} that these operators are not elliptic since their symbols vanish on linear subspaces of codimension $k$.

In order to clarify how this framework generalizes the previous discussion about 
differential operators, it may be convenient to note that if we choose $q_m = p$ 
with $p$ the polynomial function in \eqref{id:polynomial}, then $p_m(\xi, Q) $ is 
the polynomial $ \tilde{p}_Q(\xi)$ in \eqref{id:rotated_polynomial} and $P_m(Q) $ 
is its corresponding differential operator $\tilde{P}(Q)$. The generalization 
described above allows us to also introduce {non-local operators  in our analysis, 
such} as the fractional Laplacian acting on $k$-dimensional vector subspaces of 
$\R^n$. Note that if $q_\alpha(\eta) = |\eta|^\alpha$ for all
$\eta \in \R^k$, then $P_\alpha (\id) = (-\Delta_{x^\prime})^{\alpha/2}$, where 
$\Delta_{x^\prime}$ denotes $\partial_{x_1}^2 + \dots + \partial_{x_k}^2$.

Before stating the result quantifying the rotational smoothing phenomenon, we 
need to complement the conditions
\eqref{it:ellipticity} and \eqref{it:homogeneity} with an additional cancellation 
assumption, whenever $\alpha - k \in \N_0$:
\begin{enumerate}[label=(\alph*),ref=\alph*]
\setcounter{enumi}{2}
\item \label{it:cancellation} If $\alpha - k \in \N_0$, then $q_\alpha$ satisfies 
that
\[
\int_{\SS^{k - 1}} \frac{\theta^\beta}{q_\alpha(\theta)} \, \dd S(\theta) = 0, \qquad \forall \beta \in \N_0^k, \quad |\beta| = \alpha - k,
\]
\end{enumerate}
where the volume form {$\dd S$ denotes the usual spherical measure on $\SS^{k- 1}$.}  {In the case $k = 1$ we have that $\SS^0 = \{ - 1, 1\}$, and \eqref{it:cancellation} is interpreted as
\[
 \int_{\SS^{k - 1}} \frac{\theta^\beta}{q_\alpha(\theta)} \, \dd S(\theta) = \frac{(-1)^{\alpha - 1}}{q_\alpha (-1)} + \frac{1}{q_\alpha (1)}=0.
\]}

This discussion presents the bare minimum framework and motivation to state our 
main result. {From now on we set}
\[
\lceil \alpha - k \rceil = \min \{ l \in \N_0 : \alpha - k < l \}.
\]

\begin{main-theorem}\label{th:rotational_smoothing}\sl
Assume $n \in \N$ with $n \geq 2$. Let $k \in \{1, \dots, n - 1\}$ and $\alpha > 0$ be given,  and consider $q_\alpha \in C(\R^k)$ satisfying  \eqref{it:ellipticity}, \eqref{it:homogeneity} and \eqref{it:cancellation}. There exists a solution operator $S_\alpha$ and a constant $C > 0$ depending on $n$, $k$, $\alpha$ and $\delta$ such that, for all $f \in \mathcal{S}(\R^n)$ we have 
that
\[ 
\| S_\alpha f \|_{H^{-\delta} (\SO(n); \dot{H}^\alpha(\R^n))} = C \| f\|_{L^2(\R^n)}
\]
with $\delta = \lceil \alpha - k \rceil$ if $\alpha - k/ 2 < \lceil \alpha - k \rceil$, and $\delta > \alpha - k/2$ if $\alpha - k/ 2 \geq \lceil \alpha - k \rceil$.
\end{main-theorem}

Note that in the range $0 < \alpha < k/2$, we can choose $\delta = 0$ so that the
identity in \Cref{th:rotational_smoothing} implies the inequality 
\eqref{in:rot-smooth_motivation}. Unfortunately, whenever $\alpha \geq k/2$
we have to replace the $L^2$-average on the rotations by a weaker quantity 
captured by the $H^{-\delta}$-norm on $\SO(n)$. Nevertheless, this {result} is 
satisfactory in terms of the regularity gain because {the} $-\delta$ derivatives 
of the Sobolev space in $\SO(n)$ {act directly} on the operator and not on the 
functions of its domain. Thus, we still obtain a gain of $\alpha$ derivatives in
$\R^n$.

At this point, the statement of the result is vague in the sense that it is not 
clear if the solution operator $S_\alpha$ can be explicitly defined. 
Additionally, the restrictions on $\delta$ imposed by the 
relations between $\alpha$ and $k$ may seem mysterious. For this reason, we 
present in \Cref{sec:precise_statement} a more precise version of 
\Cref{th:rotational_smoothing} where the definition of $S_\alpha$ and the 
constant $C$ are explicit. The discussion in 
\Cref{sec:precise_statement} also clarifies the restrictions on $\delta$.
For the case $0 < \alpha < k$, the definition of $S_\alpha$
is very simple and can be found in \eqref{id:solution_map_multiplier}.

In order to analyse the need of replacing the 
$L^2$-average on the rotations by the 
$H^{-\delta}$-norm on $\SO(n)$, we choose  $q_\alpha (\eta) = |\eta|^\alpha$ for 
all $\eta \in \R^k$ with $0<\alpha<k$ and define the solution map, denoted in 
this case by $I_\alpha$, as 
\begin{equation*}
I_\alpha  f(x,Q) = \frac{1}{(2 \pi)^{n/2}} \int_{\R^n} e^{\i x \cdot \xi} \, \frac{\widehat{f}(\xi)}{|(Q e_1 \cdot \xi)^2 + \dots + (Q e_k \cdot \xi)^2|^{\alpha/2}} \, \dd \xi, \qquad (x, Q) \in \R^n \times \SO(n),
\end{equation*}
for $f \in \mathcal{S}(\R^n)$.
The solution operator $I_\alpha$ is the $k$-vector-space Riesz potential, and its 
definition is possible because the function 
$$ \xi\in \R^n \setminus \Sigma_Q \longmapsto |(Q e_1 \cdot \xi)^2 + \dots + (Q e_k \cdot \xi)^2|^{-\alpha/2}, $$ 
with
$\Sigma_Q = \{  \xi \in \R^n : Q e_1 \cdot \xi = \dots = Q e_k \cdot \xi = 0 \}$,
can be extended to a locally-integrable function in $\R^n$. However, 
if $k/2 \leq \alpha < k$ and $f \in \mathcal{S}(\R^n)$ satisfies that 
$\widehat{f}(\xi) \neq 0 $ whenever $ \xi \in \Sigma_Q$ then
\[ \| I_\alpha f  (\centerdot, Q) \|_{\dot{H}^\alpha (\R^n)}^2 = \int_{\R^n} \frac{|\xi|^{2\alpha}}{|(Q e_1 \cdot \xi)^2 + \dots + (Q e_k \cdot \xi)^2|^{\alpha}} |\widehat{f}(\xi)|^2 \, \dd \xi = \infty, \qquad  \forall\, Q \in \SO(n). \]
This follows from the fact that the function
$$\xi\in \R^n \setminus \Sigma_Q \mapsto |\xi|^\alpha/|(Q e_1 \cdot \xi)^2 + \dots + (Q e_k \cdot \xi)^2|^{\alpha/2} $$
cannot be extended to a locally-square-integrable function in $\R^n$. 
We remark that the first identity above is a consequence of Plancherel's theorem. Thus, an $L^2$-average version of rotational smoothing in the 
range $k/2 \leq \alpha < k$ cannot be global. A possible way around this is to 
postulate a local version of the rotational smoothing estimate where the 
left-hand side of the identity in \Cref{th:rotational_smoothing} would be 
replaced, in the case of the $k$-vector-space Riesz potential, by
\[
\bigg( \int_{\SO(n)} \| (-\Delta)^{\alpha/2} I_\alpha  f(\centerdot,Q) \|^2_{L^2 (B_R)} \, \dd \mu(Q) \bigg)^{1/2}.
\]
Here $(-\Delta)^{\alpha/2}$ denotes the fractional Laplacian in $\R^n$ and  
$B_R = \{ x \in \R^n : |x| < R \}$. However, there is no hope even for an 
inequality of this type without loss of derivatives. 
In fact, we will see in the following result that, when localizing in balls and 
then averaging in $\SO(n)$, the $k$-vector-space Riesz potential exhibits a loss 
of derivatives in the range $ k/2 \leq \alpha < k $.

\begin{main-theorem}\label{th:loss_of_derivatives}\sl Assume $n \in \N$ with $n \geq 2$. Let $k \in \{1, \dots, n - 1\}$ and $ k/2 \leq \alpha < k $ be given. For every $R > 0$ there exists a sequence of functions $\{ f_N : N \in \N \}$ such that $\| f_N \|_{\dot{H}^s(\R^n)} \eqsim N^s $ with $s \in \R$ and
\[
\int_{\SO(n)} \| (-\Delta)^{\alpha/2} I_\alpha  f_N(\centerdot,Q) \|^2_{L^2 (B_R)} \, \dd \mu(Q) \gtrsim \mathfrak{g} (RN) 
\]
for all $N > \pi/(2 R)$. The growth function $\mathfrak{g}$ is given by $\mathfrak{g} (t) = \log t$ if $\alpha = k/2$ and $\mathfrak{g} (t) = t^{2 \alpha -k}$ if $\alpha > k/2$. The implicit constants are independent of $R$ and $N$.
\end{main-theorem}

This theorem ensures a loss of at least $s$ derivatives with $s = \alpha - k/2 $ 
if $\alpha > k/2$ and $s > 0 $ if $\alpha = k/2$, {compared to the maximum 
expected gain of $\alpha$ derivatives.}  The proof of 
{\Cref{th:loss_of_derivatives}} is given in \Cref{sec:counterexamples}.
Putting together Theorems \ref{th:rotational_smoothing} and 
\ref{th:loss_of_derivatives}
we see that the $s$ uncontrolled derivatives of {\Cref{th:loss_of_derivatives}} 
are regularized by $-\delta$ derivatives in $\SO(n)$ of 
\Cref{th:rotational_smoothing} ---note that $\delta\geq s$ in the range 
$k > \alpha \geq k/2$.

The homogeneity assumption \eqref{it:homogeneity} for the function $q_\alpha$ 
plays a key role in the proof of \Cref{th:rotational_smoothing}. This property is 
immediately transferred to the distribution in $\R^k \setminus \{ 0 \}$ 
determined by the function 
$ \eta \in \R^k \setminus \{ 0 \} \mapsto 1/q_\alpha(\eta) $.
To construct the solution operator $S_\alpha$, we need to extend 
this distribution to belong to
$\mathcal{S}^\prime (\R^k)$. However, in order for the extension to preserve the 
homogeneity in the case $\alpha - k \in \N_0$, we need to impose the 
cancellation condition \eqref{it:cancellation}.
This is analysed in \Cref{sec:extension}. Furthermore, as derived from the 
following result, this cancellation property is needed not only to preserve 
the homogeneity but also to guarantee that the rotational smoothing phenomenon 
yields the maximum gain of $\alpha$ derivatives.

\begin{main-theorem}\label{th:cancellation_counterexample}\sl
Assume $n \in \N$ with $n \geq 2$. Let 
$k \in \{1, \dots, n - 1\}$ and $\alpha > 0$ be such that $\alpha - k \in \N_0$.
Consider $q_\alpha \in C(\R^k)$ satisfying 
\eqref{it:ellipticity}, \eqref{it:homogeneity}, but assume that
\eqref{it:cancellation} does not hold. 
Then, there exists a 
sequence of functions $\{ f_N \in \mathcal S(\R^n) : N \in \N \}$ satisfying that $\| f_N \|_{L^2(\R^n)} =1$  and a number $N_0 \in \N$ such that
\[ \| S_\alpha f_N \|_{H^{-\delta} (\SO(n); \dot{H}^\alpha(\R^n))}  \gtrsim \log N,\]
for all $N \ge N_0$.
\end{main-theorem}

This theorem ensures a logarithmic loss of derivatives with respect to the 
maximum expected gain of $\alpha$ derivatives.  The proof of 
{\Cref{th:cancellation_counterexample}} is given in {\Cref{sec:counterexamples}}.

\subsection{Rotational smoothing and inverse problems}

The study of rotational smoothing is motivated by certain inverse boundary value problems in a non-regular setting. In order to give the relevant examples that drive the research reported in this paper,  we consider the inverse problem consisting in  determining the {advection term} in the {convection equation} from non-invasive measurements. One can consider two different physical situations when formulating this problem. 

In a first approach, we can assume the advection to be described by a static vector field $A$ and data to be obtained from boundary measurements of solutions of the steady-state {convection equation} $\Delta u + A \cdot \nabla u = 0$ in a bounded open subset $D \subset \R^n$ with $n \geq 3$. Usually, boundary data is encoded in the Dirichlet-to-Neumann map defined by
\[
\Lambda_A : f \mapsto \nu \cdot \nabla u |_{\partial D} 
\]
where $\partial D$ denotes the boundary of $D$, $\nu$ is the outward unit normal vector along $\partial D$, and $u$ is the solution of the convection equation in $D$ such that $u|_{\partial D} = f$ ---for more details on the formulation of this inverse boundary value problem see \cite{zbMATH02237598,zbMATH02123614,zbMATH06460406}. The only currently available procedure for determining $A$ from $\Lambda_A$ proceeds by constructing some special solutions for the differential operator  $\Delta + A \cdot \nabla$. These solutions are called \emph{complex geometrical optics} {and they are expressed as follows}
\[
u_\zeta (x) = e^{\zeta \cdot x} (v_\zeta(x) + w_\zeta(x)), 
\] 
where $\zeta \in \C^n$ is such that $\zeta \cdot \zeta = 0$; equivalently $|\Re \zeta| = |\Im \zeta|$ and $\Re \zeta \cdot \Im \zeta = 0$ with $\Re \zeta$ and $\Im \zeta$ denoting the real and imaginary parts of $\zeta$ respectively. The \emph{amplitude} $v_\zeta$ and the \emph{correction term} $w_\zeta$ are constructed as solutions of the equations
\begin{equation}
\label{eq:pre_complex_transport}
2 \zeta \cdot \nabla v_\zeta + \zeta \cdot A v_\zeta = 0,
\end{equation}
and
\begin{equation}
\label{eq:reminder}
(\Delta + 2 \zeta \cdot \nabla + A \cdot \nabla + \zeta \cdot A) w_\zeta = - (\Delta + A \cdot \nabla) v_\zeta.
\end{equation}
In order to solve \eqref{eq:pre_complex_transport} we choose $\zeta = \tau Q (e_1 + i e_2)$ with $Q \in \SO(n)$ and the amplitude in the form $v_\zeta = e^{\psi_\zeta}$, with $\psi_\zeta$ solving
\begin{equation}
\label{eq:complex_transport}
Q (e_1 + i e_2) \cdot \nabla \psi_\zeta = - \frac{1}{2} Q (e_1 + i e_2) \cdot A.
\end{equation}
As we see from \eqref{eq:reminder}, the regularity of $v_\zeta$ is important in 
order to ensure that the right-hand side  $- (\Delta + A \cdot \nabla) v_\zeta$ 
is in an appropriate space so that we can solve for $w_\zeta$. In the case that 
the components of $A$ {are just} in  $L^\infty (D)$, the a priori 
regularity of solutions of the equation \eqref{eq:complex_transport} is not 
{sufficient}. For this reason, one needs to look  for a hidden smoothing 
phenomenon, namely rotational smoothing. Note that {solutions of} equation 
\eqref{eq:complex_transport} fit in the framework {of
\eqref{id:solution_map_multiplier},} with $\alpha = 1$, $k = 2$ and 
{$q_1 (\eta) = i \eta_1 - \eta_2$} for all 
$\eta \in \R^2$, {so that 
\[
p_1(\xi,Q)= q_1(Qe_1\cdot\xi,Qe_2\cdot\xi)=\xi \cdot Q(ie_1 - e_2).
\]
{Indeed the solutions of \eqref{eq:complex_transport} }are given by the Cauchy transform {along planes} in $\R^n$ with $n \geq 3$: for $f \in \mathcal{S}(\R^n)$ and $(x,Q)\in \R^n \times \SO(n)$,}
\[
\mathcal{C}  f(x,Q) =\frac{1}{(2\pi)^{\frac{n}{2}}}\int_{\R^n} e^{ix\cdot \xi} \frac{\hat f(\xi)}{\xi \cdot Q(ie_1 - e_2)} \dd\xi.
\]

The second physical situation where one can consider this inverse problem corresponds to allowing advection terms modelled by time-dependent vector fields. Under these conditions, one has to consider the equation $\partial_t u -\Delta u + A \cdot \nabla u = 0$ in $D$ throughout the time interval $(0, T)$. For a full description of the problem, see \cite{caro2019determination}. For the resolution of this inverse problem, one {constructs} geometric optics
\[
u_\kappa (t, x) = e^{|\kappa|^2 t + \kappa \cdot x} (v_\kappa(t, x) + w_\kappa (t, x)), 
\] 
where $\kappa \in \R^n$. The \emph{amplitude} $v_\kappa$ and the \emph{correction 
term} $w_\kappa$ are constructed as solutions of the equations
\begin{equation}
\label{eq:pre_transport}
2 \kappa \cdot \nabla v_\kappa - \kappa \cdot A v_\kappa = 0
\end{equation}
and
\[
(\partial_t -\Delta - 2 \kappa \cdot \nabla + A \cdot \nabla + \kappa \cdot A) w_\kappa =  (- \partial_t + \Delta - A \cdot \nabla) v_\kappa.
\]
In order to solve \eqref{eq:pre_transport} we choose $\kappa = \tau Q e_1 $ with 
$Q \in \SO(n)$, and the amplitude in the form $v_\kappa = e^{\psi_\kappa}$ with 
$\psi_\kappa$ solving
\begin{equation}\label{eq:transport}
Q e_1 \cdot \nabla \psi_\kappa =  \frac{1}{2} Q e_1 \cdot A.
\end{equation}
In the case that the components of $A$ are {just} in 
$L^\infty ((0, T) \times D)$, one needs again to look for a rotational smoothing 
phenomenon. Note that equation \eqref{eq:transport} fits {the framework of 
Theorem~\ref{th:rotational_smoothing}} with $\alpha = k = 1$ and 
$q_\alpha (\eta) = i \eta $ for all $\eta \in \R$. Note that $\alpha - k = 0$ and 
it also satisfies the cancellation property 
$1/q_\alpha (-1) + 1/q_\alpha (1) = 0$.

We {expect} that the systematic analysis of rotational smoothing carried out in 
this article can be applied to the resolution of the previous and possibly other 
inverse problems in non-regular settings.

\subsection{Outline} The paper contains five more sections and an appendix. In 
\Cref{sec:precise_statement} we provide a more detailed formulation of 
\Cref{th:rotational_smoothing}. Additionally, we give a straightforward proof of 
the rotational smoothing phenomenon for the range $0 < \alpha < k/2$.   
\Cref{sec:extension}  contains the construction of the extension of the 
distribution determined by the function 
$ \eta \in \R^k \setminus \{ 0 \} \mapsto 1/q_\alpha(\eta) $.
In \Cref{sec:rotational_smoothing} we prove 
\Cref{th:rotational_smoothing} assuming some technical points that will be 
postponed. \Cref{sec:counterexamples} is devoted to prove Theorems 
\ref{th:loss_of_derivatives} and \ref{th:cancellation_counterexample}.
In \Cref{sec:abstract_framework} we provide the necessary analytical framework to 
cover the gaps assumed along the proof of \Cref{th:rotational_smoothing}. 
Lastly, in {\Cref{app:orthogonal_group}} we 
collect and prove some key properties about $\SO(n)$.

\begin{acknowledgements}
P.C. is supported by the grants BERC 2018-2021, Severo Ochoa SEV-2017-0718 and 
PGC2018-094528-B-I00, and the Ikerbasque foundation. He wants to thank D. Faraco 
and K. Rogers for some fruitful discussions on the topic some years ago. C.J.M. 
is supported by MTM2017-85934-C3-3-P. I.P. is supported by Ikerbasque, PGC2018-094528-B-I00 {(AEI/FEDER, UE)} and the Basque Government grant T1247-19. 
\end{acknowledgements}

\section{More precise statements of rotational smoothing}\label{sec:precise_statement}
In this section we restate \Cref{th:rotational_smoothing} providing an explicit 
definition of the solution operator $S_\alpha$ and computing the constant $C$. 
The statement is reformulated in two theorems, one corresponding the to the range $0 < \alpha < k$ and the other for $\alpha \geq k$.

Let $q_\alpha$ satisfy \eqref{it:ellipticity} 
and \eqref{it:homogeneity} with $0<\alpha<k$ and write
$p_\alpha (\xi, Q) = q_\alpha(Q e_1 \cdot \xi, \dots, Q e_k \cdot \xi)$ for all 
$(\xi, Q) \in \R^n \times \SO(n)$.
We define the solution map $S_\alpha$ for 
$f \in \mathcal{S}(\R^n)$ by 
\begin{equation}
S_\alpha f  (x, Q) =  \frac{1}{(2 \pi)^{n/2}} \int_{\R^n} e^{\i x \cdot \xi} \, \frac{\widehat{f}(\xi)}{p_\alpha (\xi, Q)} \, \dd \xi, \qquad (x, Q) \in \R^n \times \SO(n).
\label{id:solution_map_multiplier}
\end{equation}
This definition is possible because
the function 
$\xi\in \R^n \setminus \Sigma_Q \mapsto 1/p_\alpha(\xi, Q) $ with $\Sigma_Q = \{  \xi \in \R^n : Q e_1 \cdot \xi = \dots = Q e_k \cdot \xi = 0 \}$ 
can be extended to a locally-integrable function in $\R^n$ with temperate growth.
Let $r_\alpha$ denote the restriction to $\SS^{n - 1}$ of the function
\begin{equation}
\label{map:qalpha-1_function}
(\xi^\prime, \xi^{\prime\prime}) \in \R^k \times \R^{n - k} \longmapsto \left\{
\begin{aligned}
& 1/q_\alpha  (\xi^\prime) & & \textnormal{if } \xi^\prime \neq 0,\\
& 0 & & \textnormal{if } \xi^\prime = 0.
\end{aligned}
\right.
\end{equation}
Thus, the rotational smoothing phenomenon can be formulated as follows.
\begin{theorem} \label{th:0alphak} \sl 
Assume $n \in \N$ with $n \geq 2$. Let $k \in \{1, \dots, n - 1\}$ and 
$\alpha > 0$ be given. Consider $q_\alpha \in C(\R^k)$ satisfying 
\eqref{it:ellipticity} and \eqref{it:homogeneity} with $\alpha < k$.
Then, for all $f \in \mathcal{S}(\R^n)$ we have that
\begin{equation*}
\| S_\alpha f  \|_{H^{-\delta} (\SO(n); \dot{H}^\alpha(\R^n))} = S(\SS^{n - 1})^{-1/2} \| r_\alpha \|_{H^{-\delta}(\SS^{n - 1}) } \| f\|_{L^2(\R^n)}
\end{equation*}
with $\delta = 0$ if $\alpha < k/2$ and $\delta > \alpha - k/2$ if $ k/2 \leq \alpha < k$. Here $S(\SS^{n - 1})$ stands for the measure of the unit sphere.
\end{theorem}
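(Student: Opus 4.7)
The plan is to compute both sides of the asserted identity explicitly via Plancherel in $\R^n$, then use the homogeneity \eqref{it:homogeneity} to pull the $|\xi|^{2\alpha}$ weight into an angular factor, and finally invoke a representation-theoretic identity that transfers negative Sobolev norms on $\SO(n)$ to negative Sobolev norms on the unit sphere.

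First, since the $\dot{H}^\alpha(\R^n)$ norm is realized by Plancherel as the $L^2$ norm of $\xi \mapsto |\xi|^\alpha \widehat{\cdot}(\xi)$, and since the $H^{-\delta}(\SO(n);\dot{H}^\alpha(\R^n))$ norm (spectrally defined through the Laplace–Beltrami operator on $\SO(n)$) exchanges with the $L^2_\xi$ norm by Fubini, I obtain
\[
\| S_\alpha f \|_{H^{-\delta}(\SO(n);\dot{H}^\alpha(\R^n))}^2 = \int_{\R^n} |\widehat{f}(\xi)|^2 \, \| Q \mapsto |\xi|^\alpha / p_\alpha(\xi,Q) \|_{H^{-\delta}(\SO(n))}^2 \, \dd\xi.
\]
The homogeneity \eqref{it:homogeneity} yields $p_\alpha(\xi,Q) = |\xi|^\alpha p_\alpha(\xi/|\xi|,Q)$; and writing $\theta = \xi/|\xi|$, $\sigma = Q^\T\theta \in \SS^{n-1}$, the first $k$ coordinates of $\sigma$ are precisely $Qe_i \cdot \theta$, whence $p_\alpha(\theta,Q) = q_\alpha(\sigma_1,\dots,\sigma_k)$ and $|\xi|^\alpha / p_\alpha(\xi,Q) = r_\alpha(Q^\T\theta)$ with $r_\alpha$ as in \eqref{map:qalpha-1_function}. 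The integrand inside then becomes the squared $H^{-\delta}(\SO(n))$ norm of the pullback $Q \mapsto r_\alpha(Q^\T \theta)$.

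The crux is then the key identity
\[
\| Q \mapsto r_\alpha(Q^\T\theta) \|_{H^{-\delta}(\SO(n))}^2 = S(\SS^{n - 1})^{-1} \| r_\alpha \|_{H^{-\delta}(\SS^{n-1})}^2,
\]
valid for every $\theta \in \SS^{n-1}$. I would derive this from the fact that $\SS^{n-1} \cong \SO(n)/\SO(n-1)$ as a homogeneous space: the pullback of a function on $\SS^{n-1}$ along $Q \mapsto Q^\T\theta_0$ (for any fixed $\theta_0$) is the unique lift to $\SO(n)$ that is right-invariant under $\mathrm{Stab}(\theta_0)$, and by Peter–Weyl the restriction of the $\SO(n)$-Laplacian to this subspace is a multiple of the sphere Laplacian. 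Right-invariance of the Haar measure handles the independence from $\theta$. This kind of structural statement is precisely what the appendix \Cref{app:orthogonal_group} is set up to provide, so I will invoke it there.

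The main obstacle is justifying this identity when $\alpha \geq k/2$, since $r_\alpha$ is then not in $L^2(\SS^{n-1})$ but only a distribution with singularity of order $\alpha$ along the codimension-$k$ submanifold $\{\sigma_1 = \dots = \sigma_k = 0\}$. Using polar coordinates transverse to this submanifold (so that $|\sigma'|$ behaves like distance and the measure factors as $|\sigma'|^{k-1}\dd|\sigma'|$ against angular measures), one checks that $r_\alpha \in H^{-\delta}(\SS^{n-1})$ precisely when the transverse singularity $|\sigma'|^{-\alpha}$ has distributional Sobolev regularity $-\delta$, giving the threshold $\delta > \alpha - k/2$. Thus for $\alpha < k/2$ the choice $\delta = 0$ makes $r_\alpha$ square-integrable, while for $k/2 \leq \alpha < k$ one must take $\delta > \alpha - k/2$ to give meaning to both sides of the key identity and to the pullback as an element of $H^{-\delta}(\SO(n))$. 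Once this is in place, combining the displays above and using $\|f\|_{L^2} = \|\widehat f\|_{L^2}$ yields the claimed equality.
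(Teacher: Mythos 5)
Your roadmap — Plancherel in $\R^n$, homogeneity of $q_\alpha$ to reduce the multiplier to $r_\alpha(Q^\T\theta)$, and the identity $\| r_\alpha \circ F_\theta \|_{H^{-\delta}(\SO(n))} = S(\SS^{n-1})^{-1/2}\|r_\alpha\|_{H^{-\delta}(\SS^{n-1})}$ — matches the paper exactly, and \Cref{prop:SOn_Sn-1_Sobolev} is indeed the engine. Your heuristic for that proposition via $\SS^{n-1}\cong\SO(n)/\SO(n-1)$ and Peter--Weyl is essentially right; the paper cites an abstract homogeneous-space theorem to this effect but then gives a self-contained computational proof of the commutation $\Delta_{\SO(n)}[f\circ F] = (\Delta_{\SS^{n-1}}f)\circ F$ in \Cref{lem:SOn_Sn-1_LB} (the bi-invariant metric normalization $g(X,Y)=\tfrac12\tr(X^\T Y)$ makes the constant $1$, not merely a multiple). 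Your sharp-exponent count for $r_\alpha\in H^{-\delta}(\SS^{n-1})$ via transverse polar coordinates is also correct.

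The gap is in the very first display. For $k/2\le\alpha<k$, the paper itself shows that $\|S_\alpha f(\centerdot,Q)\|_{\dot H^\alpha(\R^n)}=\infty$ for every $Q$ (the Riesz potential discussion before \Cref{th:loss_of_derivatives}), so ``take the $\dot H^\alpha$ norm, then the $H^{-\delta}(\SO(n))$ norm, then swap by Fubini'' is not a sequence of operations you can perform: the intermediate object is identically $+\infty$. Equally, $Q\mapsto|\xi|^\alpha/p_\alpha(\xi,Q)$ is a distribution on $\SO(n)$ in this range, not a function whose $H^{-\delta}$ norm is computed by an integral, so even the right-hand side of your display requires an interpretation. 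The norm-exchange identity you invoke is precisely \Cref{lem:new_norm}, but that lemma is proved only for $u\in\mathcal A(\R^n\times M)$, i.e.\ smooth rapidly-decaying joint functions, and $S_\alpha f$ is not one of those when $\alpha\ge k/2$. The paper therefore mollifies: it defines $q_{\alpha,N}^{-1}=q_\alpha^{-1}\star\psi_N$, forms the smooth operators $S_{\alpha,N}f\in\mathcal A(\R^n\times\SO(n))$ to which \Cref{lem:new_norm} legitimately applies, proves the three claims \eqref{lim:vanishing_Cauchy}--\eqref{in:r_alpha_convergence} (a Cauchy estimate, a uniform domination, and convergence of the sphere norms) by controlling the singularity in local charts near $\{\theta_1=\dots=\theta_k=0\}$, identifies the Banach-completion limit with $S_\alpha f$ via convergence in $\mathcal D'(\R^n\times\SO(n))$, and only then passes to the limit in the norm formula. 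Without some such approximation scheme your first equality is an assertion, not a step; filling it in is what most of the paper's \Cref{sec:rotational_smoothing} is doing.
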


\Cref{th:0alphak} be proved in \Cref{sec:proof-mainTH}.
This result is a more explicit formulation of \Cref{th:rotational_smoothing} for
the case $0 < \alpha < k$. From this formulation we see that the restriction on 
$\delta$ comes from the need of having $r_\alpha \in H^{-\delta} (\SS^{n - 1})$.
Notice also that this identity  shows explicitly that the $-\delta$ derivatives 
affect only $r_\alpha$, and hence just the operator, as we mentioned in the 
introduction.

Before proceeding with the task of providing a more explicit formulation of 
\Cref{th:rotational_smoothing} for the range $\alpha \geq k$, we will include 
here a straightforward proof of \Cref{th:0alphak} in the special case where 
$\alpha < k/2$.
That is, we prove that if $\alpha < k/2$ then
\begin{equation}
\| S_\alpha  f \|_{L^2(\SO(n);\dot{H}^\alpha (\R^n))} = S(\SS^{n - 1})^{-1/2} \| r_\alpha \|_{L^2(\SS^{n - 1})} \|f\|_{L^2(\R^n)}
\label{id:easy_sec}
\end{equation}
for all $f \in \mathcal{S}(\R^n)$.  
Despite the fact that the proof of the case $\alpha < k/2$ 
will be contained in the 
argument presented in the subsequent sections, the reader can use this more 
immediate proof as a roadmap for the general case.

\begin{proof}[Proof of the identity \eqref{id:easy_sec}.]
Using Plancherel's identity, together with the identities \eqref{id:solution_map_multiplier}, and switching the order of integration we have that
\[
\int_{\SO(n)} \| S_\alpha  f(\centerdot,Q) \|^2_{\dot{H}^\alpha (\R^n)} \, \dd \mu(Q) = \int_{\R^n} \|m_\alpha (\xi,\centerdot)\|^2_{L^2(\SO(n))} |\widehat{f}(\xi)|^2 \, \dd \xi,
\]
where
\begin{equation}
m_\alpha (\xi,Q) = \frac{|\xi|^\alpha}{p_\alpha(\xi, Q)}\qquad  (\xi,Q)\in \R^n \times \SO(n).
\label{id:mAlphaK}
\end{equation}
Let $M \in \SO(n)$ be such that $\xi = |\xi| M e_1 $; then
\[
m_\alpha (\xi,Q) = \frac{|\xi|^\alpha}{q_\alpha(|\xi| Q e_1 \cdot M e_1, \dots, |\xi| Q e_k \cdot M e_1)} = r_\alpha(Q^\T M e_1).
\]
In the last identity we have used the homogeneity of $q_\alpha$ given in \eqref{it:homogeneity} and the fact that at this point $r_\alpha$ denotes the restriction of \eqref{map:qalpha-1_function} to $\SS^{n - 1}$.
Therefore,
\[
\begin{split}
\| S_\alpha  f \|_{L^2(\SO(n);\dot{H}^\alpha (\R^n))} = \int_{\R^n} \bigg(\int_{\SO(n)} |r_\alpha (Q^\T M e_1)|^2 \dd \mu(Q) \bigg)  |\widehat{f}(\xi)|^2 \, \dd \xi.
\end{split}
\]
Since $Q^\T M = (M^\T Q)^\T$ and the Haar measure is left-invariant and invariant under transposition, see \Cref{prop:translation_transposed_invariance}, we have again by Plancherel's identity that
\[
\| S_\alpha  f \|_{L^2(\SO(n);\dot{H}^\alpha (\R^n))} = \bigg( \int_{\SO(n)} |r_\alpha (Q e_1)|^2 \dd \mu(Q) \bigg)^{1/2} \|f\|_{L^2(\R^n)}.
\]
Finally, by a very well-known relation between the Haar measure $\mu$ and the hypersurface measure on $\SS^{n - 1}$, see \Cref{prop:SOn_Sn-1}, we have that
\begin{equation} \label{id:int_haar}
 \bigg( \int_{\SO(n)} |r_\alpha (Q e_1)|^2 \dd \mu(Q) \bigg)^{1/2} = S(\SS^{n - 1})^{-1/2} \| r_\alpha \|_{L^2(\SS^{n - 1})}. 
 \end{equation}
This proves the identity \eqref{id:easy_sec} for $ \alpha < k/2$. The fact that 
$r_\alpha \in L^2(\SS^{n-1})$ when $ \alpha < k/2$ is derived from the discussion 
in \Cref{sec:ralph_S}.
\end{proof}

We continue with the task of providing a more explicit formulation of 
\Cref{th:rotational_smoothing} for the range $\alpha \geq k$. Let $q_\alpha$ 
satisfy \eqref{it:ellipticity}, \eqref{it:homogeneity} and 
\eqref{it:cancellation}. We pay attention only to the case $\alpha \geq k$, but 
note that the more general formulation presented in the following lines coincides 
with the previous one for the case that $\alpha < k$. In \Cref{sec:extension} we 
will see that there is an explicitly defined distribution 
$q_\alpha^{-1} \in \mathcal{S}^\prime (\R^k)$ that extends
the distribution
\begin{equation}
\label{map:ditributionRn-0}
 \phi \in \mathcal{D} (\R^k \setminus \{ 0 \}) \longmapsto \int_{\R^k \setminus \{ 0 \}} \frac{\phi(\eta)}{q_\alpha(\eta)} \, \dd \eta.
\end{equation}
With the tempered distribution $q_\alpha^{-1}$ at hand, we first introduce
$q_\alpha^{-1} \otimes \mathbf{1}_{\R^{n - k}}$ in $\R^n$ as the tensor product 
of  $q_\alpha^{-1} \in\mathcal{S}^\prime(\R^k)$ and the distribution 
corresponding to the function $\mathbf{1}_{\R^{n - k}}(\kappa) = 1$ for all 
$\kappa \in \R^{n - k}$. Note that this latter tempered distribution in $\R^n$ 
generalizes \eqref{map:qalpha-1_function} for $\alpha \geq k$.
In the same spirit as for the case 
$\alpha < k$, the distribution 
$q_\alpha^{-1} \otimes \mathbf{1}_{\R^{n - k}}$
will determine another distribution 
$r_\alpha$ in $\SS^{n - 1}$
such that $r_\alpha \in H^{-\delta} (\SS^{n - 1})$
with $\delta = \lceil \alpha - k \rceil$ if 
$\alpha - k/ 2 < \lceil \alpha - k \rceil$ and $\delta > \alpha - k/2$ 
if $\alpha - k/ 2 \geq \lceil \alpha - k \rceil$.
For convenience recall here that 
$$\lceil \alpha - k \rceil = \min \{ l \in \N_0 : \alpha - k < l \}.$$
Additionally, recall that in 
the range $\alpha < k$ the function $r_\alpha$ was chosen precisely as
the restriction of \eqref{map:qalpha-1_function} to $\SS^{n - 1}$.
By analogy, it is convenient to 
visualize $r_\alpha$ as a formal restriction of 
$q_\alpha^{-1} \otimes \mathbf{1}_{\R^{n - k}}$ to $\SS^{n - 1}$. 
Then, we consider the family of tempered distributions 
$\{ p_\alpha^{-1} (\centerdot, Q) \in \mathcal{S}^\prime (\R^n) :\, Q \in \SO(n) \}$ 
defined as
\[
p_\alpha^{-1} (\centerdot, Q) = [Q^\T]^\ast (q_\alpha^{-1} \otimes \mathbf{1}_{\R^{n - k}}), 
\]
and the solution operator defined for $f \in \mathcal{S}(\R^n)$ as
\begin{equation}
S_\alpha f (x, Q) = \frac{1}{(2 \pi)^{n/2}} \langle p_\alpha^{-1} (\centerdot, Q) ,\widehat{f (\centerdot + x)} \rangle, \quad (x, Q) \in \R^n \times \SO(n).
\label{id:solution_operator}
\end{equation}
Here the duality pairing {represents the action of the} tempered distribution $p_\alpha^{-1} (\centerdot, Q)$ on the Fourier transform of the function $y \in \R^n \mapsto f(y + x)$.

\begin{theorem}\label{th:kalpha}\sl
Assume $n \in \N$ with $n \geq 2$. Let $k \in \{1, \dots, n - 1\}$ and 
$\alpha \geq k$ be given, and consider $q_\alpha \in C(\R^k)$ satisfying  
\eqref{it:ellipticity}, \eqref{it:homogeneity} and \eqref{it:cancellation}. 
Define the solution operator $S_\alpha$ as in \eqref{id:solution_operator}. 
{Then, there exists a distribution  $r_\alpha \in H^{-\delta}(\SS^{n-1})$   
determined by $q_\alpha$ such that,} for all $f \in \mathcal{S}(\R^n)$ we have 
that
\[ 
\| S_\alpha f \|_{H^{-\delta} (\SO(n); \dot{H}^\alpha(\R^n))} = S(\SS^{n - 1})^{-1/2} \| r_\alpha \|_{H^{-\delta}(\SS^{n - 1}) } \| f\|_{L^2(\R^n)}
\]
with $\delta = \lceil \alpha - k \rceil$ if $\alpha - k/ 2 < \lceil \alpha - k \rceil$ and $\delta > \alpha - k/2$ if $\alpha - k/ 2 \geq \lceil \alpha - k \rceil$. Here $S(\SS^{n - 1})$ stands for the measure of the unit sphere.
\end{theorem}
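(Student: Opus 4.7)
The plan is to mirror the roadmap presented in the proof of identity \eqref{id:easy_sec}, replacing the pointwise multiplier $1/p_\alpha(\xi,Q)$ with the tempered-distribution multiplier $p_\alpha^{-1}(\cdot,Q) = [Q^\T]^\ast(q_\alpha^{-1} \otimes \mathbf{1}_{\R^{n-k}})$ whose construction is postponed to \Cref{sec:extension}. Fix $f \in \mathcal{S}(\R^n)$. Unwinding \eqref{id:solution_operator} gives $\widehat{S_\alpha f(\centerdot,Q)} = (2\pi)^{-n/2}\,p_\alpha^{-1}(\centerdot,Q)\,\widehat{f}$ as an element of $\mathcal{S}^\prime(\R^n)$ for each $Q \in \SO(n)$. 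A Plancherel-type identity, performed in the abstract framework of \Cref{sec:abstract_framework} and written in swapped order of integration, should then yield
\[
\| S_\alpha f \|_{H^{-\delta}(\SO(n);\dot{H}^\alpha(\R^n))}^2 = \int_{\R^n} \| m_\alpha(\xi,\centerdot) \|_{H^{-\delta}(\SO(n))}^2 \,|\widehat{f}(\xi)|^2 \, \dd \xi,
\]
where $m_\alpha(\xi,Q) = |\xi|^\alpha\,p_\alpha^{-1}(\xi,Q)$ is interpreted, for each $\xi \neq 0$, as a distribution in $Q \in \SO(n)$.

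The next step is to identify $m_\alpha(\xi,\centerdot)$ as a left-translate of the formal restriction $r_\alpha$ of $q_\alpha^{-1} \otimes \mathbf{1}_{\R^{n-k}}$ to the unit sphere. Concretely, for each $\xi\neq 0$ pick $M_\xi \in \SO(n)$ with $M_\xi e_1 = \xi/|\xi|$. Using the pull-back identity $[Q^\T]^\ast (q_\alpha^{-1}\otimes \mathbf{1}_{\R^{n-k}})(\xi)$ and the homogeneity property \eqref{it:homogeneity} of $q_\alpha^{-1}$ established in \Cref{sec:extension} (this is where the cancellation hypothesis \eqref{it:cancellation} is used in the critical range $\alpha - k \in \N_0$ to make the extension genuinely homogeneous), one obtains the distributional identity $m_\alpha(\xi,Q) = r_\alpha(Q^\T M_\xi e_1)$, exactly as in the pointwise computation preceding \eqref{id:mAlphaK}. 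Since $Q^\T M_\xi = (M_\xi^\T Q)^\T$, the left- and transposition-invariance of the Haar measure (\Cref{prop:translation_transposed_invariance}) implies
\[
\| m_\alpha(\xi,\centerdot) \|_{H^{-\delta}(\SO(n))} = \| Q \mapsto r_\alpha(Q e_1) \|_{H^{-\delta}(\SO(n))},
\]
a quantity independent of $\xi$. Pulling it out of the $\xi$-integral and applying Plancherel a second time leaves a factor of $\| f \|_{L^2(\R^n)}$.

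The proof is then completed by the analog of \eqref{id:int_haar} at Sobolev level $-\delta$: the surjection $Q \in \SO(n) \mapsto Q e_1 \in \SS^{n-1}$ intertwines the Haar measure with the normalized spherical measure (\Cref{prop:SOn_Sn-1}), so that the pullback map $\phi \mapsto (Q \mapsto \phi(Qe_1))$ is, up to the factor $S(\SS^{n-1})^{-1/2}$, an isometric embedding of $H^{-\delta}(\SS^{n-1})$ into $H^{-\delta}(\SO(n))$. Combined with the restriction $\delta \geq \lceil \alpha - k \rceil$ (resp.\ $\delta > \alpha - k/2$) guaranteeing that $r_\alpha$ lies in $H^{-\delta}(\SS^{n-1})$, this yields the claimed identity with the prescribed constant $S(\SS^{n-1})^{-1/2} \| r_\alpha \|_{H^{-\delta}(\SS^{n-1})}$.

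The principal obstacle is twofold and is precisely what is deferred to Sections \ref{sec:extension} and \ref{sec:abstract_framework}. First, one must construct the extension $q_\alpha^{-1}\in\mathcal{S}^\prime(\R^k)$ of the distribution in \eqref{map:ditributionRn-0} so that homogeneity is preserved; this forces the condition \eqref{it:cancellation} at the integer values $\alpha - k \in \N_0$, and it is what ultimately determines the admissible $\delta$. Second, one must give rigorous meaning to the Plancherel-type identity displayed above, and in particular define the formal restriction $r_\alpha$ as a bona fide element of $H^{-\delta}(\SS^{n-1})$ and the pullback $Q \mapsto r_\alpha(Qe_1)$ as a bona fide element of $H^{-\delta}(\SO(n))$, since for $\alpha \geq k$ the objects $m_\alpha(\xi,\centerdot)$ and $r_\alpha$ are genuine distributions rather than functions. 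Once these two technical ingredients are in place the computation above is essentially the verbatim distributional analogue of the proof of \eqref{id:easy_sec}.
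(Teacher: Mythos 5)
Your high-level plan is aligned with the paper's: a Plancherel-type identity in the mixed norm, a reduction to a norm on $\SS^{n-1}$ via the Sobolev-level Haar/sphere relation of \Cref{prop:SOn_Sn-1_Sobolev}, and the homogeneity of $q_\alpha^{-1}$ (with \eqref{it:cancellation} forcing it in the integer range). You also correctly flag the two hard points. However, there is a genuine gap in how you propose to close the second of them, and it is not a cosmetic one.

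The Plancherel identity you display cannot be asserted directly for $S_\alpha f$. The identity \eqref{id:norm_different-look} is proved in \Cref{sec:abstract_framework} only for $u \in \mathcal{A}(\R^n \times M)$, i.e.\ for smooth, rapidly decaying functions, and for $\alpha \geq k$ the function $S_\alpha f$ is not of this type: its symbol $p_\alpha^{-1}(\centerdot,Q)$ has singular support on $\Sigma_Q$, so the expression $m_\alpha(\xi,Q)=|\xi|^\alpha p_\alpha^{-1}(\xi,Q)$ has no pointwise meaning in $\xi$, and the integrand $\|m_\alpha(\xi,\centerdot)\|_{H^{-\delta}(\SO(n))}^2$ in your displayed identity is undefined as written. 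The paper's device is a regularization that your proposal omits: mollify $q_\alpha^{-1}$ to $q_{\alpha,N}^{-1}=q_\alpha^{-1}\star\psi_N$, define the smooth operators $S_{\alpha,N}$ in \eqref{id:Salpha} to which the framework of \Cref{sec:abstract_framework} does apply, prove that $\{S_{\alpha,N}f\}$ is Cauchy in $H^{-\delta}(\SO(n);\dot{H}^\alpha(\R^n))$, and identify the abstract Cauchy limit with the distribution $S_\alpha f$ by a separate argument in $\mathcal{D}'(\R^n\times\SO(n))$. The Cauchy property and the passage to the limit under the $\xi$-integral rest on the three claims \eqref{lim:vanishing_Cauchy}, \eqref{in:domination_sequence}, \eqref{in:r_alpha_convergence} of \Cref{sec:proof_I_II_III}, whose proof is the real work of the theorem: one uses the scaling identity \eqref{id:smooth_homogeneity}, the $2n$-chart atlas of $\SS^{n-1}$, and the distinction between the charts where $q_\alpha^{-1}\otimes\mathbf{1}_{\R^{n-k}}$ is locally bounded and those where one only has $q_\alpha^{-1}\in H^{-\delta}_{\rm loc}(\R^k)$. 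In particular, $r_\alpha$ is not introduced by formally restricting a distribution to the sphere; it is constructed as the $H^{-\delta}(\SS^{n-1})$-limit of the smooth restrictions $q_{\alpha,N}^{-1}\otimes\mathbf{1}_{\R^{n-k}}|_{\SS^{n-1}}$ (claim \eqref{in:r_alpha_convergence}), and it is this construction that both defines $r_\alpha$ and justifies the limiting Plancherel identity. Your proposal treats all of this as routine bookkeeping ``once the extension is in place,'' which understates the argument: without the regularization and claims \eqref{lim:vanishing_Cauchy}--\eqref{in:r_alpha_convergence} the computation you outline never gets off the ground.
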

We give an explicit formula for $r_\alpha$ in \Cref{prop:r_formula} for a 
specific choice of local coordinates on the sphere $\SS^{n - 1}$. 
\Cref{th:kalpha} be proved in \Cref{sec:proof-mainTH}.

\section{The tempered distribution \texorpdfstring{$q_\alpha^{-1}$}{qalpha-1}}
\label{sec:extension}
Here we discuss the extension of the distributions
\begin{equation}
\phi \in \mathcal{D} (\R^k \setminus \{ 0 \}) \longmapsto \int_{\R^k \setminus \{ 0 \}} \frac{\phi(\eta)}{q_\alpha(\eta)} \, \dd \eta
\label{map:distribution}
\end{equation}
that we use to define the solution operator $S_\alpha$ of 
\Cref{th:kalpha} as explained in \Cref{sec:precise_statement}.
Recall that $\mathcal{D}(\R^k\setminus\{0\})$ denotes the space of smooth 
functions in $\R^k \setminus \{ 0 \}$ with compact support.
As mentioned earlier, the extension of \eqref{map:distribution} will be denoted 
by $q_\alpha^{-1}$, it will 
belong to $\mathcal{S}^\prime (\R^k)$ {and it will have the same homogeneity 
property as \eqref{map:distribution}}.  
More precisely, we will
{construct the distribution 
$q_\alpha^{-1}\in \mathcal{S}^\prime(\R^k)$ satisfying:}
{\begin{enumerate}[label= {\rm (\roman*)} , ref=\roman*]
\item \label{item:ext} For every $\phi \in \mathcal{S}(\R^k)$ such that 
$\supp \phi \subset \R^k \setminus \{ 0 \}$ we have that
\[
\langle q_\alpha^{-1}, \phi \rangle = \int_{\R^k \setminus \{ 0\}} \frac{\phi(\eta)}{q_\alpha(\eta)} \, \dd \eta.
\]
\item \label{item:homogen} For every $\phi \in \mathcal{S}(\R^k)$ and $\lambda> 0$ we have that
\[
\langle q_\alpha^{-1}, \phi \rangle = \lambda^{-\alpha} \langle q_\alpha^{-1}, \phi_\lambda \rangle, 
\]
where $\phi_\lambda(\eta) = \lambda^k \phi(\lambda \eta)$ for $\eta \in \R^k$.
\end{enumerate}}

We establish the extension of the distribution in \eqref{map:distribution} in two 
different lemmas. First when $\alpha - k \notin \N_0$ and then for 
$\alpha - k \in \N_0$. {As we will see below, separating these cases is 
convenient since for $\alpha - k \notin \N_0$  there is a unique extension 
satisfying \eqref{item:homogen}. On the other hand, for $\alpha - k \in \N_0$  
there are no extensions satisfying \eqref{item:homogen} unless  the cancellation 
condition \eqref{it:cancellation} holds. Also, in the latter case the homogeneity 
does not uniquely  determine  the extension.} In order to state the extension 
lemmas we recall once again that
\[
\lceil \alpha - k \rceil = \min \{ l \in \N_0 : \alpha - k < l \}, 
\]
and introduce
\[
\lfloor \alpha - k \rfloor = \max \{ l \in \N_0 : l \leq \alpha - k \}. 
\]

\begin{lemma}\sl \label{lem:non_pos_integers} Let 
$k \in \N$ and $\alpha > 0$ be given, and consider $q_\alpha \in C(\R^k)$ satisfying \eqref{it:ellipticity} and \eqref{it:homogeneity}. If $\alpha - k \notin \N_0$, then the linear functional
\[
 \phi \in \mathcal{S}(\R^k) \longmapsto  
\begin{cases}\displaystyle
 \int_{\R^k} \frac{\phi(\eta)}{q_\alpha(\eta)} \, \dd \eta, & \alpha - k < 0,\vspace{1.2em}  \\ \displaystyle
 \frac{\Gamma(\alpha - k - \lfloor \alpha - k \rfloor)}{\Gamma(\alpha - k + 1)} \int_{\R^k} \frac{(\eta \cdot \nabla)^{\lceil \alpha - k \rceil} \phi(\eta)}{q_\alpha(\eta)} \, \dd \eta,  & \alpha - k > 0,
\end{cases}
\]
denoted by $q_\alpha^{-1}$, belongs to $\mathcal{S}^\prime(\R^k)$  and it is the only tempered distribution 
in $\R^k$ satisfying the properties \eqref{item:ext} and \eqref{item:homogen}.
Here $\Gamma$ denotes the Gamma function.
\end{lemma}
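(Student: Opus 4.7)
My plan is to split the argument by the sign of $\alpha-k$; the common ingredients are the two-sided bound $|q_\alpha(\eta)|\eqsim|\eta|^\alpha$ (obtained from \eqref{it:ellipticity} and continuity on the compact sphere $\SS^{k-1}$ combined with \eqref{it:homogeneity}) and Euler's identity $\eta\cdot\nabla q_\alpha=\alpha q_\alpha$, which is an immediate consequence of \eqref{it:homogeneity}.

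For $\alpha-k<0$ the argument is essentially an observation: the estimate $|q_\alpha|\eqsim|\eta|^\alpha$ yields $1/q_\alpha\in L^1_{\mathrm{loc}}(\R^k)$ with only polynomial growth at infinity, so the map $\phi\mapsto\int\phi/q_\alpha\,\dd\eta$ is continuous on $\mathcal{S}(\R^k)$ and defines a tempered distribution. Property \eqref{item:ext} is tautological, while \eqref{item:homogen} follows from the change of variables $\eta\mapsto\lambda\eta$ together with $q_\alpha(\lambda\eta)=\lambda^\alpha q_\alpha(\eta)$.

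For $\alpha-k>0$ non-integer the direct integral diverges at the origin, and I would proceed by regularization using the Euler operator $E=\eta\cdot\nabla$. The backbone is the integration by parts identity
\[
\int_{\R^k}\frac{E\phi}{q_\alpha}\,\dd\eta = (\alpha-k)\int_{\R^k}\frac{\phi}{q_\alpha}\,\dd\eta,\qquad\phi\in\mathcal{D}(\R^k\setminus\{0\}),
\]
which follows from $\nabla\cdot(\eta/q_\alpha)=(k-\alpha)/q_\alpha$ via Euler's identity. Iterating this identity $N=\lceil\alpha-k\rceil$ times, with the appropriate chain of shifted coefficients accounting for the Gamma ratio in the statement, establishes the formula of the lemma on test functions supported away from the origin, hence property \eqref{item:ext}. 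The main obstacle is then continuity on the full Schwartz class: one must show that the regularized integrand is locally integrable at the origin, which reduces to analyzing the order of vanishing at $0$ of the iterated Euler operator applied to $\phi$, exploiting that $E$ acts on a homogeneous polynomial of degree $d$ as multiplication by $d$, and then controlling the resulting integral by a Schwartz seminorm of $\phi$. Property \eqref{item:homogen} then follows by the dilation change of variables $\eta\mapsto\lambda\eta$, the homogeneity $q_\alpha(\lambda\eta)=\lambda^\alpha q_\alpha(\eta)$, and the commutation of $E$ with dilations.

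Finally, for uniqueness, the difference $T$ of any two tempered extensions satisfying \eqref{item:ext} agrees with the zero distribution on $\mathcal{D}(\R^k\setminus\{0\})$, so it is supported at $\{0\}$ and hence a finite linear combination $\sum_\beta c_\beta\partial^\beta\delta_0$. If both extensions satisfy \eqref{item:homogen}, then $T$ is homogeneous of degree $-\alpha$; since $\partial^\beta\delta_0$ is homogeneous of degree $-k-|\beta|$, this forces $|\beta|=\alpha-k$, which is incompatible with the hypothesis $\alpha-k\notin\N_0$. Therefore $T=0$, proving the asserted uniqueness.
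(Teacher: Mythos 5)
Your argument for the case $\alpha-k<0$ and your uniqueness argument are correct and match the paper's (the latter: a difference of two extensions is supported at $\{0\}$, hence a finite sum $\sum c_\beta\partial^\beta\delta_0$, and homogeneity of degree $-\alpha$ would force $\alpha=k+|\beta|$, contradicting $\alpha-k\notin\N_0$). However, there is a genuine gap in your treatment of the case $\alpha-k>0$: you misread the operator $(\eta\cdot\nabla)^{l}$ in the lemma as the $l$-fold iterate $E^{l}$ of the Euler operator $E=\eta\cdot\nabla$. It is not. The paper's proof goes through the identity $\psi_\theta^{(l)}(x)=(\theta\cdot\nabla)^{l}\phi(x\theta)$, so in polar coordinates $(\eta\cdot\nabla)^{l}\phi(\rho\theta)=\rho^{l}\psi_\theta^{(l)}(\rho)$; equivalently, $(\eta\cdot\nabla)^{l}$ denotes the ``frozen'' power $\sum_{|\beta|=l}\binom{l}{\beta}\eta^\beta\partial^\beta$, which, as an operator polynomial in $E$, equals $\prod_{j=0}^{l-1}(E-j)$ and \emph{not} $E^{l}$ for $l\ge 2$.

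This distinction is not cosmetic. By the very observation you invoke---that $E$ multiplies a degree-$d$ homogeneous Taylor term by $d$---$E^{l}$ multiplies the degree-1 terms by $1^{l}=1$, so $E^{l}\phi(\eta)=\eta\cdot\nabla\phi(0)+O(|\eta|^2)$ near the origin; it only vanishes to first order when $\nabla\phi(0)\neq 0$. For $\alpha-k\in(1,2)$ (i.e.\ $l=2$) the integral $\int_{\R^k}E^{2}\phi/q_\alpha\,\dd\eta$ then behaves like $\int_{B_1}|\eta|^{1-\alpha}\,\dd\eta$, which diverges; so the functional you propose is not defined on all of $\mathcal S(\R^k)$. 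By contrast $\prod_{j=0}^{l-1}(E-j)$ annihilates all homogeneous polynomials of degree $<l$, giving $(\eta\cdot\nabla)^{l}\phi=O(|\eta|^{l})$ and ensuring convergence since $l=\lceil\alpha-k\rceil>\alpha-k$. This same correction also fixes the constant: iterating $\int(E-j)\phi/q_\alpha\,\dd\eta=(\alpha-k-j)\int\phi/q_\alpha\,\dd\eta$ over $j=0,\dots,l-1$ produces the falling factorial $(\alpha-k)(\alpha-k-1)\cdots(\alpha-k-l+1)=\Gamma(\alpha-k+1)/\Gamma(\alpha-k-\lfloor\alpha-k\rfloor)$ appearing in the lemma, whereas your iteration of $E$ would only ever give $(\alpha-k)^{l}$. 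A secondary (fixable) issue: your derivation of $\nabla\cdot(\eta/q_\alpha)=(k-\alpha)/q_\alpha$ requires $q_\alpha$ to be differentiable, but the hypothesis only gives $q_\alpha\in C(\R^k)$; the integration-by-parts identity is still true, but should be justified via polar coordinates (as in the paper, through the one-dimensional distributions $\mathrm{x}_+^{a}$) rather than by a pointwise divergence computation on $q_\alpha$.
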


The extension of the distribution in \eqref{map:distribution} in the case $\alpha - k \in \N_0$ is the content of the following lemma.

\begin{lemma}\sl \label{lem:pos_integers} Let $k \in \N$ and $\alpha > 0$ be given  and consider $q_\alpha \in C(\R^k)$ satisfying 
\eqref{it:ellipticity} and \eqref{it:homogeneity}. If $\alpha - k \in \N_0$, then the linear functional
\[
\phi \in \mathcal{S}(\R^k) \longmapsto \frac{-1}{(\alpha - k)!} \int_{\R^k} \frac{\log |\eta|}{q_\alpha(\eta)} (\eta \cdot \nabla)^{\alpha - k + 1} \phi(\eta) \, \dd \eta,
\]
{denoted by $q_\alpha^{-1}$, belongs to $\mathcal{S}^\prime(\R^k)$ and satisfies \eqref{item:ext}. Moreover, $q_\alpha^{-1}$ satisfies the homogeneity property \eqref{item:homogen} if and only if $q_\alpha$ satisfies \eqref{it:cancellation}. }
\end{lemma}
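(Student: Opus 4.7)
I would verify the three assertions in sequence, working throughout in polar coordinates $\eta = r\theta$ so that $\eta\cdot\nabla$ becomes $r\partial_r$ and $(r\partial_r)^N r^j = j^N r^j$. Set $N = \alpha - k + 1$.

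For well-definedness, in polar coordinates the defining formula becomes
\[
\langle q_\alpha^{-1},\phi\rangle = \frac{-1}{(\alpha-k)!}\int_{\SS^{k-1}}\frac{dS(\theta)}{q_\alpha(\theta)}\int_0^\infty\log r\cdot r^{-N}(r\partial_r)^N\phi(r\theta)\,dr.
\]
The spherical integral is finite by \eqref{it:ellipticity}; the radial tail at $r = \infty$ is handled by Schwartz decay; and near $r = 0$, the $j = 0$ Taylor coefficient of $\phi(r\theta)$ is annihilated by the factor $j^N$, while the surviving terms generate integrals of $\log r\cdot r^{j-N}$ that are interpreted in the Hadamard finite-part sense. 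Combined with standard bounds in terms of Schwartz seminorms, this yields $q_\alpha^{-1}\in\mathcal{S}'(\R^k)$.

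For the extension property \eqref{item:ext}, I would restrict to $\phi\in\mathcal{D}(\R^k\setminus\{0\})$ and transfer all $N$ derivatives onto the kernel using the adjoint identity $(\eta\cdot\nabla)^* = -(k+\eta\cdot\nabla)$; the boundary terms vanish thanks to the support assumption. Induction with $\eta\cdot\nabla\log|\eta| = 1$ and $\eta\cdot\nabla(1/q_\alpha) = -\alpha/q_\alpha$ gives $(k+\eta\cdot\nabla)^N(\log|\eta|/q_\alpha) = (k-\alpha)^N\log|\eta|/q_\alpha + N(k-\alpha)^{N-1}/q_\alpha$, whose combinatorics combined with the prefactor $-1/(\alpha-k)!$ collapses to $\int\phi(\eta)/q_\alpha(\eta)\,d\eta$.

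For the homogeneity characterization, I compute $\langle q_\alpha^{-1},\phi_\lambda\rangle$ by the substitution $y = \lambda\eta$. Since $\eta\cdot\nabla$ commutes with dilations, $q_\alpha$ is homogeneous of degree $\alpha$, and the Jacobian $\lambda^{-k}$ cancels the $\lambda^k$ in $\phi_\lambda(\eta) = \lambda^k\phi(\lambda\eta)$, the only non-scale-invariant factor is $\log|\eta/\lambda| = \log|\eta| - \log\lambda$. Isolating the $-\log\lambda$ piece yields
\[
\langle q_\alpha^{-1},\phi_\lambda\rangle - \lambda^\alpha\langle q_\alpha^{-1},\phi\rangle = \frac{\lambda^\alpha\log\lambda}{(\alpha-k)!}\int_{\R^k}\frac{(\eta\cdot\nabla)^N\phi(\eta)}{q_\alpha(\eta)}\,d\eta,
\]
so homogeneity holds for every $\phi\in\mathcal{S}(\R^k)$ and every $\lambda>0$ if and only if the right-hand integral vanishes identically. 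Passing to polar coordinates and Taylor expanding $\phi(r\theta) = \sum_j r^j\Phi_j(\theta)$, only the monomial of order $j = N-1 = \alpha-k$ contributes a nontrivial radial integral (in the finite-part sense), and the remaining angular dependence collapses to $\int_{\SS^{k-1}}\theta^\beta/q_\alpha(\theta)\,dS(\theta)$ with $|\beta| = \alpha-k$; vanishing of all such integrals is precisely condition \eqref{it:cancellation}. The main obstacle is making the Hadamard finite-part interpretations rigorous and mutually consistent across the three steps, so that the integration-by-parts in the extension step produces no spurious boundary contribution and the homogeneity computation genuinely isolates the top-order angular integrals without residual obstructions from lower-degree Taylor terms.
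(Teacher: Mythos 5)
There is a genuine gap, and it stems from how you read the operator $(\eta\cdot\nabla)^{\alpha-k+1}$. You interpret it as the iterated first-order operator, writing radially $(r\partial_r)^N$ with $(r\partial_r)^N r^j=j^N r^j$, and correspondingly taking the adjoint to be $-(k+\eta\cdot\nabla)$ raised to the $N$-th power. The paper instead means the Taylor-expansion power $\sum_{|\beta|=N}\tfrac{N!}{\beta!}\,\eta^\beta\partial^\beta\phi$ (equivalently, radially $r^N\partial_r^N$): this is forced by the identity \eqref{id:radial_derivative}, $\psi_\theta^{(N)}(x)=(\theta\cdot\nabla)^N\phi(x\theta)$ with $\theta$ fixed, and by the derivation through $\mathrm{x}_+^{k-\alpha-1}$. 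Under your iterated reading the claimed collapse fails: carrying your own induction $(k+\eta\cdot\nabla)^N(\log|\eta|/q_\alpha)=(k-\alpha)^N\log|\eta|/q_\alpha+N(k-\alpha)^{N-1}/q_\alpha$ into the integration by parts for $\phi\in\mathcal{D}(\R^k\setminus\{0\})$ gives
\[
\langle q_\alpha^{-1},\phi\rangle=\frac{-(N-1)^N}{(N-1)!}\int\frac{\log|\eta|}{q_\alpha}\,\phi\,\dd\eta+\frac{N(N-1)^{N-1}}{(N-1)!}\int\frac{\phi}{q_\alpha}\,\dd\eta,
\]
which, as soon as $N=\alpha-k+1\geq 2$, neither kills the $\log|\eta|$ term (since $(k-\alpha)^N=(-(N-1))^N\neq 0$) nor returns the constant $1$ on the second integral. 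So the iterated interpretation cannot satisfy \eqref{item:ext} for any choice of normalizing constant, and in fact no finite-part device can rescue it: the discrepancy already occurs for test functions supported away from the origin, where the integral is absolutely convergent and no regularization is involved. The convergence issue you address with Hadamard finite parts near $r=0$ is likewise an artifact of the wrong reading; under the Taylor reading $(\eta\cdot\nabla)^N\phi=O(|\eta|^N)$ near the origin, the integrand is $O(|\eta|^{1-k}\log|\eta|)$ and the integral is absolutely convergent, with no finite part needed.

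For comparison, the paper does not attack the $\R^k$ formula directly. It first builds the one-variable distributions $\mathrm{x}_+^{k-\alpha-1}$ via \eqref{id:e-1} and \eqref{id:l-1}, defines $q_\alpha^{-1}$ by the spherical average \eqref{map:qalpha-1}, obtains \eqref{item:ext} immediately from the fact that $\mathrm{x}_+^{k-\alpha-1}$ extends \eqref{map:distribution_R-0}, and only then recognizes the explicit formula by converting to Cartesian coordinates through \eqref{id:radial_derivative} under the Taylor reading, where $\rho^{-N}(\rho^N\psi_\theta^{(N)}(\rho))=\psi_\theta^{(N)}(\rho)$ and integration by parts gives the clean radial identity $\int_0^\infty\log\rho\,\psi^{(N)}\,\dd\rho=-(N-1)!\int_0^\infty\rho^{-N}\psi\,\dd\rho$ for $\psi\in\mathcal{D}((0,\infty))$. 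Your homogeneity step (substitute $\eta\mapsto\lambda\eta$, isolate the $\log\lambda$ coefficient, reduce to the vanishing of $\int q_\alpha^{-1}(\eta\cdot\nabla)^N\phi\,\dd\eta$ and then to \eqref{it:cancellation}) is structurally identical to the paper's derivation of \eqref{eq:last_L} and its sequel; but to close the reduction to the spherical integral one again needs the Taylor reading, which makes the radial integral a pure boundary term $-\psi_\theta^{(N-1)}(0)$, whereas your $(r\partial_r)^N$ version reintroduces the non-integrable powers you set out to regularize.
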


Before going into the  proof of each one of the two lemmas above we make some comments that are relevant for any $k \in \N$ and $\alpha > 0$.

Note that for $\phi \in \mathcal{D} (\R^k \setminus \{ 0 \})$ we can use spherical coordinates in order to write
\begin{equation}
\int_{\R^k \setminus \{ 0 \}} \frac{\phi(\eta)}{q_\alpha(\eta)} \, \dd \eta = 
\int_{\SS^{k - 1}} \frac{1}{q_\alpha(\theta)} \bigg( \int_0^\infty \rho^{k - \alpha - 1} \phi (\rho \theta) \, \dd \rho \bigg) \, \dd S(\theta),
\label{id:2extend}
\end{equation}
where the volume form {$\dd S$ denotes the usual spherical measure on $\SS^{k- 1}$. With this identity in hand we see that if we aim at extending \eqref{map:distribution} it would be convenient to first extend the distribution in $\R\setminus \{ 0 \}$, given by
\begin{equation}
\psi \in \mathcal{D} (\R \setminus \{ 0 \}) \longmapsto \int_0^\infty \rho^{k - \alpha - 1} \psi (\rho) \, \dd \rho.
\label{map:distribution_R-0}
\end{equation}
In order to carry out this extension, consider for $a\in\R$ the function ${\rm x}_+^a : \R \setminus \{ 0 \} \rightarrow [0,\infty)$ given by the formula
\begin{equation}
\mathrm{x}_+^a (x) =  \begin{cases} x^a, & x>0,\vspace{.9em} \\  0, & x < 0. \end{cases}
\label{id:xa+}
\end{equation}
If $-1 < a$ then we can extend the function ${\rm x}_+^a$ to a function defined in the whole set $\R$ by setting for example ${\rm x}_+^a (0) = 0$; this extension is locally integrable and consequently defines a tempered distribution in $\R$, still denoted by ${\rm x}_+^a$:
\begin{equation}
\langle {\rm x}_+^a, \psi \rangle = \int_0^\infty x^a \psi(x)\, \dd x, \qquad \psi \in \mathcal{S}(\R), \,  a > -1.
\label{id:g-1}
\end{equation}
{Our next step is to construct a distribution on $\R$ which extends the function ${\rm x}_+^a$ when $a \leq -1$, defined by \eqref{id:xa+}  in $\R \setminus \{ 0 \}$.} {That is, we want to extend the functional
\begin{equation}
\psi \in \mathcal{D} (\R \setminus \{ 0 \}) \longmapsto \int_0^\infty x^a \psi (x) \, \dd x,
\label{map:distribution_R-0_2}
\end{equation}
 for $a \leq -1$.} We  first deal with the case 
$a = -1$. If $\psi \in \mathcal{D} (\R \setminus \{ 0 \})$ we have that
\[
\int_0^\infty x^{-1} \psi(x)\, \dd x  = - \int_0^\infty \log (x) \, \psi^\prime(x)\, \dd x,
\]
where $\psi^\prime$ denotes the first-order derivative of $\psi$. Note that the right-hand side of the previous inequality is finite for every 
$\psi \in \mathcal{S}(\R)$. Thus, we define the desired extension as
\begin{equation}
\langle {\rm x}_+^{-1}, \psi \rangle = - \int_0^\infty \log (x) \, \psi^\prime(x)\, \dd x, \qquad \forall\, \psi \in \mathcal{S}(\R).
\label{id:e-1}
\end{equation}
In order to perform the extension in the range $a < - 1$, we first note that for $l \in \N$ and $ b \notin \{ 0, \dots, l - 1 \} $ the function in \eqref{id:xa+} satisfies the identity
\[
\frac{d^l}{dx^l} {\rm x}_+^b = b \dots (b - (l - 1)) {\rm x}_+^{b-l}, 
\]
or equivalently, if $a \notin \{ -1, \dots, -l \}$
\[
{\rm x}_+^a = \frac{1}{(a + l) \dots (a +1)}  \frac{d^l}{dx^l} {\rm x}_+^{a+l}. 
\]
This means that, if $l \in \N$, $a \notin \{ -1, \dots, -l \}$ and $\psi \in \mathcal{D} (\R \setminus \{ 0 \})$ then
\[ 
\int_0^\infty x^a \psi(x)\, \dd x  = \frac{(-1)^l}{(a+l)\dots (a+1)} \int_0^\infty x^{a + l} \, \psi^{(l)}(x)\, \dd x. 
\]
Here $\psi^{(l)}$ denotes the $l^{\rm th}$-order derivative of $\psi$. Therefore,  whenever $-l-1 \leq a < - l $  with $l \in \N$, we {can define the tempered distribution ${\rm x}_+^a$ by}
\begin{equation}
\langle {\rm x}_+^a, \psi \rangle = \frac{(-1)^l}{(a+l)\dots (a+1)} \langle {\rm x}_+^{a+l}, \psi^{(l)} \rangle,  \qquad \psi \in \mathcal{S}(\R).
\label{id:l-1}
\end{equation}

The procedure described above defines for all $a \in \R$ a tempered distribution 
${\rm x}_+^a$ on $\R$ that extends the distribution given in 
\eqref{map:distribution_R-0_2} initially defined on $\R \setminus \{ 0 \}$. {We 
now  use this distribution ${\rm x}_+^a$ in order to define a linear functional 
that eventually will represent the tempered distribution $q_\alpha^{-1}$ in the 
whole of $\R^k$.} {This linear functional is}
\begin{equation}
\phi \in \mathcal{S}(\R^k) \longmapsto \int_{\SS^{k - 1}} \frac{1}{q_\alpha(\theta)} \langle {\rm x}_+^{k - \alpha - 1}, \psi_\theta \rangle \, \dd S(\theta)
\label{map:qalpha-1}
\end{equation}
where $\psi_\theta (x) = \phi (x \theta)$ for $x \in \R$. Note that this functional  {is indeed} a tempered distribution on $\R^k$.

{Using \eqref{id:2extend} and the fact that ${\rm x}_+^{k - \alpha - 1}$ extends the functional \eqref{map:distribution_R-0} we conclude that}
\[
\int_{\SS^{k - 1}} \frac{1}{q_\alpha(\theta)} \langle {\rm x}_+^{k - \alpha - 1}, \psi_\theta \rangle \, \dd S(\theta) = \int_{\R^k \setminus \{ 0\}} \frac{\phi(\eta)}{q_\alpha(\eta)} \, \dd \eta
\]
for all $\phi \in \mathcal{S}(\R^k)$ such that 
$\supp \phi \subset \R^k \setminus \{ 0 \}$ with 
$\psi_\theta (x) = \phi (x \theta)$ for $x \in \R$. This shows that the 
functional \eqref{map:qalpha-1} satisfies the property \eqref{item:ext}.

In the proofs of \Cref{lem:non_pos_integers} and \Cref{lem:pos_integers} {{we 
show that the tempered distribution \eqref{map:qalpha-1} coincides with the 
linear functionals defined in the corresponding statements, and satisfies the 
homogeneity property \eqref{item:homogen}.}}
{Moreover, when $\alpha - k \notin \N_0$ we check that this is the unique 
extension of the distribution \eqref{map:distribution}}.

\begin{proof}[Proof of \Cref{lem:non_pos_integers}]
The fact that
\[\int_{\SS^{k - 1}} \frac{1}{q_\alpha(\theta)} \langle {\rm x}_+^{k - \alpha - 1}, \psi_\theta \rangle \, \dd S(\theta) = \int_{\R^k} \frac{\phi(\eta)}{q_\alpha(\eta)} 
\]
{whenever $\alpha-k<0$} follows immediately from the definition \eqref{map:qalpha-1} for $q_\alpha^{-1}$ and the identity \eqref{id:g-1}. One only needs to undo the spherical change of coordinates. The equality
\[
\int_{\SS^{k - 1}} \frac{1}{q_\alpha(\theta)} \langle {\rm x}_+^{k - \alpha - 1}, \psi_\theta \rangle \, \dd S(\theta) = \frac{\Gamma(\alpha - k - \lfloor \alpha - k \rfloor)}{\Gamma(\alpha - k + 1)} \int_{\R^k} \frac{(\eta \cdot \nabla)^{\lceil \alpha - k \rceil} \phi(\eta)}{q_\alpha(\eta)}
\]
{for  $\alpha - k > 0$} follows from the identities \eqref{id:l-1} and 
\eqref{id:g-1}. In this case one needs to use the properties of the Gamma 
function in order to calculate the precise constant, and note that
\begin{equation}
\psi_\theta^{(l)} (x) = (\theta \cdot \nabla)^{l} \phi (x \theta), \quad \forall \,x \in \R,\enspace l \in \N.
\label{id:radial_derivative}
\end{equation}
The homogeneity property \eqref{item:homogen} follows by performing the change of 
variables $\eta = \lambda \zeta$ in the expressions of $q_\alpha^{-1}$ written in 
the statement of the lemma.

Finally, we discuss the issue of uniqueness. {The difference between 
$q_\alpha^{-1}$ and any other  distribution satisfying \eqref{item:ext} and 
\eqref{item:homogen} is  a distribution supported at $\{ 0 \}$.  This means that 
it has to be a finite linear combination of derivatives of $\delta_0$, the Dirac 
distribution in $\R^k$ supported at $\{ 0 \}$. That is, the difference can be 
written as $\sum_{|\beta| \leq l} c_\beta \partial^\beta \delta_0$ with 
$c_\beta \in \C$, $\beta \in \N_0^k$ and $l \in \N_0$.}  However, this difference 
has to satisfy \eqref{item:homogen}, namely it is homogeneous of order $-\alpha$. 
This can only happen if $c_\beta = 0$ for all $\beta \in \N_0^k$ such that 
$|\beta| \leq l$. Indeed,
\[
\langle \partial^\beta \delta_0, \phi_\lambda \rangle = \lambda^{k + |\beta|} \langle \partial^\beta \delta_0, \phi \rangle, 
\]
which means that $\partial^\beta \delta_0$ is homogeneous of order $-k-|\beta|$. 
Since $\alpha - k \notin \N_0$, $\alpha \neq k + |\beta| $ for any 
$\beta \in \N_0^k$. This completes the proof of \Cref{lem:non_pos_integers}.
\end{proof}

\begin{proof}[Proof of \Cref{lem:pos_integers}] The equality
\[
\int_{\SS^{k - 1}} \frac{1}{q_\alpha(\theta)} \langle {\rm x}_+^{k - \alpha - 1}, \psi_\theta \rangle \, \dd S(\theta) = \frac{-1}{(\alpha - k)!} \int_{\R^k} \frac{\log |\eta|}{q_\alpha(\eta)} (\eta \cdot \nabla)^{\alpha - k + 1} \phi(\eta) \, \dd \eta 
\]
for $\alpha - k \in \N_0$ is a consequence of the identity \eqref{id:e-1} when 
$\alpha = k$, and it follows \eqref{id:l-1} and \eqref{id:e-1} when 
$\alpha - k \in \N$. Again one needs to use the relation 
\eqref{id:radial_derivative} {to verify this}.

Let us now address the homogeneity of $q_\alpha^{-1}$. In this case 
$\alpha - k \in \N_0$ and we will have that the homogeneity holds if and only if 
we assume condition \eqref{it:cancellation}.

After the change of variables $\eta = \lambda \zeta$ we have that
\begin{equation}\label{eq:last_L}
\begin{split}
& \int_{\R^k} \frac{\log |\eta|}{q_\alpha(\eta)}  (\eta \cdot \nabla)^{\alpha - k + 1} \phi(\eta) \, \dd \eta 
\\
&\qquad \qquad = \lambda^{- \alpha} \int_{\R^k} \frac{\log |\zeta|}{q_\alpha(\zeta)} (\zeta \cdot \nabla)^{\alpha - k + 1} [ \lambda^k \phi (\lambda \centerdot)](\zeta) \, \dd \zeta 
\\
& \qquad\qquad\quad + \lambda^{- \alpha} \log \lambda \int_{\R^k} \frac{1}{q_\alpha(\zeta)} (\zeta \cdot \nabla)^{\alpha - k + 1} [ \lambda^k \phi (\lambda \centerdot)](\zeta) \, \dd \zeta.
\end{split}
\end{equation}
Hence, we can see that $q_\alpha^{-1}$ satisfies \eqref{item:homogen} if and 
only if the last term on the right-hand side of the previous identity vanishes 
for all $\phi \in \mathcal S(\R^k)$. Let us compute this term explicitly. 
 
Using spherical coordinates we have for any $\phi \in \mathcal{S}(\R^k)$ that
\[
\int_{\R^k} \frac{1}{q_\alpha(\zeta)} (\zeta \cdot \nabla)^{\alpha - k + 1} \phi (\zeta) \, \dd \zeta  = \int_{\SS^{k - 1}} \frac{1}{q_\alpha (\theta)} \int_0^\infty \psi_\theta^{(\alpha - k + 1)} (\rho) \, \dd \rho \, \dd S(\theta)
\]
with $\psi_\theta (\rho) = \phi (\rho \theta)$ for $\rho > 0$, and 
$\psi_\theta^{(\alpha - k + 1)}$ denoting the $(\alpha - k + 1)^{\rm th}$-order 
derivative of $\psi_\theta$. Since by \eqref{id:radial_derivative}
\[
\int_0^\infty \psi_\theta^{(\alpha - k + 1)} (\rho) \, \dd \rho = - (\theta \cdot \nabla)^{\alpha - k} \phi(0),
\]
we have that
\[
\begin{split}
 \int_{\R^k} \frac{1}{q_\alpha(\zeta)} (\zeta \cdot \nabla)^{\alpha - k + 1} \phi (\zeta) \, \dd \zeta & = -  \int_{\SS^{k - 1}} \frac{(\theta \cdot \nabla)^{\alpha - k} \phi(0)}{q_\alpha (\theta)} \, \dd S(\theta)
 \\
& = -  \sum_{|\beta |=\alpha - k} c_\beta \partial^\beta \phi(0) \int_{\SS^{k - 1}} \frac{\theta^\beta }{q_\alpha (\theta)} \, \dd S(\theta)
\end{split}
\]
where the coefficients $c_\beta$ are strictly positive. Therefore, the 
right-hand side vanishes for all $\phi \in \mathcal S(\R^k)$ if and only if 
\eqref{it:cancellation} holds. Consequently, if the cancellation property 
\eqref{it:cancellation}  is verified then 
\[
\int_{\R^k} \frac{1}{q_\alpha(\zeta)} (\zeta \cdot \nabla)^{\alpha - k + 1} \phi (\zeta) \, \dd \zeta  = 0
\]
for all $\phi \in \mathcal S(\R^k)$, and by \eqref{eq:last_L}, property 
\eqref{item:homogen} is satisfied as desired. The proof of the lemma is complete
\end{proof}

\begin{remark}\label{rmrk:unique}{Condition \eqref{it:cancellation} in the 
statement of Lemma~\ref{lem:pos_integers} is necessary: if $q_\alpha $ does not 
satisfy \eqref{it:cancellation} then there is no distribution in 
$\mathcal S'(\R^k)$ such that \eqref{item:ext} and \eqref{item:homogen} are 
satisfied simultaneously.} To see this let $t_\alpha ^{-1}$ be a tempered 
distribution satisfying \eqref{item:ext}. As mentioned in the proof of 
\Cref{lem:non_pos_integers} $t_\alpha ^{-1}$  differs from $q_\alpha^{-1}$ by a 
linear combination of derivatives of  a Dirac delta supported at $\{0\}$. It is 
easy to verify  that a linear combination of derivatives of $\delta_0$ does not 
affect the second summand in the right hand side of \eqref{eq:last_L}. Thus, if 
$t_\alpha ^{-1}$ satisfies \eqref{item:ext} and \eqref{item:homogen} then the 
second summand in the right hand side of \eqref{eq:last_L} must vanish and the 
same proof yields that $t_\alpha ^{-1}$ must satisfy \eqref{it:cancellation}. 
\end{remark}

For future reference it is important to {note} that  $q_\alpha^{-1} \in H^{-\delta}_{\rm loc} (\R^k)$ with $\delta = \lceil \alpha - k \rceil$ if  $\alpha - k/ 2 < \lceil \alpha - k \rceil$ and $\delta > \alpha - k/2$  if $\alpha - k/ 2 \geq \lceil \alpha - k \rceil$. In order to prove this property we show that $\chi q_\alpha^{-1} \in H^{-\delta} (\R^k)$ for any $\chi \in \mathcal{D}(\R^n)$. By definition we have for all  $\phi \in \mathcal{S}(\R^k)$ that
\begin{equation}
|\langle \chi q_\alpha^{-1}, \phi \rangle| \lesssim \Big( \int_{K} \Big( \log \frac{1+|\eta|}{|\eta|} \Big)^p \frac{|\eta|^{p \lceil \alpha - k \rceil}}{|q_\alpha(\eta)|^p} \, \dd \eta \Big)^{1/p} \| \phi \|_{W^{\lceil \alpha - k \rceil, p^\prime}(\R^k)};
\label{in:chiqalpha-1}
\end{equation}
here $K = \supp \chi$. Using spherical coordinates one can check that the  integral in $K$ is finite whenever $p < k/ [ k - (\lceil \alpha - k \rceil -(\alpha - k)) ]$. Since $\alpha - k/ 2 < \lceil \alpha - k \rceil$ if and only if $2 < k/ [ k - (\lceil \alpha - k \rceil -(\alpha - k)) ]$, we can choose $p = 2$ in \eqref{in:chiqalpha-1} and deduce by duality that $\chi q_\alpha^{-1} \in H^{-\lceil \alpha - k \rceil} (\R^k)$ whenever
$\alpha - k/ 2 < \lceil \alpha - k \rceil$. If $\alpha - k/ 2 \geq \lceil \alpha - k \rceil$, by Sobolev's embedding theorem we have that
\[
\begin{split}
\| \phi \|_{W^{\lceil \alpha - k \rceil, p^\prime}(\R^k)} & \eqsim \|(I - \Delta)^{\lceil \alpha - k \rceil/2} \phi \|_{L^{p^\prime}(\R^k)} \\
& \lesssim \|(I - \Delta)^{\lceil \alpha - k \rceil/2} \phi \|_{H^{s}(\R^k)} = \| \phi \|_{H^{s + \lceil \alpha - k \rceil}(\R^k)}
\end{split}
\]
for $s-k/2 = - k/p^\prime$. The upper bound on $p$ imposes the lower bound $s + \lceil \alpha - k \rceil > \alpha - k/2 $. Thus, if $\alpha - k/ 2 \geq \lceil \alpha - k \rceil$, again by duality we can ensure that $\chi q_\alpha^{-1} \in H^{-\delta} (\R^k)$ for all $\delta > \alpha - k/2$.

{We conclude this section by recording some useful remarks. 
Let $\psi \in \mathcal{S}(\R^k)$ be such that 
$\int_{\R^k} \psi(\eta) \, \dd \eta = 1$ and for 
$t \in \R_+$ we set $\psi_{t} (\eta) = t^k \psi (t \eta)$ for $\eta \in \R^k$.
We introduce the smooth function
\begin{equation}
q_{\alpha, t}^{-1} (\eta) = (q_\alpha^{-1} \star \psi_{t})(\eta) = \langle q_\alpha^{-1}, \psi_{t} (\eta - \centerdot) \rangle, \qquad  \eta \in \R^k.
\label{id:qalphaN-1}
\end{equation}
First, one can verify that
\begin{equation}
\lim_{t \to 0} \| q_{\alpha, t}^{-1}    \|_{L^\infty(\R^k)} = 0, 
\label{lim:conv_t_small}
\end{equation} 
This follows from  \eqref{item:homogen} which implies that
\[
q_{\alpha, t}^{-1} (\eta) =  \langle q_\alpha^{-1}, \psi_{t} (\eta - \centerdot) \rangle =  t^\alpha \langle q_\alpha^{-1}, \psi  (t\eta - \centerdot) \rangle. 
\]
Secondly, we have that for every compact subset $K$ of $\R^k$ such that 
$0 \notin K$
\begin{equation}
\lim_{t \to \infty} \sup_{\eta \in K} \Big| \frac{1}{q_\alpha (\eta)} - q_{\alpha, t}^{-1} (\eta) \Big| = 0.
\label{lim:uniform_convergence_compacts}
\end{equation}
Thirdly, for every $\phi \in \mathcal{S}(\R^k)$ we have that
\begin{equation}
\lim_{t \to \infty} \int_{\R^k} q_{\alpha, t}^{-1} (\eta) \phi(\eta) \, \dd \eta = \langle q_\alpha^{-1}, \phi \rangle.
\label{lim:tempered_distributions}
\end{equation}} 
Lastly, {for every $\chi \in \mathcal{D}(\R^n)$ we have that
\begin{equation}
\lim_{t \to \infty} \|\chi (q_\alpha^{-1} - q_{\alpha, t}^{-1}) \|_{H^{- \delta} (\R^k)} = 0,
\label{lim:H-deltaLOC}
\end{equation}
with $\delta = \lceil \alpha - k \rceil$ if $\alpha - k/ 2 < \lceil \alpha - k \rceil$ and $\delta > \alpha - k/2$ if $\alpha - k/ 2 \geq \lceil \alpha - k \rceil$.}

\section{Rotational smoothing in the general case}\label{sec:rotational_smoothing}
This section is divided into four subsections.  
In \Cref{sec:funct_subsec} we provide some preliminary facts about the functional 
spaces and an important property of Sobolev spaces in $\SO(n)$, and in Sections 
\ref{sec:proof-mainTH} and \ref{sec:proof_I_II_III} we prove 
\Cref{th:0alphak,th:kalpha}. Recall from our discussion in the 
\Cref{sec:precise_statement} and the statement of \Cref{th:kalpha},
the need of introducing a distribution $r_\alpha$ standing formally as the 
restriction to $ \SS^{n-1}$ of $q_\alpha^{-1} \otimes \mathbf{1}_{\R^{n - k}}$.  
In \Cref{sec:ralph_S} we give the explicit definition of $r_\alpha$.

\subsection{Some preliminary notions} \label{sec:funct_subsec} We now summarize 
the key properties of Sobolev spaces and $\SO(n)$ that one needs in order to 
prove \Cref{th:0alphak,th:kalpha}. 
We provide the statements without proofs in this section, leaving the more 
rigorous analysis of the functional framework for \Cref{sec:abstract_framework}.

Let $(M, g)$ stand for a compact boundaryless 
Riemannian manifold of dimension $m \in \N$. We denote the canonical Laplace-Beltrami in $M$ by $\Delta_g$. We denote by $\lambda_j$, $j\in \N_0$, the eigenvalues of $-\Delta_g$, which are always non-negative, and denote by $\Pi_j$ the projector onto the corresponding invariant subspace of eigenfunctions associated to $\lambda_j$.  
This eigenvalue expansion yields a natural way to define the fractional Bessel potentials of order  $s \in \R$ for every $u \in C^\infty(M)$
\begin{equation} \label{eq:bessel_pot}
(\Id - \Delta_g)^{s/2} u = \sum_{j \in \N_0} (1 + \lambda_j)^{s/2} \Pi_j u.
\end{equation}
We can now define the fractional Sobolev spaces $H^s(M)$ for $s\in \R$ in an intrinsic way as the Banach completion of {$C^\infty(M)$ with respect to the norm}
\[
\|u  \|_{H^s(M)} = \|(\Id - \Delta_g)^{s/2}  u \|_{L^2(M)} = \Big( \sum_{j \in \N_0} (1 + \lambda_j)^s \| \Pi_j u \|_{L^2(M)}^2 \Big)^{1/2}.
\]

The next proposition yields an important relation between Sobolev spaces in 
$\SO(n)$ and $\SS^{n-1}$ which plays an important role in the proof of 
\Cref{th:0alphak,th:kalpha}.

\begin{proposition} \label{prop:SOn_Sn-1_Sobolev} \sl Fix any $v \in \SS^{n-1}$,  and let $f$ be in $H^s(\SS^{n - 1})$. Consider the function $F_v: \SO(n) \to \SS^{n-1}$ given by
$F_v (Q) = Q^\T v$ for every rotation $Q \in \SO(n)$. 
Then, 
it holds that
\[ 
\| f \circ F_v \|_{H^s(\SO(n))} = S(\SS^{n-1})^{-1/2}\| f \|_{H^s(\SS^{n - 1})}
 \]
for all $s \in \R$. 
\end{proposition}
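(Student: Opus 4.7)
The plan is to diagonalise both sides spectrally and show that the pullback $f \mapsto f \circ F_v$ intertwines the Laplace--Beltrami operators on $\SS^{n-1}$ and $\SO(n)$ (the latter equipped with a suitably normalised bi-invariant metric). The identity will then follow directly from the spectral definition \eqref{eq:bessel_pot} of the Bessel potentials. I would first dispose of the $L^2$ endpoint and then promote to arbitrary $s\in\R$ via the spectral calculus.

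For the case $s=0$, the equality
\[
\| f \circ F_v \|_{L^2(\SO(n))} = S(\SS^{n-1})^{-1/2} \| f \|_{L^2(\SS^{n-1})}
\]
is immediate from \Cref{prop:SOn_Sn-1}, which has already been used in the proof of \eqref{id:easy_sec} via \eqref{id:int_haar}. Geometrically, $F_v$ realises the quotient map $\SO(n)\to \SO(n)/\mathrm{Stab}(v)\cong \SS^{n-1}$, and the Haar measure disintegrates against the rotation-invariant measure on the sphere.

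The heart of the matter is the intertwining
\[
\Delta_{\SO(n)}(f \circ F_v) = (\Delta_{\SS^{n-1}} f) \circ F_v, \qquad f \in C^\infty(\SS^{n-1}).
\]
With the bi-invariant metric on $\SO(n)$ normalised so that the induced quotient metric on $\SO(n)/\mathrm{Stab}(v)$ coincides with the round metric on $\SS^{n-1}$, the map $F_v$ is a Riemannian submersion whose fibres (left cosets of $\mathrm{Stab}(v)\cong \SO(n-1)$) are totally geodesic, and O'Neill's formulas then yield the intertwining on basic functions. Equivalently, by the Peter--Weyl theorem, the space of left-$\mathrm{Stab}(v)$-invariant functions on $\SO(n)$ decomposes over the class-one representations of $(\SO(n),\SO(n-1))$, which are precisely the spherical-harmonic representations indexed by $\ell\in \N_0$, with Casimir eigenvalues matching $\lambda_\ell = \ell(\ell+n-2)$. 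Granting this, each eigenfunction $\Pi_j f$ of $-\Delta_{\SS^{n-1}}$ lifts to an eigenfunction $(\Pi_j f)\circ F_v$ of $-\Delta_{\SO(n)}$ with the same eigenvalue, so summing the spectral series gives
\[
(\Id - \Delta_{\SO(n)})^{s/2}(f \circ F_v) = \bigl((\Id - \Delta_{\SS^{n-1}})^{s/2} f\bigr) \circ F_v
\]
for every $f\in C^\infty(\SS^{n-1})$. Applying the $L^2$ identity to the latter concludes the proof for smooth $f$ and extends by density to all $f\in H^s(\SS^{n-1})$ (using duality for $s<0$).

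The main obstacle is the intertwining step: the metrics on $\SO(n)$ and $\SS^{n-1}$ must be normalised so that the eigenvalues of $-\Delta_{\SO(n)}$ on class-one functions match $\lambda_\ell$ exactly and not merely up to a common scalar. This is essentially an input from the homogeneous-space structure of $\SO(n)$ and is the natural content for \Cref{app:orthogonal_group}.
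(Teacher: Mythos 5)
Your proposal is correct and follows essentially the same strategy as the paper: reduce to the $L^2$ identity of \Cref{prop:SOn_Sn-1} and lift it to all $s\in\R$ by showing that $F_v$ intertwines $\Delta_{\SS^{n-1}}$ and $\Delta_{\SO(n)}$, which is exactly the content of the paper's \Cref{lem:SOn_Sn-1_LB} (proved there by a direct left-invariant frame computation rather than via O'Neill's submersion formulas or Peter--Weyl, but to the same effect). The paper phrases the spectral step in terms of matching the projectors $\Pi_l$ and $\Pi_k'$ rather than commuting $(\Id-\Delta)^{s/2}$ past the pullback, but the two formulations are interchangeable.
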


This result is a generalization for $s\neq 0$ of the well known identity that we 
have already stated in \eqref{id:int_haar}. The proof requires some preliminary 
notions about $\SO(n)$ and is given in \Cref{app:orthogonal_group}. It essentially follows from an identity relating the operators $\Delta_{\SO(n)}$ and $\Delta_{\SS^{n-1}}$ that we prove in \Cref{lem:SOn_Sn-1_LB}.
 
{The following proposition, which we prove in \Cref{sec:abstract_framework}, will allow us to work with Sobolev spaces on local charts.} 
%

\begin{proposition}\label{prop:norm_charts} \sl Let $(M, g)$ be a compact 
boundaryless Riemannian manifold of dimension $m \in \N$. 
Let $\{ (U_1, \varphi_1), \dots, (U_l, \varphi_l) \}$ be a smooth atlas for $M$, 
and consider $\{ \chi_1, \dots, \chi_l \}$ a smooth partition of unity 
subordinated to the atlas. Then, for $s \in \R$ we have that
\[ \| u \|_{H^s(M)} \eqsim \sum_{j = 1}^l \| (\chi_j u)\circ \varphi_j^{-1} \|_{H^s(\R^m)} \]
for all $u \in C^\infty(M)$. The implicit constants only depend on $m$, $s$, the 
atlas and the partition of unity.
\end{proposition}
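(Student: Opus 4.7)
The plan is to exploit that on a compact closed Riemannian manifold the operator $(\Id-\Delta_g)^{s/2}$ defining the intrinsic norm of $H^s(M)$ is an elliptic pseudodifferential operator of order $s$. The proof will combine this structural fact with (i) the invariance of Sobolev norms under composition with smooth diffeomorphisms of open subsets of $\R^m$ and (ii) the boundedness of multiplication by smooth cutoffs on $H^s(M)$, glued together via the partition of unity $\{\chi_j\}$.

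The key auxiliary statement I would establish is the following chart-level equivalence: whenever $\chi \in C^\infty(M)$ is supported in a single chart $(U_j,\varphi_j)$, one has
\[ \|\chi u\|_{H^s(M)} \eqsim \|(\chi u)\circ \varphi_j^{-1}\|_{H^s(\R^m)} \]
with implicit constants depending only on $m$, $s$, the chart and $\chi$. Setting $v=(\chi u)\circ\varphi_j^{-1}$, the pushforward of $(\Id-\Delta_g)^{s/2}$ to $\varphi_j(U_j)\subset \R^m$ is an elliptic pseudodifferential operator of order $s$; testing it against functions supported in a fixed compact subset of $\varphi_j(U_j)$ yields two-sided $L^2$-bounds in terms of $\|v\|_{H^s(\R^m)}$, via the standard parametrix construction for elliptic pseudodifferential operators. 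This is where the main technical content lies, and it is also the main obstacle for non-integer or negative $s$: for $s\in \N_0$ the equivalence reduces to the chain rule applied to iterated covariant derivatives of $v$, for $s<0$ it follows by duality from the case $s>0$, and for non-integer real $s$ it can be obtained by complex interpolation between integer exponents, or more transparently by invoking the pseudodifferential calculus directly.

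Granting this chart-level equivalence, the rest is bookkeeping. For each $j$ the multiplication by $\chi_j$ is a zero-order pseudodifferential operator on $M$, hence bounded on $H^s(M)$, so that $\|\chi_j u\|_{H^s(M)}\lesssim \|u\|_{H^s(M)}$. Combining this with the chart-level equivalence and summing in $j$ produces the upper bound
\[ \sum_{j=1}^l \|(\chi_j u)\circ \varphi_j^{-1}\|_{H^s(\R^m)} \eqsim \sum_{j=1}^l \|\chi_j u\|_{H^s(M)} \lesssim \|u\|_{H^s(M)}. \]
For the reverse direction, the partition of unity identity $u=\sum_{j=1}^l \chi_j u$, the triangle inequality in $H^s(M)$, and again the chart-level equivalence yield
\[ \|u\|_{H^s(M)} \leq \sum_{j=1}^l \|\chi_j u\|_{H^s(M)} \lesssim \sum_{j=1}^l \|(\chi_j u)\circ \varphi_j^{-1}\|_{H^s(\R^m)}. \]
All implicit constants are independent of $u$ and depend only on $m$, $s$, the atlas and the partition of unity, which is exactly the required conclusion.
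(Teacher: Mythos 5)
Your proposal is correct and follows essentially the same route as the paper: both reduce the matter to the fact that $(\Id-\Delta_g)^{s/2}$ is an elliptic pseudodifferential operator of order $s$ whose pushforward to a chart is again elliptic of order $s$, so that the parametrix gives the two-sided chart-level comparison, and the global estimate then follows by gluing with the partition of unity. The paper phrases the argument as a pair of global inequalities (first $s=2k$ by hand, then general $s$ via pseudodifferential calculus, citing Seeley and Strichartz for the details of the chart-level identification), while you extract the same content as an explicit chart-level lemma plus bookkeeping; these are cosmetically different presentations of one and the same argument, and both are sketches that defer the parametrix construction to standard theory.
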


We now define the spaces $H^s(M; \dot{H}^t(\R^n))$ that appear in the statement 
of \Cref{th:0alphak,th:kalpha}.

\begin{definition}\label{def:A} \sl Let $\mathcal{A}(\R^n \times M)$ denote the set of 
$u \in C^\infty(\R^n \times M)$ such that for every $\alpha, \beta \in \N_0^n$ 
and every $k \in  \N_0$ there exists $C>0$ such that
\[ |x^\beta (\Id - \Delta_g)^k \partial^\alpha u (x, p)| \leq C \qquad \forall\, (x, p) \in \R^n \times M.\]
\end{definition}

A function  
$u \in C^\infty (\R^n \times M) $ belongs to $ \mathcal{A}(\R^n \times M)$ if and only if its Fourier transform, denoted by $\widehat{u}$  and defined by
\begin{equation} \label{eq:Fourier}  
 (\xi, p) \in \R^n \times M \longmapsto \frac{1}{(2\pi)^{n/2}} \int_{\R^n} e^{-\i \xi \cdot x} u(x,p) \, \dd x \in \C, 
 \end{equation}
belongs to $\mathcal{A}(\R^n \times M)$.
In \Cref{sec:abstract_framework} we show that the non-negative functional
\begin{equation} \label{eq:functional}
 u \in \mathcal{A}(\R^n \times M)
\longmapsto  \Big( \int_M \| (\Id - \Delta_g)^{s/2} u(\centerdot, p) \|_{\dot{H}^t(\R^n)}^2 \, \dd \mu_g(p) \Big)^{1/2} 
\end{equation}  
is well defined for all $s \in \R$ and $t \in \R_+$. It defines
a norm on $\mathcal{A}(\R^n \times M)$ that will be denoted from now on by 
$\| \centerdot \|_{H^s(M; \dot{H}^t(\R^n))}$. Then, the space 
$H^s(M; \dot{H}^t(\R^n))$ is the Banach completion of 
$\mathcal{A}(\R^n \times M)$  with respect to the previous norm. {For the 
convenience of the reader let us point out that} $L^2(M; \dot{H}^t(\R^n))$ and 
$ H^0(M; \dot{H}^t(\R^n))$ denote the same space. 
In \Cref{lem:new_norm} we also prove that
\begin{equation}
\label{id:norm_different-look}
\| u \|_{H^s(M; \dot{H}^t(\R^n))} = \Big( \int_{\R^n} |\xi|^{2t} \| \widehat{u}(\xi, \centerdot) \|_{H^s(M)}^2 \, \dd \xi \Big)^{1/2}
\end{equation}
for all $u \in \mathcal{A}(\R^n \times M)$, $s \in \R$ and $t \in \R_+$.
{Finally, we show in \Cref{prop:distributionHsHt} that every} 
$u \in H^s(M; \dot{H}^t(\R^n))$ with $s \in \R$ and $t \in \R_+$ determines a 
unique distribution in $\R^n \times M$.

{With these ingredients we are now ready to prove 
\Cref{th:0alphak,th:kalpha}. We remind the interested reader that detailed 
statements and proofs of the preliminary results above are contained in 
\Cref{sec:abstract_framework}.}


\subsection{Proof of 
\texorpdfstring{\Cref{th:0alphak,th:kalpha}}{main-th-full}} 
\label{sec:proof-mainTH}
{This section contains the full scheme needed to prove the rotational smoothing 
phenomenon.}

Consider the sequence $\{ q_{\alpha, N}^{-1}: N\in\N\}$ with $q_{\alpha, N}^{-1}$ 
defined in \eqref{id:qalphaN-1}, and define for $N \in \N$ the symbol
\[
 p_{\alpha, N}^{-1} (\xi, Q) = [q_{\alpha, N}^{-1} \otimes \mathbf{1}_{\R^{n-k}}](Q^\T \xi), \qquad  (\xi, Q) \in \R^n \times \SO(n),
\]
and the operator
\begin{equation}
\label{id:Salpha}
S_{\alpha, N} f (x, Q) = \frac{1}{(2\pi)^{n/2}} \int_{\R^n} e^{\i x \cdot \xi} p_{\alpha, N}^{-1} (\xi, Q) \widehat{f}(\xi) \, \dd \xi,\qquad (x, Q) \in \R^n \times \SO(n),
\end{equation} 
with $f \in \mathcal{S}(\R^n)$. One can 
check that $S_{\alpha, N} f \in \mathcal{A}(\R^n \times \SO(n))$ for all 
$f \in \mathcal{S}(\R^n)$ and all $N \in \N$. By 
\eqref{id:norm_different-look} we have that for 
every $N \in \N$ and $f \in \mathcal{S}(\R^n)$, the following identity holds
\begin{equation}
\label{id:plancherel_SalphaN}
\begin{aligned}
\| S_{\alpha, N} f & \|_{H^{-\delta}(\SO(n); \dot{H}^\alpha(\R^n))}^2  = \int_{\R^n} |\xi|^{2 \alpha} \| p_{\alpha, N}^{-1}(\xi, \centerdot) \|_{H^{-\delta}(\SO(n))}^2 |\widehat{f}(\xi)|^2 \, \dd \xi.
\end{aligned}
\end{equation}
For $v= {\xi}/|\xi|$, we can write 
\[
p_{\alpha, N}^{-1} (\xi, Q) = [q_{\alpha, N}^{-1} \otimes \mathbf{1}_{\R^{n-k}}](|\xi|  Q^\T v),  \qquad  (\xi, Q) \in \R^n \times \SO(n),
\]
and by 
\Cref{prop:SOn_Sn-1_Sobolev} 
we have that  
\begin{equation}
\label{id:killing_freedom_degrees}
\begin{split}
\| p_{\alpha, N}^{-1} (\xi, \centerdot)  \|_{H^{- \delta}(\SO(n))}& = \| [q_{\alpha, N}^{-1} \otimes \mathbf{1}_{\R^{n-k}}](|\xi| F_v) \|_{H^{- \delta}(\SO(n))} 
\\
& = S(\SS^{n-1})^{-1/2} \| [q_{\alpha, N}^{-1} \otimes \mathbf{1}_{\R^{n-k}}](|\xi| \centerdot ) \|_{H^{- \delta}(\SS^{n-1})}.
\end{split}
\end{equation}
Our first step towards deriving \Cref{th:0alphak,th:kalpha}
will be to show that, for all $f \in \mathcal{S}(\R^n)$, the 
sequence $\{S_{\alpha, N} f: N\in\N\}$ is a Cauchy sequence with respect to the 
norm $\| \centerdot \|_{H^{-\delta}(\SO(n); \dot{H}^\alpha(\R^n))}$. Start by 
considering an arbitrary $f \in \mathcal{S}(\R^n)$ and noting that for $L, N \in \N$ we have that
\[
\begin{split}
S(\SS^{n-1}) & \| S_{\alpha, N+L} f - S_{\alpha, N} f \|_{H^{-\delta}(\SO(n); \dot{H}^\alpha(\R^n))}^2
\\
&= \int_{\R^n} |\xi|^{2 \alpha} \| [(q_{\alpha, N+L}^{-1} - q_{\alpha, N}^{-1}) \otimes \mathbf{1}_{\R^{n-k}}](|\xi| \centerdot ) \|_{H^{- \delta}(\SS^{n-1})}^2 |\widehat{f}(\xi)|^2 \, \dd \xi.
\end{split}
\]

{Recall that $\delta = \lceil \alpha - k \rceil$ if  $\alpha - k/ 2 < \lceil \alpha - k \rceil$, and $\delta > \alpha - k/2$  if $\alpha - k/ 2 \geq \lceil \alpha - k \rceil$. We make the following claims:} 
\begin{enumerate}[label=(\Roman*),ref=\Roman*]
\item \label{lim:vanishing_Cauchy} For every $\xi \in \R^n \setminus \{ 0 \}$, we have that
\[
\lim_{N \to \infty} |\xi|^\alpha \| [(q_{\alpha, N+L}^{-1} - q_{\alpha, N}^{-1}) \otimes \mathbf{1}_{\R^{n-k}}](|\xi| \centerdot ) \|_{H^{- \delta}(\SS^{n-1})} = 0
\]
uniformly for all $L \in \N$.
\item \label{in:domination_sequence} There exists a constant $C > 0$ such that
\[
|\xi|^\alpha \| [q_{\alpha, N}^{-1} \otimes \mathbf{1}_{\R^{n-k}}](|\xi| \centerdot ) \|_{H^{- \delta}(\SS^{n-1})} \leq C
\]
for all $\xi \in \R^n \setminus \{ 0 \}$ and all $N \in \N$.
\item\label{in:r_alpha_convergence} {There is a distribution $r_\alpha \in H^{-\delta}(\SS^{n-1})$ such that
\[
\lim_{N \to \infty} \|\lambda^\alpha [q_{\alpha, N}^{-1}   \otimes \mathbf{1}_{\R^{n-k}}](\lambda \centerdot ) - r_\alpha \|_{H^{- \delta} (\SS^{n-1})} = 0,
\]
for every $\lambda>0$, where $r_\alpha$ is independent of $\lambda$.}
\end{enumerate}
{We postpone the proofs of the claims \eqref{lim:vanishing_Cauchy}, 
\eqref{in:domination_sequence} and \eqref{in:r_alpha_convergence} until 
\Cref{sec:proof_I_II_III} below. Assuming \eqref{lim:vanishing_Cauchy} and 
\eqref{in:domination_sequence}, the dominated convergence theorem implies that}
\[
\lim_{N \to \infty} \| S_{\alpha, N+L} f - S_{\alpha, N} f \|_{H^{-\delta}(\SO(n); \dot{H}^\alpha(\R^n))} = 0 
 \]
uniformly in  $L \in \N$, which means that $\{S_{\alpha, N} f: N\in\N\}$ is Cauchy with respect to the norm  $\| \centerdot \|_{H^{-\delta}(\SO(n); \dot{H}^\alpha(\R^n))}$.


We proceed to proving the identity in the statement of the theorem. Let us denote by  $u_\alpha^f \in H^{-\delta}(\SO(n); \dot{H}^\alpha(\R^n))$ the equivalence 
class of the Cauchy sequence $\{S_{\alpha, N} f: N\in\N\}$; we have by 
\eqref{id:plancherel_SalphaN} and \eqref{id:killing_freedom_degrees} that
\[
\begin{split}
S(\SS^{n-1}) \| & u_\alpha^f \|_{H^{-\delta}(\SO(n); \dot{H}^\alpha(\R^n))}^2
\\
& = \lim_{N \to \infty} \int_{\R^n} |\xi|^{2 \alpha} 
\| [q_{\alpha, N}^{-1} \otimes \mathbf{1}_{\R^{n-k}}](|\xi| \centerdot ) \|_{H^{- \delta}(\SS^{n-1})}^2 |\widehat{f}(\xi)|^2 \, \dd \xi.
\end{split}
\]
On the one hand, as mentioned in \Cref{sec:funct_subsec},   $u_\alpha^f$ determines a distribution in $\R^n \times \SO(n)$. On the other hand, we have that
\[ 
S_\alpha f (x, Q) - S_{\alpha, N} f (x, Q) = \frac{1}{(2\pi)^{n/2}} \langle (q_\alpha^{-1} - q_{\alpha, N}^{-1})\otimes \mathbf{1}_{\R^{n - k}}, \widehat{ f(Q (\centerdot + x))} \rangle. 
\]
By \eqref{lim:tempered_distributions} we can ensure that for every  compact $K \subset \R^n$ we have
\[
\lim_{N \to \infty} \|S_\alpha f - S_{\alpha, N} f\|_{L^\infty(K \times \SO(n))} = 0.
\]
Hence $S_\alpha f \in C(\R^n \times \SO(n))$ can be identified with a  
distribution in $\R^n \times \SO(n)$ and
\[
S_{\alpha, N} f \xrightarrow[N \to \infty]{} S_\alpha f \quad \textnormal{in}\quad \mathcal{D}^\prime (\R^n \times \SO(n)).
\]
This implies that $u_\alpha^f = S_\alpha f $ {which yields that}
\begin{equation}
\label{id:lim-int_plancherel}
\begin{aligned}
S(\SS^{n-1}) & \| S_\alpha f \|_{H^{-\delta}(\SO(n); \dot{H}^\alpha(\R^n))}^2 \\
& = \lim_{N \to \infty} \int_{\R^n} |\xi|^{2 \alpha} 
\| [q_{\alpha, N}^{-1} \otimes \mathbf{1}_{\R^{n-k}}](|\xi| \centerdot ) \|_{H^{- \delta}(\SS^{n-1})}^2 |\widehat{f}(\xi)|^2 \, \dd \xi.
\end{aligned}
\end{equation}
%
%
{Now \eqref{in:r_alpha_convergence} implies that
\[
\| r_\alpha  \|_{H^{-\delta} (\SS^{n-1})} = \lim_{N \to \infty} |\xi|^\alpha \| [q_{\alpha, N}^{-1} \otimes \mathbf{1}_{\R^{n-k}}](|\xi| \centerdot ) \|_{H^{- \delta}(\SS^{n-1})}.
\]
This identity together with \eqref{in:domination_sequence} turn, by the 
dominated convergence theorem, the identity \eqref{id:lim-int_plancherel} into
\[
\| S_\alpha f \|_{H^{-\delta}(\SO(n); \dot{H}^\alpha(\R^n))}^2 = S(\SS^{n-1})^{-1}  \| r_\alpha   \|_{H^{-\delta} (\SS^{n-1})}^2  \int_{\R^n}|\widehat{f}(\xi)|^2 \, \dd \xi.
\]
{Consequently, Plancherel's identity in $\R^n$ yields the identities stated in 
\Cref{th:0alphak,th:kalpha}. This completes their proofs, up to showing the 
claims \eqref{lim:vanishing_Cauchy}, 
\eqref{in:domination_sequence} and \eqref{in:r_alpha_convergence}.}

\subsection{Proofs of claims \texorpdfstring{\eqref{lim:vanishing_Cauchy}, 
\eqref{in:domination_sequence} and \eqref{in:r_alpha_convergence}}{(I)-(III)} }
\label{sec:proof_I_II_III}



Before addressing these claims, we make a simple observation
about the set of distributions 
$\{ q_{\alpha, t}^{-1} \otimes \mathbf{1}_{\R^{n-k}}: t \in\R_+\}$ with 
$q_{\alpha, t}^{-1}$ defined in \eqref{id:qalphaN-1}. This is, for 
$\lambda \in \R_+$ and $\theta\in\SS^{n-1}$ we have that
\begin{equation}
\label{id:smooth_homogeneity}
\lambda^\alpha [q_{\alpha, t}^{-1} \otimes \mathbf{1}_{\R^{n-k}}](\lambda \theta ) = [q_{\alpha, \lambda t}^{-1} \otimes \mathbf{1}_{\R^{n-k}}](\theta ).
\end{equation}
As we see in the following identities this is a consequence of the homogeneity 
property of $q_\alpha^{-1}$ (see \Cref{lem:non_pos_integers,lem:pos_integers}): 
\[
\begin{split}
\lambda^\alpha q_{\alpha, t}^{-1} (\lambda \eta) &= \lambda^\alpha \langle q_\alpha^{-1}, \psi_t ( \lambda \eta - \centerdot) \rangle = \lambda^\alpha \lambda^{-\alpha} \langle q_\alpha^{-1}, \lambda^k \psi_t ( \lambda \eta - \lambda \centerdot) \rangle 
\\
& = \langle q_\alpha^{-1}, \psi_{\lambda t} ( \eta - \centerdot) \rangle = q_{\alpha, \lambda t}^{-1} (\eta),
\end{split}
\]
where $\psi_t (\eta) = t^k \psi (t \eta)$.

We start by proving \eqref{lim:vanishing_Cauchy}. In order to deal with the norm 
of 
$H^{- \delta}(\SS^{n-1})$ we will use \Cref{prop:norm_charts}. For this reason 
we introduce an atlas 
$\{(U_1, \varphi_1), \dots, (U_{2n}, \varphi_{2n})\}$ for $\SS^{n - 1}$. 
Consider the open subset of $\R^{n - 1}$ given by
\begin{equation}  \label{id:setV}
V = \{ y \in \R^{n - 1} : |y|^2 < (n - 1/2)/n \},
\end{equation}
and let $\phi$ denote the function  $\phi(y) = (1 - |y|^2)^{1/2}$ for all 
$y \in V$. The set $U = \{ (y, \phi(y)) \in \R^n : y \in V \}$ is an open patch 
in $\SS^{n - 1}$, and 
\[
\omega \in U \Rightarrow |e_n \cdot \omega|^2 > 1/(2n).
\label{sta:boundedness_patch}
\]
From this patch, we obtain an atlas of $\SS^{n - 1}$. To do so, consider the reflections $R_{j n}$ and $P_j$ with $j \in \{ 1, \dots, n \}$ that satisfy: 
\begin{enumerate}[label=(\roman*)]
\item $R_{j n} e_j = e_n$, $R_{j n} e_n = e_j$ and $R_{j n} e_l = e_l$ for $l \notin \{ j, n \}$,
\item $P_j e_j = - e_j$ and $P_j e_l = e_l$ for $l \neq j$.
\end{enumerate}
With this choice $R_{n n} = \id$.
The atlas that we consider consists of $2n$ charts denoted by $(U_1, \varphi_1), \dots, (U_{2n}, \varphi_{2n})$. The odd charts are defined by
\begin{equation}\label{id:odd_charts} 
\begin{split}
U_{2j - 1} &= \{ R_{jn} \omega : \omega \in U \}, 
\\ 
\varphi_{2j - 1} (\theta) &= (e_1 \cdot R_{jn}^\T \theta, \dots, e_{n-1} \cdot R_{jn}^\T \theta),\quad  \theta \in U_{2j - 1},
\end{split}
\end{equation}
with $j \in \{ 1, \dots, n \}$.
The even charts are given by
\begin{equation}\label{id:even_charts} 
\begin{split}
 U_{2j} &= \{ P_j R_{jn} \omega : \omega \in U \}, 
 \\ 
 \varphi_{2j} (\theta) &= (e_1 \cdot R_{jn}^\T P_j^\T \theta, \dots, e_{n-1} \cdot R_{jn}^\T P_j^\T \theta),  \quad   \theta \in U_{2j},
\end{split}
\end{equation}
with $j \in \{ 1, \dots, n \}$. {This choice ensures that 
$\varphi_m(U_m) =V$ for every $m \in \{1, \dots, 2n  \}$.}

These charts satisfy some simple properties that we will refer to in a later analysis. For the odd charts we have that if $j \in \{ 1, \dots, n - 1 \}$ then
\begin{equation}
\zeta \in V \Rightarrow
\begin{cases}
e_l \cdot \varphi_{2j-1}^{-1} (\zeta) = \zeta_l,&  \quad\forall l\in \{ 1, \dots, n - 1 \}\setminus\{j\}, 
\vspace{.7em}\\
e_n \cdot \varphi_{2j-1}^{-1} (\zeta) = \zeta_j.&  
\end{cases}
\label{id:odd_j}
\end{equation}
Additionally,
\begin{equation}
\zeta \in V \Rightarrow
\begin{aligned}
& e_l \cdot \varphi_{2n-1}^{-1} (\zeta) = \zeta_l\qquad\forall l\in \{ 1, \dots, n - 1 \}.
\end{aligned}
\label{id:odd_n}
\end{equation}
Moreover,
\begin{equation}
\theta \in U_{2j - 1} \Rightarrow |e_j \cdot \theta|^2 > 1/(2n),
\label{sta:boundedness_patch_odd_theta}
\end{equation}
or equivalently, 
\begin{equation}
\zeta \in {V}
\Rightarrow 
|e_j \cdot \varphi_{2j - 1}^{-1}(\zeta)|^2 > 1/(2n).
\label{sta:boundedness_patch_odd}
\end{equation}
For the even charts we similarly have that if 
$j \in \{ 1, \dots, n - 1 \}$ then
\begin{equation}
\zeta \in V \Rightarrow
\begin{cases}
 e_l \cdot \varphi_{2j}^{-1} (\zeta) = \zeta_l, & \quad \forall l\in \{ 1, \dots, n - 1 \}\setminus\{j\}, 
 \vspace{.7em}\\
 e_n \cdot \varphi_{2j}^{-1} (\zeta) = \zeta_j. &
\end{cases}
\label{id:even_j}
\end{equation}
In addition,
\begin{equation}
\zeta \in V \Rightarrow
\begin{aligned}
& e_l \cdot \varphi_{2n}^{-1} (\zeta) = \zeta_l \qquad \forall l\in \{ 1, \dots, n - 1 \}.
\end{aligned}
\label{id:even_n}
\end{equation}
Moreover,
\begin{equation}
\theta \in U_{2j} \Rightarrow |e_j \cdot \theta|^2 > 1/(2n),
\label{sta:boundedness_patch_even_theta}
\end{equation}
or equivalently, 
\begin{equation}
\zeta \in {V} 
\Rightarrow |e_j \cdot \varphi_{2j}^{-1}(\zeta)|^2 > 1/(2n).
\label{sta:boundedness_patch_even}
\end{equation}

After having introduced the atlas $\{(U_1, \varphi_1), \dots, (U_{2n}, \varphi_{2n})\}$ for $\SS^{n - 1}$, let us focus on the task of proving the limit in \eqref{lim:vanishing_Cauchy}. By \eqref{id:smooth_homogeneity} and  \Cref{prop:norm_charts} we have for all $ \lambda,t \in \R_+ $ and $s \ge 0$, 
that
\[
\begin{split}
\lambda^\alpha \| [(q_{\alpha, t+s}^{-1} & - q_{\alpha, t}^{-1}) \otimes \mathbf{1}_{\R^{n-k}}](\lambda \centerdot ) \|_{H^{- \delta}(\SS^{n-1})} \\
& = \| (q_{\alpha, \lambda(t+s)}^{-1} - q_{\alpha, \lambda t}^{-1}) \otimes \mathbf{1}_{\R^{n-k}} \|_{H^{- \delta}(\SS^{n-1})} \\
& \eqsim \sum_{i = 1}^{2n} \| [\chi_i (q_{\alpha, \lambda(t+s)}^{-1} - q_{\alpha, \lambda t}^{-1}) \otimes \mathbf{1}_{\R^{n-k}}]\circ \varphi_i^{-1} \|_{H^{-\delta}(\R^{n - 1})}.
\end{split}
\]
By \eqref{sta:boundedness_patch_odd_theta} and \eqref{sta:boundedness_patch_even_theta}, the function defined in \eqref{map:qalpha-1_function} is bounded in $U_1 \cup \dots \cup U_{2k}$, so we bound the first $2k$ terms of the previous sum as follows
\[
\begin{split}
\sum_{i = 1}^{2k} \| [\chi_i (q_{\alpha, \lambda(t+s)}^{-1} &- q_{\alpha, \lambda t}^{-1}) \otimes \mathbf{1}_{\R^{n-k}}]\circ \varphi_i^{-1} \|_{H^{-\delta}(\R^{n - 1})} \\
& \lesssim \sum_{i = 1}^{2k} \| [\chi_i (q_{\alpha, \lambda(t+s)}^{-1} - q_{\alpha, \lambda t}^{-1}) \otimes \mathbf{1}_{\R^{n-k}}]\circ \varphi_i^{-1} \|_{L^2(\R^{n - 1})} \\
& \lesssim \sum_{i = 1}^{2k} \| (q_{\alpha, \lambda(t+s)}^{-1} - q_{\alpha, \lambda t}^{-1}) \otimes \mathbf{1}_{\R^{n-k}} \|_{L^\infty (U_i)}.
\end{split}
\]
Now, by  \eqref{lim:uniform_convergence_compacts}, \eqref{sta:boundedness_patch_odd_theta} and \eqref{sta:boundedness_patch_even_theta} we have that
\[
\lim_{t \to \infty} \sum_{i = 1}^{2k} \| (q_{\alpha, \lambda(t+s)}^{-1} - q_{\alpha, \lambda t}^{-1}) \otimes \mathbf{1}_{\R^{n-k}} \|_{L^\infty (U_i)} = 0 
\]
uniformly for in $s \ge 0$. This corresponds to the first $2k$ charts. {We turn} 
now our attention to the last $2(n - k)$ charts. From the identities 
\eqref{id:odd_j}, \eqref{id:odd_n}, \eqref{id:even_j} and \eqref{id:even_n} we 
know that if $ i  \in \{ 2k + 1, \dots, 2n \}$ then
\[ (q_{\alpha, t}^{-1} \otimes \mathbf{1}_{\R^{n - k}})(\varphi_i^{-1}(\zeta)) =
(q_{\alpha, t}^{-1} \otimes \mathbf{1}_{\R^{n - k - 1}})(\zeta) \qquad \forall \, \zeta \in \varphi_i (U_i), \]
and consequently that
\[
\begin{split}
\sum_{i = 2k + 1}^{2n} \| [\chi_i (q_{\alpha, \lambda(t+s)}^{-1} &- q_{\alpha, \lambda t}^{-1}) \otimes \mathbf{1}_{\R^{n-k}}]\circ \varphi_i^{-1} \|_{H^{-\delta}(\R^{n - 1})} \\
= & \sum_{i = 2k + 1}^{2n} \|\tilde{\chi}_i (q_{\alpha, \lambda(t+s)}^{-1} - q_{\alpha, \lambda t}^{-1}) \otimes \mathbf{1}_{\R^{n-k-1}} \|_{H^{-\delta}(\R^{n - 1})},
\end{split}
\]
where $\tilde{\chi}_i(\zeta) = \chi_i \circ \varphi_i^{-1} (\zeta)$ for $\zeta \in \R^{n - 1}$. Additionally,
\[
\begin{split}
\big\|\tilde{\chi}_i (q_{\alpha, \lambda(t+s)}^{-1} &- q_{\alpha, \lambda t}^{-1}) \otimes \mathbf{1}_{\R^{n-k-1}} \big\|_{H^{-\delta}(\R^{n - 1})}
\\
& \leq \bigg( \int_{\R^{n-k-1}} \big\| \tilde{\chi}_i (\centerdot, \kappa) (q_{\alpha, \lambda(t+s)}^{-1} - q_{\alpha, \lambda t}^{-1}) \big\|_{H^{-\delta}(\R^k)}^2 \, \dd \kappa \bigg)^{1/2}.
\end{split}
\]
By \eqref{lim:H-deltaLOC}, we know that
\[ 
\lim_{t \to \infty} \sum_{i = 2k + 1}^{2n} \bigg( \int_{\R^{n-k-1}} \big\| \tilde{\chi}_i (\centerdot, \kappa) (q_{\alpha, \lambda(t+s)}^{-1} - q_{\alpha, \lambda t}^{-1})\big \|_{H^{-\delta}(\R^k)}^2 \, \dd \kappa\bigg )^{1/2} = 0 
\]
uniformly in $s \ge 0$. This corresponds to the last $2(n - k)$ charts.
Summing up, we have proved that for every $\lambda > 0$
\begin{equation} \label{lim:stronger_I}
\lim_{t \to \infty} \lambda^\alpha \big\| [(q_{\alpha, t+s}^{-1}  - q_{\alpha, t}^{-1}) \otimes \mathbf{1}_{\R^{n-k}}](\lambda \centerdot ) \big\|_{H^{- \delta}(\SS^{n-1})} = 0
\end{equation}
uniformly in $s \ge 0$, which proves \eqref{lim:vanishing_Cauchy}. 

{As a consequence of \eqref{lim:stronger_I} in the case $\lambda =1$, we can 
derive that \eqref{in:r_alpha_convergence} holds. Indeed, thanks to 
\eqref{lim:stronger_I} we have that, for every sequence 
$\{ a_N \in \R_+ : N \in \N \}$ such that $\lim_{N\to \infty} a_N = \infty$,
the sequence 
$\{ [ q_{\alpha, a_N}^{-1} \otimes \mathbf{1}_{\R^{n-k} }]( \centerdot): N \in \N \}$ 
is Cauchy in $H^{-\delta}(\SS^{n-1})$. Moreover, given two sequences 
$\{ a_N \in \R_+ : N \in \N \}$ and $\{ b_N \in \R_+ : N \in \N \}$ such that 
$\lim_{N\to \infty} a_N = \lim_{N\to \infty} b_N = \infty$, we have that the 
sequence $\{ c_N \in \R_+ : N \in \N \}$ with 
$c_{2L - 1} = a_L$ and $c_{2L} = b_L$ for 
$L \in \N $ also satisfies that $\lim_{N\to \infty} c_N = \infty$. Therefore, 
the sequence 
$\{ [ q_{\alpha, c_N}^{-1} \otimes \mathbf{1}_{\R^{n-k} }]( \centerdot): N \in \N \}$ 
is also Cauchy in $H^{-\delta}(\SS^{n-1})$. Hence, the Cauchy sequences
$\{ [ q_{\alpha, a_N}^{-1} \otimes \mathbf{1}_{\R^{n-k} }]( \centerdot): N \in \N \}$
and
$\{ [ q_{\alpha, b_N}^{-1} \otimes \mathbf{1}_{\R^{n-k} }]( \centerdot): N \in \N \}$
belong to the same equivalence class. Consequently, all these Cauchy sequences in $H^{-\delta}(\SS^{n-1})$ belong to the same equivalence class.
Let $r_\alpha$ be the equivalence class determined by  these Cauchy sequences. Then, for every sequence 
$\{ a_N \in \R_+ : N \in \N \}$ such that $\lim_{N\to \infty} a_N = \infty$, we have that
\begin{equation} \label{lim:no_lambda}
\lim_{N \to \infty}  \big\|  [ q_{\alpha, a_N}^{-1} \otimes \mathbf{1}_{\R^{n-k}}]( \centerdot)  - r_\alpha \big\|_{H^{-\delta}(\SS^{n-1})} =0.
\end{equation}
Eventually, by \eqref{id:smooth_homogeneity} we have for all $ \lambda \in \R_+$  that
\[
 \lim_{N \to \infty}  \big \|  [  \lambda^\alpha [ q_{\alpha, N}^{-1} \otimes \mathbf{1}_{\R^{n-k}}](\lambda \centerdot) - r_\alpha \big\|_{H^{-\delta}(\SS^{n-1})} = \lim_{N \to \infty} \big\|  [ q_{\alpha, \lambda N}^{-1} \otimes \mathbf{1}_{\R^{n-k}}](  \centerdot) - r_\alpha \big\|_{H^{-\delta}(\SS^{n-1})} =0 .
\]
This proves \eqref{in:r_alpha_convergence}.

Finally, we address \eqref{in:domination_sequence}. In order to show that it holds, note that the function
\[
t \in (0 , \infty) \longmapsto   \| [q_{\alpha, t}^{-1} \otimes \mathbf{1}_{\R^{n-k}}]( \centerdot ) \|_{H^{- \delta}(\SS^{n-1})}
\]
is continuous and by \eqref{lim:conv_t_small} it follows  that
\[\lim_{t \to 0} \| [q_{\alpha, t}^{-1} \otimes \mathbf{1}_{\R^{n-k}}]( \centerdot ) \|_{H^{- \delta}(\SS^{n-1})} = 0. \]
This together with \eqref{lim:no_lambda} imply that there is a constant $C$ that only depends on $k$, $n$ and $\delta$ such that 
\[
\sup_{t>0}\big \| [q_{\alpha, t}^{-1} \otimes \mathbf{1}_{\R^{n-k}}]( \centerdot )\big \|_{H^{- \delta}(\SS^{n-1})} \le C.
\]
Again the identity \eqref{id:smooth_homogeneity} implies that 
\[
\lambda^\alpha \big\| [q_{\alpha, N}^{-1} \otimes \mathbf{1}_{\R^{n-k}}](\lambda \centerdot ) \big\|_{H^{- \delta}(\SS^{n-1})} \leq C
\]
for all $\lambda \in \R_+$ and $N \in \N$.
This concludes the proof of \eqref{in:domination_sequence}.}

\subsection{The distribution \texorpdfstring{$r_\alpha$}{ralpha} on \texorpdfstring{$\SS^{n-1}$}{Sn-1}}\label{sec:ralph_S}
On the one hand, $r_\alpha$ determines a unique distribution 
on $\SS^{n - 1}$ (see \Cref{cor:distribution}). On the other hand, distributions 
on $\SS^{n - 1}$ are a family of distributions in open subsets of $\R^{n-1}$ 
associated to local charts (see \Cref{sec:distributions}). Therefore, in order to 
describe the distribution on $\SS^{n - 1}$ determined by $r_\alpha$, we only have 
to compute the distributional limit of
\[ \lim_{N \to \infty} [q_{\alpha, N}^{-1} \otimes \mathbf{1}_{\R^{n-k}}] \circ \varphi_m^{-1} \]
in $\varphi_m(U_m)$, where $\{ (U_1, \varphi_1), \dots, (U_{2n}, \varphi_{2n})\}$ 
is the atlas introduced in \Cref{sec:proof_I_II_III}.
For convenience, recall that $ \varphi_m(U_m) = V$ for all 
$m \in \{1, \dots, 2n \}$, where $V$ was defined in \eqref{id:setV}.

\begin{proposition} \sl \label{prop:r_formula}
Let $r_{\alpha, m} \in \mathcal{D}^\prime (V)$ with $m \in \{1, \dots, 2n \}$ be defined by
\[
\langle r_{\alpha, m}, \phi \rangle = 
\begin{cases}\displaystyle
 \int_{V} \frac{\phi(\zeta)}{q_\alpha \otimes \mathbf{1}_{\R^{n-k}} (\varphi_m^{-1}(\zeta))}  \, \dd  \zeta,&\quad   1 \leq m \leq 2k, 
\vspace{.7em}\\
 \langle q_\alpha^{-1} \otimes \mathbf{1}_{\R^{n-k-1}}, \phi \rangle,&\quad  	 2k < m \leq 2n,
\end{cases}
\]
for all $\phi \in \mathcal{D} (V) $, where 
$q_\alpha \otimes \mathbf{1}_{\R^{n-k}} (\xi) = q_\alpha (e_1 \cdot \xi,
\dots, e_k \cdot \xi)$
for all $\xi \in \R^n$. Then, the sequence 
$ \{ r_\alpha^{N} \in C^\infty (\SS^{n-1}) : N \in \N\}$ given by
\[
 r_\alpha^N (\theta) = (q_{\alpha, N}^{-1} \otimes \mathbf{1}_{\R^{n - k}}) (\theta), \qquad  \theta \in \SS^{n - 1} ,
 \]
satisfies that
\begin{equation}
\lim_{N \to \infty} \int_{V} r_\alpha^{N}(\varphi_m^{-1}(\zeta)) \phi(\zeta) \, \dd \zeta = \langle r_{\alpha, m}, \phi \rangle,
\label{lim:rNalpha}
\end{equation}
for all $\phi \in \mathcal{D} (V) $ and $m \in \{1, \dots, 2n  \}$.
\end{proposition}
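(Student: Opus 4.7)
My plan is to split the analysis into two cases according to whether $m\in\{1,\dots,2k\}$ or $m\in\{2k+1,\dots,2n\}$, exploiting the explicit formulas for the local inverses $\varphi_m^{-1}$ recorded in \eqref{id:odd_j}--\eqref{id:even_n} together with the convergence properties \eqref{lim:uniform_convergence_compacts} and \eqref{lim:tempered_distributions} of the mollified family $q_{\alpha,N}^{-1}$ established in \Cref{sec:extension}. By construction, $r_\alpha^N(\varphi_m^{-1}(\zeta))$ equals $q_{\alpha,N}^{-1}(e_1\cdot\varphi_m^{-1}(\zeta),\dots,e_k\cdot\varphi_m^{-1}(\zeta))$, and the identification of the appropriate limit hinges on describing this map chart by chart.

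For the range $1\le m\le 2k$, I write $m\in\{2j-1,2j\}$ with $j\in\{1,\dots,k\}$. The lower bounds \eqref{sta:boundedness_patch_odd} and \eqref{sta:boundedness_patch_even} ensure that the $j$-th entry of $\varphi_m^{-1}(\zeta)$ is bounded below in absolute value by $1/\sqrt{2n}$ throughout $V$. Given $\phi\in\mathcal{D}(V)$, the set
\[
K_m=\{(e_1\cdot\varphi_m^{-1}(\zeta),\dots,e_k\cdot\varphi_m^{-1}(\zeta)):\zeta\in\supp\phi\}
\]
is therefore a compact subset of $\R^k\setminus\{0\}$. Applying \eqref{lim:uniform_convergence_compacts} on $K_m$ yields $q_{\alpha,N}^{-1}\to 1/q_\alpha$ uniformly on $K_m$ as $N\to\infty$, and passing to the limit under the integral sign produces \eqref{lim:rNalpha} with the formula for $r_{\alpha,m}$ stated in this range.

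For the range $2k<m\le 2n$ the index $j$ associated with the chart satisfies $j>k$. Inspection of \eqref{id:odd_j}, \eqref{id:odd_n}, \eqref{id:even_j} and \eqref{id:even_n} reveals that the first $k$ entries of $\varphi_m^{-1}(\zeta)$ coincide with $\zeta_1,\dots,\zeta_k$ for every $\zeta\in V$, so that $r_\alpha^N(\varphi_m^{-1}(\zeta))=q_{\alpha,N}^{-1}(\zeta_1,\dots,\zeta_k)$. For $\phi\in\mathcal{D}(V)$, Fubini reduces the integral on the left of \eqref{lim:rNalpha} to $\int_{\R^k}q_{\alpha,N}^{-1}(\eta)\,\tilde\phi(\eta)\,\dd\eta$, where $\tilde\phi(\eta)=\int_{\R^{n-1-k}}\phi(\eta,\kappa)\,\dd\kappa$ lies in $\mathcal{D}(\R^k)\subset\mathcal{S}(\R^k)$. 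Invoking \eqref{lim:tempered_distributions} with test function $\tilde\phi$ yields convergence to $\langle q_\alpha^{-1},\tilde\phi\rangle$, which by the definition of the tensor product equals $\langle q_\alpha^{-1}\otimes\mathbf{1}_{\R^{n-k-1}},\phi\rangle=\langle r_{\alpha,m},\phi\rangle$.

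The main obstacle, such as it is, lies in the regime $1\le m\le 2k$: one must ensure that $\varphi_m^{-1}(\supp\phi)$ projects under $\theta\mapsto(e_1\cdot\theta,\dots,e_k\cdot\theta)$ to a compact subset of $\R^k\setminus\{0\}$, since \eqref{lim:uniform_convergence_compacts} breaks down without this quantitative separation from the origin. The coordinate choices built into the atlas of \Cref{sec:proof_I_II_III}, in particular the bounds \eqref{sta:boundedness_patch_odd} and \eqref{sta:boundedness_patch_even}, are designed precisely so that this separation holds uniformly in $V$, and the remaining case follows from the already established distributional convergence of $q_{\alpha,N}^{-1}$ to $q_\alpha^{-1}$ in $\mathcal{S}^\prime(\R^k)$.
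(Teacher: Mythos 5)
Your proof is correct and follows essentially the same approach as the paper's: split into the cases $1\le m\le 2k$ and $2k<m\le 2n$, invoke uniform convergence on compact subsets avoiding the origin (from \eqref{lim:uniform_convergence_compacts} together with \eqref{sta:boundedness_patch_odd} and \eqref{sta:boundedness_patch_even}) in the first case, and invoke the chart identities \eqref{id:odd_j}--\eqref{id:even_n} followed by Fubini and the tempered-distribution convergence \eqref{lim:tempered_distributions} in the second. The only cosmetic difference is that the paper takes uniform convergence over $\overline{U_m}$ while you work with the compact projection $K_m$ of $\supp\phi$; both are valid and the argument is otherwise identical.
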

Note that the integral in $V$ for 
$1 \leq m \leq 2k$ is finite for all $\phi \in \mathcal{D} (V) $ since 
\eqref{sta:boundedness_patch_odd} and \eqref{sta:boundedness_patch_even} imply 
$|q_\alpha \otimes \mathbf{1}_{\R^{n-k}} (\varphi_m^{-1}(\zeta))| > 0$ for all
$\zeta \in V$.

\begin{proof}
{By definition,
\[ \int_V r_\alpha^{ N}(\varphi_m^{-1}(\zeta)) \phi(\zeta) \, \dd \zeta = \int_V (q_{\alpha,   N}^{-1} \otimes \mathbf{1}_{\R^{n - k}})(\varphi_m^{-1}(\zeta)) \phi(\zeta) \, \dd \zeta. \]
From \eqref{lim:uniform_convergence_compacts}, 
\eqref{sta:boundedness_patch_odd_theta} and 
\eqref{sta:boundedness_patch_even_theta} we know that whenever $1 \leq m \leq 2k$
\[\lim_{N \to \infty} \sup_{\theta \in \overline{U_m}} \Big| \frac{1}{q_\alpha \otimes \mathbf{1}_{\R^{n-k}} (\theta)} - (q_{\alpha,   N}^{-1} \otimes \mathbf{1}_{\R^{n - k}}) (\theta)\Big| = 0,\]
and consequently
\[
\lim_{N \to \infty} \int_V r_\alpha^{  N}(\varphi_m^{-1}(\zeta)) \phi(\zeta) \, \dd \zeta = \int_V \frac{\phi(\zeta)}{q_\alpha \otimes \mathbf{1}_{\R^{n-k}} (\varphi_m^{-1}(\zeta))} \, \dd \zeta.
\]
This proves \eqref{lim:rNalpha} whenever $1 \leq m \leq 2k$.
We focus now on the case $2k < m \leq 2n$. From the identities \eqref{id:odd_j}, 
\eqref{id:odd_n}, \eqref{id:even_j} and \eqref{id:even_n} we know that
\[ (q_{\alpha,  N}^{-1} \otimes \mathbf{1}_{\R^{n - k}})(\varphi_m^{-1}(\zeta)) =
(q_{\alpha, N}^{-1} \otimes \mathbf{1}_{\R^{n - k - 1}})(\zeta) \quad \forall \, \zeta \in V, \]
and consequently that
\[
\begin{split}
\int_V r_\alpha^{  N}(\varphi_m^{-1}(\zeta)) \phi(\zeta) \, \dd \zeta &= \int_V (q_{\alpha,   N}^{-1} \otimes \mathbf{1}_{\R^{n - k - 1}})(\zeta) \phi(\zeta) \, \dd \zeta
\\
& = \int_{\R^k} q_{\alpha,   N}^{-1}(\eta) \Big( \int_{\R^{n-k-1}} \phi(\eta, \kappa) \, \dd \kappa \Big) \, \dd \eta.
\end{split}
\]
By \eqref{lim:tempered_distributions}, we know that
\[
\begin{split}
\lim_{n \to \infty} \int_{\R^k} q_{\alpha,   N}^{-1}(\eta) & \Big( \int_{\R^{n-k-1}} \phi(\eta, \kappa) \, \dd \kappa \Big) \, \dd \eta = \langle q_\alpha^{-1}, \int_{\R^{n-k-1}} \phi(\centerdot, \kappa) \, \dd \kappa \rangle
\\
& = \langle q_\alpha^{-1} \otimes \mathbf{1}_{\R^{n-k-1}}, \phi \rangle.
\end{split}
\]
This means that
\[
\lim_{n \to \infty} \int_V r_\alpha^{  N}(\varphi_m^{-1}(\zeta)) \phi(\zeta) \, \dd \zeta = \langle q_\alpha^{-1} \otimes \mathbf{1}_{\R^{n-k-1}}, \phi \rangle, 
\]
which proves \eqref{lim:rNalpha} when $2k < m \leq 2n$.
This completes the proof of \Cref{prop:r_formula} for every $m \in \{1, \dots, 2n  \}$.}
\end{proof}

\section{Proofs of Theorems \texorpdfstring{\ref{th:loss_of_derivatives}}{loss} and \texorpdfstring{\ref{th:cancellation_counterexample}}{counter-ex}}
\label{sec:counterexamples}
This section contains two subsections devoted to the proofs of Theorem
\ref{th:loss_of_derivatives} and \ref{th:cancellation_counterexample}, 
respectively.

\subsection{Loss of derivatives when averaging in \texorpdfstring{$L^2(\SO(n))$}{L2(SO(n))}}
\label{sec:loss_of_derivatives}

In this section we prove {\Cref{th:loss_of_derivatives}}, which asserts 
a loss of  at least $s$ derivatives for the $k$-plane Riesz potential of order 
$\alpha $ with $s = \alpha - k/2 $ if $\alpha > k/2$ and $s > 0 $ if
$\alpha = k/2$. Before providing the explicit expression of the sequence
$\{ f_N : N \in \N \}$ stated in that theorem, note that by 
identities \eqref{id:solution_map_multiplier} and \eqref{id:mAlphaK} with $p_\alpha (\xi, Q) = (|\xi \cdot Q e_1|^2 + \dots + |\xi \cdot Q e_k|^2)^{\alpha/2}$ we have
\[
\begin{split}
\int_{\SO(n)} & \| (-\Delta)^{\alpha/2} I_\alpha  f_N(\centerdot,Q) \|^2_{L^2 (B_R)} \, \dd \mu(Q) 
\\
& = \int_{\SO(n)} \int_{B_R}  \Big| \frac{1}{(2\pi)^{n/2}} \int_{\R^n} e^{\i x \cdot \xi} m_\alpha (\xi, Q) \widehat{f_N}(\xi) \, \dd \xi \Big|^2 \, \dd x \, \dd \mu(Q)
\\
& = \int_{B_R} \int_{\R^n \times \R^n} e^{\i x \cdot (\xi - \eta)} K_\alpha (\xi, \eta) \widehat{f_N}(\xi) \overline{\widehat{f_N}(\eta)} \, \dd (\xi, \eta) \, \dd x,
\end{split}
\]
where
\[
K_\alpha (\xi, \eta) = \frac{1}{(2\pi)^n} \int_{\SO(n)} m_\alpha (\xi, Q) m_\alpha (\eta, Q) \, \dd \mu(Q),\qquad (\xi,\eta)\in\R^n\times \R^n.
\]
Since $x$ is localized in $B_R$, we will localize $\xi - \eta$ in a ball 
of radius of the order $1/R$ so that $e^{ix \cdot (\xi - \eta)}$
does not oscillate. Additionally, we want $\widehat{f_N}$ to be suitably supported on the high frequencies of the order $N$. Thus we choose
\[
\widehat{f_N}(\xi) = R^{n/2} \mathbf{1}_{B(Ne_1; \pi/(8R))}(\xi), \qquad \xi \in \R^n. 
\]
Here $\mathbf{1}_{B(Ne_1; \pi/(8R))}(\xi)$ denotes the characteristic function of the ball $B(Ne_1; \pi/(8R))$
centred at $Ne_1$ and {having} radius $\pi/(8R)$.
{The normalization factor  $R^{n/2}$  {easily implies the estimate}}
$\| f_N \|_{\dot{H}^s(\R^n)} \eqsim N^s $ whenever $N > \pi/(4R)$.
With this choice for $f_N$ we have that
\[
\begin{split}
\int_{\SO(n)} & \| (-\Delta)^{\alpha/2} I_\alpha  f_N(\centerdot,Q) \|^2_{L^2 (B_R)} \, \dd \mu(Q)
 \\
& = R^n \int_{B_R} \int_{\Sigma_N^R} \cos (x \cdot (\xi - \eta)) K_\alpha (\xi, \eta) \, \dd (\xi, \eta) \, \dd x,
\end{split}
\]
where $\Sigma_N^R = B(Ne_1; \pi/(8R)) \times B(Ne_1; \pi/(8R)) $. Because 
of the localization $x \in B_R$ and $(\xi, \eta) \in \Sigma_N^R$, we have 
that $\cos (x \cdot (\xi - \eta)) \geq 1/\sqrt{2}$ and consequently
\[
\int_{\SO(n)} \| (-\Delta)^{\alpha/2} I_\alpha  f_N(\centerdot,Q) \|^2_{L^2 (B_R)} \, \dd \mu(Q) \gtrsim R^{2n} \int_{\Sigma_N^R} K_\alpha (\xi, \eta) \, \dd (\xi, \eta).
\]
By the translations $\zeta = \xi - Ne_1 $ and $\kappa = \eta - N e_1$, 
the right-hand side of the previous inequality {is equal to}
\[ R^{2n} \int_{\Sigma_0^R} K_\alpha (Ne_1 + \zeta, Ne_1 + \kappa) \, \dd (\zeta, \kappa). \]
The fact that $\zeta \in B(0; \pi/(8R)) $ {implies} that
\[m_\alpha (Ne_1 + \zeta, Q) \gtrsim \frac{1}{(|e_1 \cdot Q e_1|^2 + \dots + |e_1 \cdot Q e_k|^2 + (RN)^{-2})^{\alpha/2}} \]
whenever $N > \pi / (2R)$. Thus,
\[
\begin{split}
R^{2n} \int_{\Sigma_0^R} & K_\alpha (Ne_1 + \zeta, Ne_1 + \kappa) \, \dd (\zeta, \kappa) \\
& \gtrsim \int_{\SO(n)} \frac{1}{(|e_1 \cdot Q e_1|^2 + \dots + |e_1 \cdot Q e_k|^2 + (RN)^{-2})^\alpha} \, \dd \mu(Q) \\
& \eqsim \int_{\SS^{n - 1}} \frac{1}{(|\theta \cdot e_1|^2 + \dots + |\theta \cdot e_k|^2 + (RN)^{-2})^\alpha} \, \dd S(\theta).
\end{split}
\]
{In the last step we have used the well known identity for the Haar measure stated  in \Cref{prop:SOn_Sn-1}.} Finally, in order to {complete} the proof of {\Cref{th:loss_of_derivatives}} we {show} that the integral on the sphere has a lower bound of the order $\mathfrak{g} (RN)$ whenever $N > \pi/(2 R)$.

{In the statement of the lemma below we remember  that the growth function $\mathfrak{g}$ is defined as 
$\mathfrak{g} (t) = \log t$ if $\alpha = k/2$ and 
$\mathfrak{g} (t) = t^{2 \alpha - k}$ if $\alpha > k/2$.}

\begin{lemma}\label{lem:growth}\sl We have that
\[\int_{\SS^{n - 1}} \frac{1}{(|\theta \cdot e_1|^2 + \dots + |\theta \cdot e_k|^2 + \varepsilon^2)^\alpha} \, \dd S(\theta) \gtrsim \mathfrak{g} (1/\varepsilon) \]
whenever $\varepsilon < 1$.
\end{lemma}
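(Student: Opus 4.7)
The integrand depends on $\theta$ only through $r := (|\theta\cdot e_1|^2 + \dots + |\theta\cdot e_k|^2)^{1/2}$, namely the Euclidean norm of the orthogonal projection of $\theta$ onto the first $k$ coordinates. My plan is therefore to reduce the surface integral to a one-dimensional integral in $r \in [0,1]$ via bipolar coordinates, and then perform a scaling argument.

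More precisely, I will write $\theta = (r\omega', \sqrt{1-r^2}\,\omega'')$ with $\omega' \in \SS^{k-1}$, $\omega'' \in \SS^{n-k-1}$, and $r\in[0,1]$. A standard computation of the surface measure in these coordinates gives
\[
\dd S(\theta) = r^{k-1}(1-r^2)^{(n-k-2)/2} \, \dd r \, \dd S(\omega') \, \dd S(\omega''),
\]
so that
\[
\int_{\SS^{n-1}} \frac{\dd S(\theta)}{(|\theta \cdot e_1|^2 + \dots + |\theta \cdot e_k|^2 + \varepsilon^2)^\alpha} \,=\, C_{n,k}\int_0^1 \frac{r^{k-1}(1-r^2)^{(n-k-2)/2}}{(r^2+\varepsilon^2)^\alpha}\,\dd r,
\]
with $C_{n,k} = S(\SS^{k-1})\, S(\SS^{n-k-1}) > 0$. (One checks that this formula makes sense also when $n=k+1$, where the exponent $(n-k-2)/2 = -1/2$ yields an integrable singularity at $r=1$, irrelevant for our lower bound.)

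I will then restrict the integration to $r\in[0,1/2]$, where $(1-r^2)^{(n-k-2)/2}$ is uniformly bounded below by a positive constant depending only on $n,k$. After the change of variables $r = \varepsilon u$ the problem reduces to showing
\[
\varepsilon^{k-2\alpha}\int_0^{1/(2\varepsilon)} \frac{u^{k-1}}{(u^2+1)^\alpha}\,\dd u \,\gtrsim\, \mathfrak{g}(1/\varepsilon).
\]

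The final step splits into the two cases of the definition of $\mathfrak g$. If $\alpha = k/2$ then $\varepsilon^{k-2\alpha} = 1$ and the integrand behaves like $u^{-1}$ for large $u$, which yields the logarithmic growth $\log(1/\varepsilon)$ (for $\varepsilon$ close to $1$ the claim is trivial since both sides are comparable to a positive constant). If $\alpha > k/2$, then $\alpha < k$ by hypothesis ensures $u^{k-1}/(u^2+1)^\alpha$ is integrable at $0$ and at $\infty$, so the one-dimensional integral tends to a positive constant as $\varepsilon \to 0$, and the prefactor $\varepsilon^{k-2\alpha} = (1/\varepsilon)^{2\alpha-k}$ gives the required power growth. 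There is no real obstacle: the only bookkeeping to watch is the coarea / bipolar measure computation and handling the borderline case $n=k+1$ of the weight $(1-r^2)^{(n-k-2)/2}$, both of which are elementary.
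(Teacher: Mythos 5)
Your proof is correct, and it takes a genuinely different route from the paper. The paper reuses the explicit $2n$-chart atlas on $\SS^{n-1}$ that it had set up for the proof of claim~\eqref{lim:vanishing_Cauchy}: it discards the $2k$ charts on which $|\theta\cdot e_j|^2>1/(2n)$ for some $j\le k$ (there the integrand is bounded), writes out the remaining charts in the graph coordinates \eqref{id:odd_j}--\eqref{id:even_n}, and arrives at a lower bound by a $k$-dimensional integral $\int_{W'} (|y'|^2+\varepsilon^2)^{-\alpha}\,\dd y'$ which it then evaluates in spherical coordinates. You instead go straight to bipolar coordinates $\theta = (r\omega',\sqrt{1-r^2}\,\omega'')$, exploiting the rotational symmetry of the integrand in the last $n-k$ variables, which collapses everything to a one-dimensional integral in $r$ in a single step, and then a scaling $r=\varepsilon u$ finishes the job. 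Both arguments reduce to the same radial integral near the singular subsphere; yours is a bit more self-contained and avoids depending on the particular atlas and partition of unity constructed earlier in the paper, which the paper happened to have at hand anyway. One small imprecision: you say that $\alpha<k$ ensures integrability of $u^{k-1}/(u^2+1)^\alpha$ near $u=0$; in fact that integrability holds for every $\alpha$ simply because $k\geq1$, and the relevant hypothesis is $\alpha>k/2$, which controls the tail at $u=\infty$. This does not affect the correctness of the argument.
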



\begin{proof}
{To prove this lemma we could use two charts consisting on half of the sphere
each one. However, for simplicity we use again  the same atlas introduced in \eqref{id:setV},\eqref{id:odd_charts} and \eqref{id:even_charts}.}
Let $\{ \chi_1, \dots, \chi_{2n} \} $  be
a partition of unity subordinated to this atlas such that
$\chi_j (\varphi_j^{-1}(y)) \geq c > 0$ whenever $|y|^2 \leq (n - 1)/n$. {By \eqref{id:setV} this condition can be satisfied for all $y\in V$}.
We start by neglecting the charts where the {integrand} is bounded
\[
\begin{split}
\int_{\SS^{n - 1}} & \frac{1}{(|\theta \cdot e_1|^2 + \dots + |\theta \cdot e_k|^2 + \varepsilon^2)^\alpha} \, \dd S(\theta) \\
& \geq \sum_{j = 2k + 1}^{2n} \int_{\SS^{n - 1}} \frac{\chi_j(\theta)}{(|\theta \cdot e_1|^2 + \dots + |\theta \cdot e_k|^2 + \varepsilon^2)^\alpha} \, \dd S(\theta).
\end{split}
\]
{As mentioned, we have} neglected the first $2k$ charts because 
$|\theta \cdot e_j|^2 > 1/(2n)$ for all $\theta \in U_{2j - 1} \cup U_{2j}$ 
and $j \in \{ 1, \dots, k \}$.
{Expressing the  remaining integrals explicitly in coordinates} the right-hand 
side of the previous inequality is bounded below by
\[2(n - k)c \int_W \frac{(1 + |\nabla \phi(y)|^2)^{1/2}}{(|y^\prime|^2 + \varepsilon^2)^\alpha} \, \dd y \gtrsim \int_{W^\prime} \frac{1}{(|y^\prime|^2 + \varepsilon^2)^\alpha} \, \dd y^\prime \]
where $W = \{ y \in \R^{n - 1} : |y|^2 \leq (n - 1)/n \} \subset V$, 
$y = (y^\prime, y^{\prime\prime}) \in \R^k \times \R^{n - 1 - k}$ and
$W^\prime = \{ y^\prime \in \R^k : |y^\prime|^2 \leq (n - 1)/(2n) \}$. 
Finally, changing to spherical coordinates in $\R^k$, we see that right-hand
side of the last inequality {is of the order
$\mathfrak{g} (1/\varepsilon)$ when  $\varepsilon\to 0$}.
\end{proof}

\subsection{The cancellation property \texorpdfstring{\eqref{it:cancellation}}{cancellation} is necessary}
In this section we prove \Cref{th:cancellation_counterexample}, which implies the 
need of the cancellation property \eqref{it:cancellation} to ensure the maximum 
gain in the rotational smoothing.

Let $f \in \mathcal S(\R^n)$ be any function with $\| f \|_{L^2(\R^n)} =1$ such that $\widehat{f}$ has compact support in the set $\{\xi \in \R^n : 1< |\xi| < 2\}$. We then define $f_N$ so that
\[\widehat{f}_N(\xi) = N^{-n/2} \widehat{f}  (\xi/N) \]
holds. Thus, the support of $\widehat{f}_N $ is contained in the set $\{\xi \in \R^n : N < |\xi| < 2N\}$.

Consider the operator $S_{\alpha,t}$ defined in \eqref{id:Salpha}.  Then by a 
straightforward change of variables, and by the discussion in 
\Cref{sec:proof-mainTH} one has that
\begin{multline}  \label{id:S_alpha_t}
\| S_{\alpha,t} f_N \|_{H^{-\delta} (\SO(n); \dot{H}^\alpha(\R^n))}^2 =
\int_{\R^n} |\xi|^{2\alpha} \| [q_{\alpha, t}^{-1} \otimes \mathbf{1}_{\R^{n-k}}](|\xi| \centerdot ) \|_{H^{- \delta}(\SS^{n-1})}^2 |\widehat{f}_N(\xi)|^2 \, \dd \xi \\
=
\int_{ 1<|\xi|<2} N^{2\alpha} |\xi|^{2\alpha}   \| [q_{\alpha, t}^{-1} \otimes \mathbf{1}_{\R^{n-k}}](N|\xi| \centerdot ) \|_{H^{- \delta}(\SS^{n-1})}^2 |\widehat{f}(\xi)|^2 \, \dd \xi .
\end{multline}
From \eqref{eq:last_L} one gets, for all $\phi \in \mathcal S(\R^k)$, that
\begin{equation} \label{id:nonhomogeneous}
\langle q_\alpha^{-1},  \phi\rangle  = \lambda^{-\alpha} \langle q_\alpha^{-1},    \lambda^k \phi(\lambda\centerdot) \rangle  + \lambda^{-\alpha} \log \lambda \langle d_\alpha,    \lambda^k \phi(\lambda\centerdot) \rangle,
\end{equation}
where
\[
\langle d_\alpha,   \phi \rangle =  \int_{\R^k} \frac{1}{q_\alpha(\zeta)} (\zeta \cdot \nabla)^{\alpha - k + 1}   \phi  (\zeta) \, \dd \zeta.
\]  
Consider $q_{\alpha,t}^{-1}$ as defined in \eqref{id:qalphaN-1}, and let $d_{\alpha,t} (\eta) = \langle d_\alpha,  \psi_t (\eta - \centerdot) \rangle$ with 
$\psi_t (\eta) = t^k \psi (t \eta)$ $\eta \in \R^k$ and $t >0$.
For all $\eta \in \R^k$, $t >0$ and 
$\lambda >0$ we have by   \eqref{id:nonhomogeneous}  that
\[
\begin{split}
\lambda^\alpha q_{\alpha, t}^{-1} (\lambda \eta) &= \lambda^\alpha \langle q_\alpha^{-1}, \psi_t ( \lambda \eta - \centerdot) \rangle =
  \langle q_\alpha^{-1},    \lambda^k \psi_t ( \lambda \eta - \lambda \centerdot) \rangle  
  +   \log \lambda \langle d_\alpha,    \lambda^k \psi_t ( \lambda \eta - \lambda \centerdot) \rangle
\\
& =
  \langle q_\alpha^{-1},     \psi_{\lambda t} (  \eta -   \centerdot) \rangle  
  +   \log \lambda \langle d_\alpha,     \psi_{\lambda t} (  \eta -   \centerdot) \rangle = q_{\alpha, \lambda t}^{-1} (\eta) + \log \lambda \, d_{\alpha,\lambda t}(\eta).
\end{split}
\]
Thus, for every $\lambda ,t \in \R_+$ and $\theta\in\SS^{n-1}$ we have that
\begin{equation} \label{id:qasi_homogeneous}
 \lambda^\alpha [q_{\alpha, t}^{-1} \otimes \mathbf{1}_{\R^{n-k}}](\lambda \theta ) = [q_{\alpha, \lambda t}^{-1} \otimes \mathbf{1}_{\R^{n-k}}](\theta ) + \log \lambda [d_{\alpha, \lambda t}  \otimes \mathbf{1}_{\R^{n-k}}](\theta ).
\end{equation}

We now claim that 
\begin{equation} \label{eq:q_cauchy}
\lim_{t \to \infty} \big\| (q_{\alpha, t+s}^{-1}  - q_{\alpha, t}^{-1}) \otimes \mathbf{1}_{\R^{n-k}} \big\|_{H^{- \delta}(\SS^{n-1})} = 0,
 \end{equation}
 and
 \begin{equation} \label{eq:d_cauchy}
\lim_{t \to \infty}  \big\| (d_{\alpha, t+s}  - d_{\alpha, t}) \otimes \mathbf{1}_{\R^{n-k}} \big\|_{H^{- \delta}(\SS^{n-1})} = 0,
 \end{equation}
 uniformly for $s\ge 0$. The proof of this claim is, in both cases,  exactly the same as the proof of \eqref{lim:stronger_I} with $\lambda =1$, so we omit it.  
As a consequence of this, there exist some constants  $A>0$ and $B>0$ independent of $t$ and $\lambda$ such that
\begin{equation} \label{eq:claim_AB}
\| q_{\alpha, \lambda t}^{-1} \otimes \mathbf{1}_{\R^{n-k}}   \|_{H^{- \delta}(\SS^{n-1})} \le A \quad \text{and} \quad \| d_{\alpha, \lambda t} \otimes \mathbf{1}_{\R^{n-k}}  \|_{H^{- \delta}(\SS^{n-1})} \ge B
\end{equation}
for all $t>1$ and $\lambda\ge \lambda_0$ for a large enough $\lambda_0>0$. Notice that one needs \eqref{it:cancellation}  to be false in order to get the second estimate, since this guarantees that $d_\alpha$  and its restriction  to the sphere defined by \eqref{eq:d_cauchy} are non-vanishing (see the proof of \Cref{lem:pos_integers}).

From the previous discussion with   $\lambda = |\xi|N$  we get that  
\begin{align*} 
N^{2\alpha} |\xi|^{2\alpha}   \| [q_{\alpha, t}^{-1} &\otimes \mathbf{1}_{\R^{n-k}}](N|\xi| \centerdot ) \|_{H^{- \delta}(\SS^{n-1})}^2 \\
&=    \| q_{\alpha, N|\xi| t}^{-1} \otimes \mathbf{1}_{\R^{n-k}} + \log (|\xi|N) \, d_{\alpha, N|\xi| t}  \otimes \mathbf{1}_{\R^{n-k}}   \|_{H^{- \delta}(\SS^{n-1})}^2 \\
&\ge   \left( B \log (|\xi|N)  -  A \right)^2 \ge \left( B\log N  -  A \right)^2 \gtrsim (\log N)^2
\end{align*}
for all $1<|\xi|<2$, $t>1$, and for all $N \ge N_0$ for a large enough $N_0 \in \N$ independent of $\xi$ and $t$.  Combining this with \eqref{id:S_alpha_t}  yields
\[ 
\| S_{\alpha,t} f_N \|_{H^{-\delta} (\SO(n); \dot{H}^\alpha(\R^n))} \gtrsim \log N  ,
\]
 for all $t>1$ and $N\ge N_0$.  Since the previous estimate is uniform in $t$, to conclude the proof of the theorem it is enough to justify that we can pass to the limit $t$ tends to $\infty$. We now do this. First observe that \eqref{id:qasi_homogeneous},\eqref{eq:q_cauchy} and \eqref{eq:d_cauchy} imply that for any fixed $\lambda>0$
 \begin{equation*} 
\lambda^\alpha \lim_{t \to \infty} \big\| (q_{\alpha, t+s}^{-1}  - q_{\alpha, t}^{-1}) \otimes \mathbf{1}_{\R^{n-k}}(\lambda \centerdot) \big\|_{H^{- \delta}(\SS^{n-1})} = 0
 \end{equation*}
uniformly in $s\ge 0$. Hence, if $M,L \in \N$,
 \[
\begin{split}
&\lim_{M \to 0} S(\SS^{n-1})  \| S_{\alpha, M+L} f_N - S_{\alpha, M} f_N \|_{H^{-\delta}(\SO(n); \dot{H}^\alpha(\R^n))}^2
\\
&= \int_{N<|\xi|<2N} \lim_{M\to 0} |\xi|^{2 \alpha} \| [(q_{\alpha, |\xi|(M+L)}^{-1} - q_{\alpha, |\xi|M}^{-1}) \otimes \mathbf{1}_{\R^{n-k}}](|\xi| \centerdot ) \|_{H^{- \delta}(\SS^{n-1})}^2 |\widehat{f}_N(\xi)|^2 \, \dd \xi =0,
\end{split}
\]
for $N \ge N_0$. Since $|\xi| \simeq N$ in the domain of the integral, here the 
use of the dominated convergence theorem is allowed for $N$ large enough by the 
first estimate of \eqref{eq:claim_AB}. 
Therefore $\{S_{\alpha, M} f_N : M\in \N \}$ is a Cauchy sequence for each fixed 
$N\ge N_0$ so,  by the same  uniqueness on limit arguments explained at 
\Cref{sec:proof-mainTH}, we conclude that
\[ 
\| S_{\alpha} f_N \|_{H^{-\delta} (\SO(n); \dot{H}^\alpha(\R^n))}  = \lim_{M\to \infty}\| S_{\alpha,M} f_N \|_{H^{-\delta} (\SO(n); \dot{H}^\alpha(\R^n))} \gtrsim \log N  ,
\]
for all $N \ge N_0$. This concludes the proof of 
\Cref{th:cancellation_counterexample}.

\section{The functional analytical framework}\label{sec:abstract_framework} 
{In this section we adopt a rather abstract point of view and establish the 
analytical framework  that we have used in order to state and prove
\Cref{th:0alphak,th:kalpha}. The section is divided in two, 
\Cref{sec:distributions,sec:sobolev}. In 
\Cref{sec:distributions} we recall the notion of distributions on smooth 
manifolds and prove a technical lemma that will be very useful in 
\Cref{sec:sobolev}. In \Cref{sec:sobolev} we give an intrinsic description of 
fractional Sobolev spaces on compact  boundaryless Riemannian manifolds. Then, we 
introduce the mixed-norm Sobolev spaces that we have used in 
\Cref{th:0alphak,th:kalpha} in order to quantify the effect of rotational 
smoothing.}

\subsection{Distributions on a smooth manifolds}\label{sec:distributions} We begin by recalling the definition of a distribution on a smooth manifold $M$.
For convenience, if $U$ and $V$ are open subsets  of $\R^m$ with $m \in \N$, $U \subset V$ and $\phi \in \mathcal{D}(U)$, then we will still write $\phi$ to denote the extension of this function to $V$, defined to be zero in $V \setminus U$. In the same way, if $u \in \mathcal{D}^\prime (V)$ we still write $u$ to refer to the distribution $\phi \in \mathcal{D}(U) \mapsto \langle u, \phi \rangle $ that belongs to $\mathcal{D}^\prime (U)$. Let $(M, \mathcal{A})$ be a smooth manifold where $M$ is a topological manifold
of dimension $m \in \N$ and $\mathcal{A}$ is a maximal smooth atlas.
Assume that for every chart 
$(U_\alpha, \varphi_\alpha)\in \mathcal{A}$ there is a distribution $u_\alpha $ 
in $\varphi_\alpha (U_\alpha) \subset \R^m $. If for every 
$(U_\alpha, \varphi_\alpha)$ and $(U_\beta, \varphi_\beta)$ in $ \mathcal{A}$ 
with $U_\alpha \cap U_\beta \neq \emptyset$ we have that
\begin{equation}
\langle u_\alpha, \phi \rangle = \langle (\varphi_\beta \circ 
\varphi_\alpha^{-1})^\ast u_\beta, \phi \rangle\qquad \forall\, \phi \in 
\mathcal{D}(\varphi_\alpha (U_\alpha \cap U_\beta)),
\label{id:transition}
\end{equation}
then the family of all distributions $u_\alpha$ is said to be 
a distribution $u \in \mathcal{D}^\prime (M)$.
Given a distribution $u\in \mathcal{D}^\prime (M)$ it is convenient to denote the 
corresponding $u_\alpha$ by $u \circ \varphi_\alpha^{-1}$.
According to \cite[Theorem 6.3.4]{zbMATH01950198} a distribution
$u \in \mathcal{D}^\prime (M)$ 
is determined only by the family of distributions $u_\alpha$ corresponding to the 
charts $(U_\alpha, \varphi_\alpha)$ 
in an atlas $\mathcal{B}$ that is not necessarily maximal. More precisely, assume
that $\mathcal{B} \subset \mathcal{A}$ is an atlas and that for every chart 
$(U_\alpha, \varphi_\alpha)\in \mathcal{B}$ there is a distribution $u_\alpha $ 
in $\varphi_\alpha (U_\alpha) \subset \R^m $. If for every 
$(U_\alpha, \varphi_\alpha)$ and $(U_\beta, \varphi_\beta)$ in $ \mathcal{B}$ 
with $U_\alpha \cap U_\beta \neq \emptyset$ the identity \eqref{id:transition} 
holds, then there exists a unique $u \in \mathcal{D}^\prime(M)$ such that
$ u \circ \varphi_\alpha^{-1} = u_\alpha $ for all 
$(U_\alpha, \varphi_\alpha)\in \mathcal{B}$.

In the remaining of \Cref{sec:distributions} we will show that elements in the Banach completion of a normed 
subspace of the smooth functions on a manifold determine distributions in 
the manifold.

For convenience, we first recall briefly the definition of the Banach completion of a normed space $(V, \| \centerdot \|_V)$. Let $B$ denote the 
set of all equivalence classes of Cauchy sequences in the normed space obtained by the following relation: Given two Cauchy sequences $\{ x_N : N \in \N \}$ and 
$\{ y_N : N \in \N \}$ in $V$ we say that they are related if and only if
\[ 
\lim_{N \to \infty} \| x_N - y_N \|_V = 0.
\]
The vector space structure of $V$ naturally induces a vector structure on $B$. Furthermore, we can endow $B$ with a norm defined as follows: if $x \in B$ 
denotes the equivalence class of the Cauchy sequence $\{ x_N : N \in \N \} \subset V$, we define the norm
\[
\| x \|_B = \lim_{N \to \infty} \| x_N \|_V.
\]
Then, $(B, \| \centerdot \|_B)$ is a Banach space and contains a dense subspace isometric to $V$. The space $(B, \| \centerdot \|_B)$ is called the Banach completion of $(V, \| \centerdot \|_V)$.

In order to address the last endeavour of \Cref{sec:distributions}, let $(M, \mathcal{A})$ be a smooth manifold of dimension $m \in \N$. Assume that there exists an atlas $\{ (U_1, \varphi_1), \dots, (U_l, \varphi_l) \}$ with a finite number $l \in \N$ of charts and choose $\{ \chi_1, \dots, \chi_l \}$ a  smooth partition of unity subordinated to this atlas. For convenience, we agree from now on  that whenever $u \in C^\infty(M)$, the extension by zero of the  functions $(\chi_j u) \circ \varphi_j^{-1}$ to $\R^m \setminus \varphi_j(U_j)$ with $j \in \{ 1, \dots, l \}$ will be still denoted by $(\chi_j u) \circ\varphi_j^{-1}$. Let $\mathscr{V}(M)$ denote a vector subspace of $C^\infty (M)$ such that if $u \in \mathscr{V}(M)$, then $(\chi_j u) \circ \varphi_j^{-1} \in \mathcal{S}(\R^m)$ for all $j \in \{ 1, \dots, l \}$. Given a norm $\| \centerdot \|$ on $\mathcal{S}(\R^m)$ such that convergent sequences with respect to this norm also converge in $\mathcal{S}^\prime (\R^m)$, we endow $\mathscr{V}(M)$ with the norm defined by
\[
\| u \|_{\mathscr{V}(M)} =  \sum_{j = 1}^l\big \| (\chi_j u) \circ \varphi_j^{-1} \big\| ,\qquad u \in \mathscr{V}(M).
\]
Let $(\mathscr{B}(M), \| \centerdot \|_{\mathscr{B}(M)})$ denote the Banach completion of $(\mathscr{V}(M), \| \centerdot \|_{\mathscr{V}(M)})$. Under these conditions we can see that the elements of $\mathscr{B}(M)$ determine distributions in $M$.

\begin{lemma}\label{lem:distributions} \sl 
Assume $M$ and $\mathscr{B}(M)$ to be as above. Then every 
element in $\mathscr{B}(M)$ determines a unique distribution in $M$.
\end{lemma}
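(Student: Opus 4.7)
The plan is to use the Hörmander-style criterion cited above: from the given Cauchy sequence, construct for each chart of the finite atlas $\{(U_j,\varphi_j)\}_{j=1}^l$ a local distribution in $\mathcal{D}'(\varphi_j(U_j))$, and then verify the transition compatibility \eqref{id:transition} so that the family defines a unique distribution on $M$.

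First, I extract from the hypothesis on $\|\cdot\|$ a usable continuity statement. By linearity, the assumption that $\|\cdot\|$-convergent sequences in $\mathcal{S}(\R^m)$ converge in $\mathcal{S}'(\R^m)$ forces that for every $\phi \in \mathcal{S}(\R^m)$, the linear functional $x\mapsto \int_{\R^m} x\phi\,dy$ on $(\mathcal{S}(\R^m),\|\cdot\|)$ is continuous: there exists $C_\phi>0$ with $|\int_{\R^m} x\phi\,dy| \le C_\phi \|x\|$ for all $x\in\mathcal{S}(\R^m)$. Fix a Cauchy sequence $\{u_N\}\subset\mathscr{V}(M)$ representing a given element of $\mathscr{B}(M)$. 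For each $j\in\{1,\dots,l\}$ the sequence $\{(\chi_j u_N)\circ\varphi_j^{-1}\}_N$ is Cauchy in $(\mathcal{S}(\R^m),\|\cdot\|)$ by the definition of $\|\cdot\|_{\mathscr{V}(M)}$, so for every $\phi\in\mathcal{S}(\R^m)$ the scalar sequence $\int_{\R^m} ((\chi_j u_N)\circ\varphi_j^{-1})\phi\,dy$ is Cauchy in $\C$. I define its limit as $\langle U_j,\phi\rangle$; since $\mathcal{S}(\R^m)$ is a Fréchet space, the Banach--Steinhaus theorem yields $U_j\in\mathcal{S}'(\R^m)$. Independence of the equivalence class of $\{u_N\}$ follows from the same continuity applied to the null sequence $\{(\chi_j(u_N-v_N))\circ\varphi_j^{-1}\}$ associated with any equivalent $\{v_N\}$.

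Next, I construct $u^{(j)}\in\mathcal{D}'(\varphi_j(U_j))$ out of the family $\{U_k\}$. The motivation is the identity, valid for every $u\in\mathscr{V}(M)$ and $\phi\in\mathcal{D}(\varphi_j(U_j))$,
\[
\int_{\R^m}(u\circ\varphi_j^{-1})(y)\,\phi(y)\,dy = \sum_{k=1}^l \int_{\R^m}((\chi_k u)\circ\varphi_k^{-1})(z)\,\tilde h_k(z)\,dz,
\]
obtained by inserting the partition of unity and changing variables via $z=\varphi_k\circ\varphi_j^{-1}(y)$ on $\varphi_j(U_j\cap U_k)$. Here $\tilde h_k\in\mathcal{D}(\R^m)$ is obtained by multiplying $\phi\circ(\varphi_j\circ\varphi_k^{-1})\cdot|\det D(\varphi_j\circ\varphi_k^{-1})|$ by a suitable cutoff in $\mathcal{D}(\varphi_k(U_j\cap U_k))$ that equals $1$ on the image of the support of $(\chi_k u)\circ\varphi_k^{-1}$, and then extending by zero to $\R^m$; such a cutoff may be arranged provided $\supp\chi_k$ is compact in $U_k$, which is a standard condition on partitions of unity on paracompact manifolds. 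One then sets
\[
\langle u^{(j)},\phi\rangle := \sum_{k=1}^l \langle U_k,\tilde h_k\rangle,
\]
and continuity of $u^{(j)}$ on $\mathcal{D}(\varphi_j(U_j))$ follows from continuity of $\phi\mapsto\tilde h_k$ on $\mathcal{D}(\R^m)$ combined with continuity of each $U_k$ on $\mathcal{S}(\R^m)$.

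It then remains to verify the transition compatibility \eqref{id:transition} between $u^{(i)}$ and $u^{(j)}$: for $\phi\in\mathcal{D}(\varphi_i(U_i\cap U_j))$ and its pullback $\psi=\phi\circ(\varphi_i\circ\varphi_j^{-1})\cdot|\det D(\varphi_i\circ\varphi_j^{-1})|$ on $\varphi_j(U_i\cap U_j)$, this reduces to showing that the auxiliary test functions $\tilde h_k^{(i)}$ built from $\phi$ via $\varphi_i$ coincide with the $\tilde h_k^{(j)}$ built from $\psi$ via $\varphi_j$. This in turn is an immediate consequence of the cocycle identity $\varphi_j\circ\varphi_k^{-1}=(\varphi_j\circ\varphi_i^{-1})\circ(\varphi_i\circ\varphi_k^{-1})$ together with the multiplicativity of Jacobian determinants under composition. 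With this compatibility in hand, the Hörmander criterion supplies a unique $u\in\mathcal{D}'(M)$ with $u\circ\varphi_j^{-1}=u^{(j)}$ for every $j$, and the construction shows $u$ depends only on the equivalence class in $\mathscr{B}(M)$. The principal obstacle I anticipate is the technical bookkeeping around the cutoffs defining $\tilde h_k$: they must ensure $\tilde h_k$ has compact support in $\R^m$ while simultaneously preserving the value of the original integral and not breaking the cocycle calculation underlying the compatibility verification.
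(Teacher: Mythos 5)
Your proof follows essentially the same architecture as the paper: from the representative Cauchy sequence $\{u_N\}$ build, in each chart of the finite atlas, a local distribution by passing to the limit (using that $\|\cdot\|$-Cauchy sequences converge in $\mathcal{S}'(\R^m)$), then verify the transition compatibility \eqref{id:transition} so that the Hörmander criterion assembles a unique $u\in\mathcal{D}'(M)$. The differences are organizational. You first form the tempered distributions $U_k=\lim_N(\chi_k u_N)\circ\varphi_k^{-1}$ in $\mathcal{S}'(\R^m)$ (with the --- correct but unnecessary --- appeal to Banach--Steinhaus to upgrade sequential to linear continuity) and then build $u^{(j)}$ as $\sum_k\langle U_k,\tilde h_k\rangle$; the paper instead defines $u^j$ directly as $\lim_N\int u_N\circ\varphi_j^{-1}\,\phi$ and decomposes inside the limit. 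For the compatibility condition you compare the cutoff functions $\tilde h_k$ arising from the two charts and invoke the cocycle identity; the paper passes the change-of-variables identity for the smooth $u_N$ to the limit and cites H\"ormander's Theorem 6.1.2 on continuity of pullback, which is a touch cleaner.

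One small imprecision worth correcting: you describe the cutoff as equal to $1$ on "the image of the support of $(\chi_k u)\circ\varphi_k^{-1}$," i.e.\ on a set essentially like $\varphi_k(\supp\chi_k)$, while being compactly supported in $\varphi_k(U_j\cap U_k)$. This is impossible when $\supp\chi_k\not\subset U_j$. What you actually need is a cutoff that is $\equiv 1$ on a neighborhood of the \emph{compact} set $\varphi_k(\supp\chi_k\cap\varphi_j^{-1}(\supp\phi))$ (compact because it is the intersection of two compacts, $\supp\chi_k$ and $\varphi_j^{-1}(\supp\phi)$, inside $U_j\cap U_k$); that set does sit inside $\varphi_k(U_j\cap U_k)$, so such a cutoff exists. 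Likewise, in the transition check the two families $\tilde h^{(i)}_k,\tilde h^{(j)}_k$ need not coincide as functions --- they involve different cutoffs --- but they agree on a neighborhood of $\supp U_k\cap\{\text{relevant compact}\}$, which suffices since $U_k$ is supported in $\varphi_k(\supp\chi_k)$. With these adjustments the argument closes; this is technical bookkeeping of exactly the sort you anticipated, not a gap in the idea.
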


\begin{proof}
Let $u \in \mathscr{B}(M)$ be given and consider a Cauchy 
sequence $\{ u_N : N \in \N \} \subset \mathscr{V}(M)$ in the equivalence class 
denoted by $u$. We will show that $u$ determines a unique 
distribution in $M$. For every $\phi \in \mathcal{D} (\varphi_j (U_j))$ with 
$j \in \{1, \dots , l \}$ the following limit exists
\begin{equation}
\label{lim:distribution_chart}
\lim_{N \to \infty} \int_{\varphi_j (U_j)} u_N \circ \varphi_j^{-1}(x)  \phi(x) \, \dd x.
\end{equation}
Indeed, 
\[
\int_{\varphi_j (U_j)} u_N \circ \varphi_j^{-1}(x)  \phi(x) \, \dd x = 
\sum_{i = 1}^l \int_{\varphi_j (U_j)} (\chi_i u_N) \circ \varphi_j^{-1}(x)  \phi(x) \, \dd x.
\]
On the one hand if $U_i \cap U_j = \emptyset$, then
\[ \int_{\varphi_j (U_j)} (\chi_i u_N) \circ \varphi_j^{-1}(x)  \phi(x) \, \dd x = 0. \]
On the other hand, $U_i \cap U_j \neq \emptyset$, then
\[\int_{\varphi_j (U_j)} (\chi_i u_N) \circ \varphi_j^{-1}(x)  \phi(x) \, \dd x = 
\int_{\varphi_j (U_i \cap U_j)} (\chi_i u_N) \circ \varphi_j^{-1}(x)  \phi(x) \, \dd x.\]
Changing variable according to $x = \varphi_j \circ \varphi_i^{-1} (y)$ we have that the right-hand side equals
\begin{equation}
\label{term:integral_changed}
\int_{\varphi_i (U_i \cap U_j)} (\chi_i u_N) \circ \varphi_i^{-1}(y)  \phi \big( \varphi_j \circ \varphi_i^{-1} (y) \big) \big|\det \big(\nabla \varphi_j \circ \varphi_i^{-1} (y) \big) \big| \, \dd y.
\end{equation}
Recall that we have assumed that convergent sequences with respect to the norm 
$\| \centerdot \|$ also converge in $\mathcal{S}^\prime (\R^m)$. Then, since
$\{ (\chi_i u_N) \circ \varphi_i^{-1} : N \in \N \}$ with $i \in \{ 1, \dots, l \}$ so that $U_i \cap U_j \neq \emptyset$ are Cauchy sequences with 
respect to $\| \centerdot \|$, we know that they converge in $\mathcal{S}^\prime (\R^m)$. Therefore the limits of the integrals \eqref{term:integral_changed} with $i \in \{ 1, \dots, l \}$ so that $U_i \cap U_j \neq \emptyset$ exist as $N\to\infty$. This implies that the limit in \eqref{lim:distribution_chart} also exists.

Furthermore, the limit 
\eqref{lim:distribution_chart} is independent of the Cauchy sequence chosen in the equivalence class denoted by $u$. Namely, if 
$\{ v_N : N \in \N \}$ is another Cauchy sequence in the equivalence class of $u$, then
\[\lim_{N \to \infty} \int_{\varphi_j (U_j)} u_N \circ \varphi_j^{-1}(x)  \phi(x) \, \dd x = \lim_{N \to \infty} \int_{\varphi_j (U_j)} v_N \circ \varphi_j^{-1}(x)  \phi(x) \, \dd x.\]
Indeed,
\[
\begin{split}
& \bigg| \int_{\varphi_j (U_j)} u_N \circ \varphi_j^{-1}(x)  \phi(x) \, \dd x - \int_{\varphi_j (U_j)} v_N \circ \varphi_j^{-1}(x)  \phi(x) \, \dd x \bigg| 
\\
&\qquad \leq \sum_{i = 1}^l \bigg| \int_{\varphi_j (U_j)} (\chi_i u_N - \chi_i v_N) \circ \varphi_j^{-1}(x)  \phi(x) \, \dd x \bigg|.
\end{split}
\]
As we argued earlier, if $U_i \cap U_j = \emptyset$, then
\[ \int_{\varphi_j (U_j)} (\chi_i u_N - \chi_i v_N) \circ \varphi_j^{-1}(x)  \phi(x) \, \dd x = 0. \]
Otherwise, if $U_i \cap U_j \neq \emptyset$, then
\[
\begin{split}
\int_{\varphi_j (U_j)} & (\chi_i u_N - \chi_i v_N) \circ \varphi_j^{-1}(x)  \phi(x) \, \dd x \\
& = \int_{\varphi_j (U_i \cap U_j)} (\chi_i u_N - \chi_i v_N) \circ \varphi_j^{-1}(x)  \phi(x) \, \dd x.
\end{split}
\]
Changing variable according to $x = \varphi_j \circ \varphi_i^{-1} (y)$ we have that the right-hand side equals
\begin{equation}
\label{term:integral_changed_2}
\int_{\varphi_i (U_i \cap U_j)} (\chi_i u_N - \chi_i v_N) \circ \varphi_i^{-1}(y)   \psi(y) \, \dd y
\end{equation}
with $\psi(y) = \phi \big( \varphi_j \circ \varphi_i^{-1} (y) \big) \big|\det \big(\nabla \varphi_j \circ \varphi_i^{-1} (y) \big) \big|$.
Since $\{ (\chi_i u_N - \chi_i v_N) \circ \varphi_i^{-1} : N \in \N \}$ 
with $i \in \{ 1, \dots, l \}$ so that $U_i \cap U_j \neq \emptyset$ 
converges to zero with respect to $\| \centerdot \|$, we know that it also does 
so in $\mathcal{S}^\prime (\R^m)$, and therefore 
the limits of the integrals \eqref{term:integral_changed_2} with 
$i \in \{ 1, \dots, l \}$ so that $U_i \cap U_j \neq \emptyset$ converge to zero.

Thus, given $u \in \mathscr{B}(M)$, the limit \eqref{lim:distribution_chart} is 
well defined. Then we introduce $u^j \in \mathcal{D}^\prime (\varphi_j (U_j)) $ 
for $j \in \{ 1, \dots, l \}$ as
\[ \langle u^j, \phi \rangle = \lim_{N \to \infty} \int_{\varphi_j (U_j)} u_N \circ \varphi_j^{-1}(x)  \phi(x) \, \dd x, \qquad \forall \phi \in \mathcal{D} (\varphi_j (U_j)).\]
Note that if 
$(U_j, \varphi_j)$ and $(U_i, \varphi_i)$ satisfy 
$U_j \cap U_i \neq \emptyset$ then
\[ \int_{\varphi_j (U_j \cap U_i)} u_N \circ \varphi_j^{-1}(x)  \phi(x) \, \dd x = \int_{\varphi_j (U_j \cap U_i)} (\varphi_i \circ \varphi_j^{-1})^\ast (u_N \circ \varphi_i^{-1})(x)  \phi(x) \, \dd x \]
for all $\phi \in \mathcal{D}(\varphi_j (U_j \cap U_i))$. Then, by
\cite[Theorem 6.1.2]{zbMATH01950198} we have
\[\langle u^j, \phi \rangle = \langle (\varphi_i \circ 
\varphi_j^{-1})^\ast u^i, \phi \rangle, \qquad \forall\, \phi \in 
\mathcal{D}(\varphi_j (U_j \cap U_i)).\]
Consequently, \cite[Theorem 6.3.4]{zbMATH01950198} states that 
$\{ u^1, \dots, u^l \}$ determines a unique distribution in $M$.
This concludes the proof of this lemma.
\end{proof}
\subsection{A convenient description of Sobolev spaces}\label{sec:sobolev}
We start this section by describing an intrinsic notion of Sobolev spaces on a compact boundaryless Riemannian manifold $(M, g)$. Then we introduce the mixed-norm Sobolev spaces on $\R^n \times M$ used to capture the rotational smoothing phenomenon. 

\subsubsection{Sobolev spaces on compact boundaryless Riemannian manifolds}
{Let $(M,g)$ as above and denote the Laplace--Beltrami by  $\Delta_g$ as in 
\Cref{sec:funct_subsec}.} By Rellich's theorem, the mapping 
$ L^2(M)\ni f \mapsto u \in L^2(M)$, satisfying $-\Delta_g u = f$ in $M$,  is 
compact. Additionally, $-\Delta_g $ is self-adjoint and non-negative. By the 
spectral theorem there exists a sequence $\{ \lambda_j : j \in \N_0 \}$ of non-
negative real eigenvalues and an  orthonormal basis of $L^2(M)$ formed by 
eigenfunctions
$\{ \psi_j^i : j \in \N_0,\, i\in \N,\, i \leq d_j \}$ such that  
$\lambda_0 = 0$, $ \lambda_j < \lambda_{j+1} $, and  
$-\Delta_g \psi_j^i = \lambda_j \psi_j^i$ with $\psi_j^i \in C^\infty(M)$  for 
$i \in \{ 1, \dots, d_j \}$ and all $j\in \N_0$. Set 
$H_j = \vspan \{ \psi_j^i : i\in \N,\, i \leq d_j \}$ and define, for  
$u \in L^1(M)$, the projector $\Pi_j$ on $H_j$ as
\[
\Pi_j u (p) = \sum_{i = 1}^{d_j} \Big(\int_M \overline{\psi_j^i} u \, \dd \mu_g\Big) \psi_j^i (p), \qquad p \in M. 
\]
Here $\mu_g$ denotes the Riemannian density. Note that for $u \in C^\infty(M)$ we 
have that $\Pi_j[(\Id - \Delta_g) u] = (1 + \lambda_j) \Pi_j u $. Consequently,
\begin{equation}
\Pi_j [ (\Id - \Delta_g) \circ \overset{k}{\cdots} \circ (\Id - \Delta_g) u ] = (1 + \lambda_j)^{k} \Pi_j u  \qquad \forall\, u \in C^\infty(M).
\label{id:projector_kBessel}
\end{equation}
The fact that $u \in C^\infty(M)$ implies  $(\Id - \Delta_g) \circ \overset{k}{\cdots} \circ (\Id - \Delta_g) u \in L^2(M)$  for every $k \in \N_0$. This conclusion together with the identity \eqref{id:projector_kBessel} ensures that
\begin{equation}
(\Id - \Delta_g) \circ \overset{k}{\cdots} \circ (\Id - \Delta_g) u = \sum_{j \in \N_0} (1 + \lambda_j)^k \Pi_j u
\label{id:composition_Bessel}
\end{equation}
and for any $k\in \N_0$
\[
\sum_{j \in \N_0} (1 + \lambda_j)^{2k} \| \Pi_j u \|_{L^2(M)}^2 < \infty.
\]
Thus, we know that 
$ \{ \sum_{j = 0}^N (1 + \lambda_j)^{s/2} \Pi_j u : N \in \N_0 \}$  with 
$u \in C^\infty(M)$ 
is a Cauchy sequence in $L^2(M)$ for every $s \in \R$. The limit of this sequence 
defines the  Bessel potential of order $s$. {This is the definition given in 
\eqref{eq:bessel_pot} for $u \in C^\infty(M)$ and $s \in \R$.} Note that for 
$s \in \R$ and $u \in C^\infty(M)$, we have that
%
%
%
%
%
\begin{equation}
\label{id:Hs_projectors}
\|(\Id - \Delta_g)^{s/2}  u \|_{L^2(M)}
= \Big( \sum_{j \in \N_0} (1 + \lambda_j)^s \| \Pi_j u \|_{L^2(M)}^2 \Big)^{1/2}.
\end{equation}
Furthermore, the non-negative functional
\[
\| \centerdot \|_{H^s(M)} : u \in C^\infty(M) \longmapsto \|(\Id - \Delta_g)^{s/2}  u \|_{L^2(M)}
\]
defines a norm. Identity \eqref{id:Hs_projectors}  gives a simple way of computing $ \| u \|_{H^s(M)}$ whenever $u $ belongs to $ C^\infty (M)$.
Note that the norm for $s = 0$ coincides with the norm of ${L^2(M)}$. With this norm we define the Sobolev space of order $s$ in an intrinsic way.
\begin{definition}\sl For $s \in \R$ define $H^s(M)$  as the Banach  completion of the normed space
$(C^\infty(M), \| \centerdot \|_{H^s(M)})$. 
\end{definition}
By continuity, the Bessel potential of order $s$ can be defined for every 
$u \in H^s(M)$ and
\begin{equation}
\| u \|_{H^s(M)} = \| (\Id - \Delta_g)^{s/2} u \|_{L^2(M)},
\label{id:norm_Bessel}
\end{equation}
holds for all $u \in H^s(M)$.

{The intrinsic definition of the Sobolev spaces given above has an equivalent 
description in terms of local charts, as we have stated in 
\Cref{prop:norm_charts}.}
%
%
This proposition is a consequence of the results in
\cite{zbMATH03256081,zbMATH03814630}. Let us mention the main idea of the 
proof. Consider first the case where $s=2k$ with $k \in \N$. As a consequence of the identities \eqref{id:norm_Bessel} and 
\eqref{id:composition_Bessel} and the fact that, in every chart $(U_j, \varphi_j)$, the operator
$$(\Id - \Delta_g)^k = (\Id - \Delta_g) \circ \overset{k}{\cdots} \circ (\Id - \Delta_g)$$
is identified with a differential operator of order $2k$ in $\R^m$, one sees that for every $k \in \N$
\begin{equation}
\| u \|_{H^{2k}(M)} \lesssim \sum_{j = 1}^N \| (\chi_j u)\circ \varphi_j^{-1} \|_{H^{2k}(\R^m)}.
\label{in:upperbound}
\end{equation}
The reverse inequality follows from the well-posedness and the regularity theory for the equation $u - \Delta_g u = f$ in $M$. This is fully  detailed in the book \cite{zbMATH06271961}. This is a sketch of the proof in the cases where $s = 2 k$ with  $k \in \N$. For general $s \in \R$ one first needs to show that in every chart $(U_j, \varphi_j)$, the operator $(\Id - \Delta_g)^{s/2}$ is identified with an elliptic  pseudodifferential operator of order $s$ with a principal symbol given by $(\sum_{i,k = 1}^m g^{ik}(x) \xi_i \xi_k)^{s/2}$, for $x \in \varphi_j (U_j)$ and $|\xi| \gtrsim 1$ (see the details in
\cite{zbMATH03256081,zbMATH03814630}). Thus, the generalization of 
\eqref{in:upperbound} to $s \in \R$ follows from the boundedness of 
{pseudodifferential} operators of order $s$. The reverse inequality is {a}
consequence of the ellipticity and the possibility of constructing the 
corresponding parametrix for $(\Id - \Delta_g)^{s/2}$ in each chart.
This finishes {the sketch of} the
proof in the general case $s \in \R$.

We finish the discussion of Sobolev spaces on compact boundaryless Riemannian 
manifolds by showing that the elements of $H^s(M)$ are distributions in $M$.

\begin{corollary}\label{cor:distribution} \sl Let $(M, g)$ be a compact 
boundaryless Riemannian manifold of dimension $m \in \N$.
Every element in $ H^s(M)$ with $s \in \R$ determines a unique distribution in 
$M$.
\end{corollary}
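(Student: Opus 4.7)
The plan is to apply \Cref{lem:distributions} with an appropriate choice of
normed subspace of $C^\infty(M)$ and norm on $\mathcal{S}(\R^m)$. Since $M$ is
compact and boundaryless, we may fix a \emph{finite} smooth atlas
$\{(U_1,\varphi_1),\dots,(U_l,\varphi_l)\}$ and a subordinate smooth partition of
unity $\{\chi_1,\dots,\chi_l\}$. For any $u \in C^\infty(M)$ each function
$(\chi_j u)\circ\varphi_j^{-1}$ is smooth and compactly supported in
$\varphi_j(U_j)\subset\R^m$, hence belongs to $\mathcal{S}(\R^m)$. Thus the hypothesis of
\Cref{lem:distributions} is satisfied by taking
$\mathscr{V}(M) = C^\infty(M)$.

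As the norm $\|\,\cdot\,\|$ on $\mathcal{S}(\R^m)$ I would take
$\|\,\cdot\,\|_{H^s(\R^m)}$. Two things need to be checked. First, convergence
with respect to $\|\,\cdot\,\|_{H^s(\R^m)}$ implies convergence in
$\mathcal{S}^\prime(\R^m)$: this is immediate, since $\mathcal{S}(\R^m) \hookrightarrow
H^s(\R^m) \hookrightarrow \mathcal{S}^\prime(\R^m)$ continuously for every $s\in\R$, the
second embedding being given by the pairing
$\langle u,\phi\rangle=\int (I-\Delta)^{s/2}u\cdot(I-\Delta)^{-s/2}\phi\,\dd x$.
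Second, by \Cref{prop:norm_charts} we have
\[
\| u \|_{H^s(M)} \eqsim \sum_{j=1}^l \big\|(\chi_j u)\circ\varphi_j^{-1}\big\|_{H^s(\R^m)} = \| u \|_{\mathscr{V}(M)}
\]
for every $u \in C^\infty(M)$, with implicit constants depending only on $m$,
$s$, the atlas and the partition of unity.

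The norm equivalence above ensures that a sequence in $C^\infty(M)$ is Cauchy
with respect to $\|\,\cdot\,\|_{H^s(M)}$ if and only if it is Cauchy with respect
to $\|\,\cdot\,\|_{\mathscr{V}(M)}$, and the corresponding Banach completions
$H^s(M)$ and $\mathscr{B}(M)$ coincide as topological vector spaces (with
equivalent norms). Therefore every element of $H^s(M)$ is represented by an
equivalence class of Cauchy sequences in $\mathscr{V}(M)$, and
\Cref{lem:distributions} produces for each such element a unique distribution in
$M$. The only conceptual subtlety, and the one that justifies going through
\Cref{lem:distributions} rather than trying to pair $u\in H^s(M)$ directly
against test functions on $M$, is that for negative $s$ the element $u$ is not a
function, so the notion of distribution has to be constructed chart by chart and
the compatibility relation \eqref{id:transition} has to be verified---but this is
precisely the content of \Cref{lem:distributions}, and no further work is
required.
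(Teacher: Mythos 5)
Your proposal is correct and follows exactly the route the paper takes: the paper's proof is literally the one-line assertion that the corollary follows from \Cref{lem:distributions} and \Cref{prop:norm_charts}, and your argument supplies precisely the details that this one-liner compresses (the choice $\mathscr{V}(M)=C^\infty(M)$ with the $H^s(\R^m)$ chart norms, the embedding $H^s(\R^m)\hookrightarrow\mathcal{S}'(\R^m)$, and the norm equivalence from \Cref{prop:norm_charts} identifying $H^s(M)$ with $\mathscr{B}(M)$). No discrepancy.
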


\begin{proof}
This follows from \Cref{lem:distributions} and \Cref{prop:norm_charts}.
\end{proof}
\subsubsection{Mixed-norm Sobolev spaces}\label{sec:Plancherel}
As previously agreed, $(M, g)$ stands for a compact boundaryless 
Riemannian manifold of dimension $m \in \N$. {Recall the definition of $\mathcal{A}(\R^n \times M)$ given in \Cref{def:A}.}
%
%
As mentioned then, a function  
$u \in C^\infty (\R^n \times M) $ belongs to $ \mathcal{A}(\R^n \times M)$ if and only if its Fourier transform, denoted by $\widehat{u}$ or $\mathcal{F}u$ and defined by \eqref{eq:Fourier},
%
%
belongs to $\mathcal{A}(\R^n \times M)$. Consequently, for all $t \in \R_+$ and all $k \in \N_0$ there exists a constant $C>0$ such that
\begin{equation}
\label{in:bound_t-k}
\int_{\R^n} |\xi|^{2t} \| (\Id - \Delta_g)^k \widehat{u}(\xi, \centerdot) \|_{L^2(M)}^2 \, \dd \xi \leq C.
\end{equation}
Note that for $k=0$ we have
\[\int_M \| u(\centerdot, p) \|_{\dot{H}^t(\R^n)}^2 \, \dd \mu_g(p) = \int_{\R^n} |\xi|^{2t} \| \widehat{u}(\xi, \centerdot) \|_{L^2(M)}^2 \, \dd \xi. \]
This implies that for all $t \in \R_+$ we have that the non-negative functional
\[
 u \in \mathcal{A}(\R^n \times M)\longmapsto \bigg( \int_M \| u(\centerdot, p) \|_{\dot{H}^t(\R^n)}^2 \, \dd \mu_g(p) \bigg)^{1/2} 
 \]
is well defined. Furthermore, it defines norm on $\mathcal{A}(\R^n \times M)$ that 
will be denoted from now on by $\| \centerdot \|_{L^2(M; \dot{H}^t(\R^n))}$. 

\begin{definition} \sl For $t \in \R_+$, we define 
$L^2(M; \dot{H}^t(\R^n))$ as the Banach completion 
of the normed space $(\mathcal{A}(\R^n \times M), \| \centerdot \|_{L^2(M; \dot{H}^t(\R^n))})$.
\end{definition}

\begin{proposition}\label{prop:distributionL2Ht}\sl For $t \in \R_+$, we have that every 
$u \in L^2(M; \dot{H}^t(\R^n))$ determines a unique distribution in 
$\R^n \times M$.
\end{proposition}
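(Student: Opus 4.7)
Plan. The plan is to reduce to \Cref{lem:distributions} on the smooth manifold $\R^n\times M$. Fix a finite smooth atlas $\{(U_j,\varphi_j)\}_{j=1}^l$ of $M$ (available by compactness) with a subordinated smooth partition of unity $\{\chi_j\}_{j=1}^l$; these induce the atlas $\{(\R^n\times U_j,\Id_{\R^n}\times\varphi_j)\}_{j=1}^l$ of $\R^n\times M$ and partition of unity $\tilde\chi_j(x,p)=\chi_j(p)$. For $u\in\mathcal{A}(\R^n\times M)$, the function $(\tilde\chi_j u)\circ(\Id\times\varphi_j^{-1})$ is Schwartz in $x$ by the definition of $\mathcal{A}$ and compactly supported in $y$ since $\chi_j$ is compactly supported in $U_j$; hence it belongs to $\mathcal{S}(\R^{n+m})$, which is exactly the setting in which \Cref{lem:distributions} was formulated.

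The core of the proof is to verify the hypothesis of \Cref{lem:distributions}, namely that convergence in the norm $\|\cdot\|_{L^2(M;\dot H^t(\R^n))}$ implies convergence of each chart representation in $\mathcal{S}^\prime(\R^{n+m})$. First I would show, via the Plancherel identity \eqref{id:norm_different-look} and a change of coordinates on $\mathrm{supp}(\chi_j)\subset U_j$ (where the Jacobian relating $\dd\mu_g$ and Lebesgue measure is bounded above and below), that
\[
\big\|(\tilde\chi_j u)\circ(\Id\times\varphi_j^{-1})\big\|_{L^2(\R^m;\dot H^t(\R^n))}\lesssim \|u\|_{L^2(M;\dot H^t(\R^n))}.
\]
It then suffices to see that convergence in $L^2(\R^m;\dot H^t(\R^n))$ of functions supported in a fixed compact set of $\R^m$ implies convergence in $\mathcal{S}^\prime(\R^{n+m})$. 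For $\psi\in\mathcal{S}(\R^{n+m})$, I would apply Plancherel in the $x$ variable, split the $\xi$-integration into $|\xi|\geq 1$ and $|\xi|<1$, and estimate the high-frequency contribution by Cauchy--Schwarz with the conjugate weights $|\xi|^{\pm t}$ against the Schwartz factor $\hat\psi$.

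The main obstacle is the low-frequency regime $|\xi|<1$, where the weight $|\xi|^{-t}$ ceases to be integrable for $t\geq n/2$, so the naive Cauchy--Schwarz bound fails. This is overcome using that each Cauchy sequence representative belongs to $\mathcal{A}$: after multiplication by $\tilde\chi_j$ and transport to the chart, the resulting Schwartz function has partial $x$-Fourier transform that is smooth in $\xi$, with values at low frequencies controllable by an $L^\infty_\xi L^2_y$ estimate coming from the compact $y$-support together with the Bessel regularity on $M$ encoded in the definition of $\mathcal{A}$; combined with the rapid decay of $\hat\psi$ in $\xi$ this yields the needed uniform bound. Once chart-level $\mathcal{S}^\prime$-convergence is established and shown (by applying the same argument to a null sequence) to depend only on the equivalence class of the Cauchy sequence, the transition identity \eqref{id:transition} on every overlap passes to the limit, and \cite[Theorem 6.3.4]{zbMATH01950198} glues the chart-local limits into a unique distribution on $\R^n\times M$, as required.
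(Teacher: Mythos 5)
Your proposal follows the paper's route: reduce to \Cref{lem:distributions} by endowing the chart-pushforwards with the $L^2_y\dot{H}^t_x$ norm on $\mathcal{S}(\R^{n+m})$. Where you diverge is that the paper simply \emph{invokes} Lemma~\ref{lem:distributions} (after noting the equivalence of the chart-norm with $\|\cdot\|_{L^2(M;\dot H^t(\R^n))}$) without verifying its standing hypothesis, namely that Cauchy sequences for $\|\cdot\|$ converge in $\mathcal{S}'(\R^{n+m})$, whereas you correctly flag that this hypothesis is nontrivial at low frequency when $t\geq n/2$ and try to close it.

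The fix you propose does not close it. The $L^\infty_\xi L^2_y$ control you extract from $\mathcal{A}(\R^n\times M)$ is a bound on \emph{each} $\widehat{u_N}(\xi,\cdot)$ separately; it does not shrink with $N$. What one actually needs in the low-frequency regime is that
\[
\int_{\{|\xi|<1\}\times\R^m}\bigl|\widehat{u_N}(\xi,y)-\widehat{u_L}(\xi,y)\bigr|\,|\widehat{\psi}(\xi,y)|\,\dd\xi\,\dd y \longrightarrow 0,
\]
and the $L^2\dot H^t$-Cauchy condition provides no information about $\widehat{u_N}-\widehat{u_L}$ near $\xi=0$ because the weight $|\xi|^{2t}$ vanishes there. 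Your argument therefore shows only that the pairings $\langle u_N,\psi\rangle$ are \emph{bounded}, not that they form a Cauchy sequence; the conclusion that the chart-level limits exist in $\mathcal{S}'$ does not follow. (Indeed, when $t\geq n/2$ one can concentrate $\widehat{u_N}$ near the origin, e.g.\ $\widehat{u_N}(\xi)\sim|\xi|^{-n}\mathbf{1}_{\{1/N<|\xi|<1\}}$ suitably smoothed and tensored with a fixed $L^2(M)$ factor, to produce a Cauchy sequence in the $L^2_y\dot H^t_x$ norm whose pairing with a Schwartz $\psi$ with $\widehat{\psi}(0,\cdot)\not\equiv 0$ diverges logarithmically; this is the well-known failure of the completion of $\dot H^t(\R^n)$ to embed in $\mathcal{S}'$ once $t\geq n/2$.) For $t<n/2$ the weight $|\xi|^{-2t}$ is locally integrable and your high/low split closes by Cauchy--Schwarz alone, so the proposal, like the paper's proof, is fine there; the genuine gap is in the regime $t\geq n/2$ and is not resolved by the $\mathcal{A}$-regularity observation.
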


\begin{proof} This statement is a straightforward conclusion of 
\Cref{lem:distributions}. This becomes clear when noting that $\R^n \times M$ is 
a smooth manifold that can be covered by an atlas 
$\{ (\R^n \times U_1, \id \otimes \varphi_1), \dots, (\R^n \times U_l, \id \otimes \varphi_l) \}$  
with a finite number $l \in \N$ of charts. Here the charts 
$(U_1, \varphi_1), \dots, (U_l, \varphi_l) $ form an atlas for the compact 
Riemannian manifold $M$. Then, we choose $\{\chi_1, \dots, \chi_l \}$ a smooth  
partition of unity subordinated to the atlas for $M$ and set 
$\mathscr{V}(\R^n \times M) = \mathcal{A}(\R^n \times M)$ with the norm 
\[
\| u \|_{\mathscr{V}(\R^n \times M)} =  \sum_{j = 1}^l \Big( \int_{\R^m} \| \chi_j\circ \varphi_j^{-1}(y) u(\centerdot, \varphi_j^{-1}(y)) \|_{\dot{H}^t(\R^n)}^2 |g_j(y)|^{1/2} \dd y \Big)^{1/2}, 
\]
where $|g_j(y)|$ denotes the determinant of the matrix representing the metric $g$ in the chart $(U_j, \varphi_j)$. The only point left to conclude this  proposition from \Cref{lem:distributions} is to observe that a sequence in $\mathscr{V}(\R^n \times M) = \mathcal{A}(\R^n \times M)$ is Cauchy with respect  to the norm $\| \centerdot \|_{\mathscr{V}(\R^n \times M)}$ if and only if is Cauchy with respect to $\| \centerdot \|_{L^2(M; \dot{H}^t(\R^n))}$.
\end{proof}

From the discussion on \Cref{sec:sobolev}, we know that 
$(\Id - \Delta_g)^s u(x, \centerdot) \in L^2(M) $ for all 
$u \in \mathcal{A}(\R^n \times M)$ and $s\in \R$. Furthermore,
whenever $k \in \N_0$ with $k > s$ we have
\[
\begin{split}
\| (\Id - \Delta_g)^s u(x, \centerdot) \|_{L^2(M)} &\leq \| (\Id - \Delta_g)^k u(x, \centerdot) \|_{L^2(M)} \\
& \leq \mu_g (M)^{1/2} \| (\Id - \Delta_g)^k u(x, \centerdot) \|_{L^\infty(M)}
\end{split}
\]
for all $x\in \R^n$. The fact that $u \in \mathcal{A}(\R^n \times M)$ implies 
that the function 
$x \mapsto \| (\Id - \Delta_g)^k u(x, \centerdot) \|_{L^\infty(M)}$ 
belongs to $L^2(\R^n)$. Hence, $(\Id - \Delta_g)^s u \in L^2(\R^n \times M) $ for 
all $u \in \mathcal{A}(\R^n \times M)$ and $s\in \R$.
Below, we will see that in fact
$(\Id - \Delta_g)^s u \in L^2(M; \dot{H}^t(\R^n))$ for all $t \in \R_+$, and we 
will provide a convenient way of computing its norm. Start by noticing that from the identity 
\eqref{id:composition_Bessel} we have that
\begin{equation}
\label{id:k-bessel_mixed}
\| (\Id - \Delta_g)^k \widehat{u}(\xi, \centerdot) \|_{L^2(M)}^2 = \sum_{j \in \N_0} (1 + \lambda_j)^{2k} \| \Pi_j \widehat{u}(\xi, \centerdot)\|_{L^2(M)}^2
\end{equation}
for all $\xi \in \R^n$ and all $k \in \N_0$. Then, by the inequality \eqref{in:bound_t-k}, we have that for all $t \in \R_+$ and $k \in \N_0$ there exists a constant $C>0$ such that
\[ \sum_{j \in \N_0} (1 + \lambda_j)^{2k} \int_{\R^n} |\xi|^{2t} \| \Pi_j \widehat{u}(\xi, \centerdot)\|_{L^2(M)}^2 \, \dd \xi \leq C. \]
In particular, from \eqref{id:Hs_projectors} we know that the non-negative functional
\[ u \in \mathcal{A}(\R^n \times M)
\longmapsto \Big( \int_{\R^n} |\xi|^{2t} \| \widehat{u}(\xi, \centerdot) \|_{H^s(M)}^2 \, \dd \xi \Big)^{1/2} \]
is well-defined for all $s \in \R$ and $t \in \R_+$.

\begin{lemma} \label{lem:new_norm} \sl For all $u \in \mathcal{A}(\R^n \times M)$ we have that $(\Id - \Delta_g)^s u \in L^2(M; \dot{H}^t(\R^n))$ for all $s \in \R$ and $t \in \R_+$. 
Moreover,
\[ \| (\Id - \Delta_g)^{s/2} u \|_{L^2(M; \dot{H}^t(\R^n))} = \Big( \int_{\R^n} |\xi|^{2t} \| \widehat{u}(\xi, \centerdot) \|_{H^s(M)}^2 \, \dd \xi \Big)^{1/2} \]
for all $u \in \mathcal{A}(\R^n \times M)$, $s \in \R$ and $t \in \R_+$.
\end{lemma}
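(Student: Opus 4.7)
The plan is to expand $u$ in a Laplace--Beltrami eigenfunction series on $M$ with $x$-dependent coefficients, verify the identity for finite truncations by direct computation, and then pass to the limit. For every $j \in \N_0$ and $i \in \{1, \dots, d_j\}$, set
\[ a_j^i(x) = \int_M \overline{\psi_j^i(p)}\, u(x, p)\, \dd \mu_g(p), \qquad x \in \R^n. \]
Since $u \in \mathcal{A}(\R^n \times M)$, one verifies that each $a_j^i$ lies in $\mathcal{S}(\R^n)$ with Fourier transform $\widehat{a_j^i}(\xi) = \int_M \overline{\psi_j^i(p)}\, \widehat u(\xi, p)\, \dd\mu_g(p)$. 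Introduce the truncations $u_N = \sum_{j \le N,\, i \le d_j} a_j^i \otimes \psi_j^i$ and the natural finite-sum candidates for their Bessel transforms, $w_N = \sum_{j \le N,\, i \le d_j} (1+\lambda_j)^{s/2}\, a_j^i \otimes \psi_j^i$; both are finite linear combinations of separable elements of $\mathcal{A}(\R^n \times M)$, so their $L^2(M; \dot{H}^t(\R^n))$-norms are unambiguously defined.

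The first step is to check the identity for the truncations. Using $L^2(M)$-orthonormality of $\{\psi_j^i\}$ together with Fubini, the left-hand side for $w_N$ equals
\[ \sum_{j=0}^N (1+\lambda_j)^s \sum_{i=1}^{d_j} \| a_j^i \|_{\dot H^t(\R^n)}^2. \]
On the Fourier side, $\widehat{u_N}(\xi, p) = \sum_{j \le N,\, i \le d_j} \widehat{a_j^i}(\xi)\, \psi_j^i(p)$, so the spectral identity \eqref{id:Hs_projectors} applied in $M$ together with Plancherel in $\R^n$ produce the same quantity for $\int_{\R^n} |\xi|^{2t} \|\widehat{u_N}(\xi, \cdot)\|_{H^s(M)}^2 \dd \xi$. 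The second step is to pass to the limit. Applying \eqref{id:Hs_projectors} pointwise in $\xi$ to $\widehat u(\xi, \cdot) \in C^\infty(M)$ yields
\[ \| \widehat u(\xi, \cdot) \|_{H^s(M)}^2 = \sum_{j \in \N_0}(1+\lambda_j)^s \sum_{i=1}^{d_j} |\widehat{a_j^i}(\xi)|^2. \]
Fixing any integer $k$ with $2k \ge s$, this is bounded by $\|(\Id - \Delta_g)^k \widehat u(\xi, \cdot)\|_{L^2(M)}^2$, and multiplying by $|\xi|^{2t}$ the bound lies in $L^1(\R^n)$ by \eqref{in:bound_t-k}. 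Monotone convergence then converts the identity for $u_N$ into the one asserted for $u$, and the same domination shows that $\{w_N\}$ is a Cauchy sequence in $L^2(M; \dot H^t(\R^n))$.

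The main subtlety—and the only step that is not routine—is identifying the limit $v \in L^2(M; \dot H^t(\R^n))$ of $\{w_N\}$ with $(\Id - \Delta_g)^{s/2} u$ itself, because for general real $s$ this operator is not differential in $M$ and there is no a priori reason for $(\Id - \Delta_g)^{s/2} u$ to belong to $\mathcal{A}(\R^n \times M)$. For each fixed $x$, however, the sequence $w_N(x, \cdot)$ converges in $L^2(M)$ to $(\Id - \Delta_g)^{s/2} u(x, \cdot)$ by the intrinsic definition of the Bessel potential through the eigenfunction series discussed in \Cref{sec:sobolev}. By \Cref{prop:distributionL2Ht}, $v$ determines a unique distribution on $\R^n \times M$; the pointwise-in-$x$ convergence forces this distribution to coincide with $(\Id - \Delta_g)^{s/2} u$, and the norm of $v$ is by construction the right-hand side of the stated identity, completing the plan.
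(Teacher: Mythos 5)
Your proposal is correct and follows essentially the same route as the paper: expand $u$ in the spectral decomposition of $(\Id-\Delta_g)$, verify the norm identity for the truncated partial sums, dominate by $\|(\Id-\Delta_g)^k\widehat u(\xi,\centerdot)\|_{L^2(M)}^2$ to pass to the limit, and invoke \Cref{prop:distributionL2Ht} to identify the $L^2(M;\dot H^t(\R^n))$-limit with $(\Id-\Delta_g)^{s/2}u$. The only place where the paper is a bit more explicit is the final identification: asserting that pointwise-in-$x$ convergence in $L^2(M)$ ``forces'' the distributional coincidence on $\R^n\times M$ quietly requires a domination; the paper supplies it by bounding $\|w_N(x,\centerdot)-(\Id-\Delta_g)^{s/2}u(x,\centerdot)\|_{L^2(M)}\le \mu_g(M)^{1/2}\|(\Id-\Delta_g)^k u(x,\centerdot)\|_{L^\infty(M)}$, which lies in $L^2(\R^n)$ since $u\in\mathcal A(\R^n\times M)$, thereby upgrading the pointwise limit to convergence in $L^1_{\mathrm{loc}}(\R^n;L^2(M))$ and hence in $\mathcal D'(\R^n\times M)$ --- a step you should spell out to close the argument.
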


\begin{proof}
Observe that $ \sum_{j = 0}^N (1 + \lambda_j)^{s/2} \Pi_j u $ belongs to 
$ \mathcal{A}(\R^n \times M)$ for an arbitrary 
$u \in \mathcal{A}(\R^n \times M)$, 
$N \in \N_0$ and $s \in \R$. Moreover, 
\begin{equation}
\label{term:sequence_partial_sum}
\Big\{ \sum_{j = 0}^N (1 + \lambda_j)^{s/2} \Pi_j u : N \in \N \Big\}
\end{equation}
is a Cauchy sequence in $L^2(M; \dot{H}^t(\R^n))$ for every $s \in \R$ and 
$t \in \R_+$. Indeed, since $\mathcal{F}(\Pi_j u) = \Pi_j \widehat{u}$, we have for every $L, N \in \N$ that
\begin{equation}
\label{id:norm_partial_sum}
\begin{split}
\Big\| \sum_{j = N}^{N+L} (1 + \lambda_j)^{s/2} \Pi_j  u \Big\|_{L^2(M; \dot{H}^t(\R^n))}^2 
& = \int_{\R^n} |\xi|^{2t} \Big\| \sum_{j = N}^{N+L} (1 + \lambda_j)^{s/2} \Pi_j \widehat{u} (\xi, \centerdot) \Big\|_{L^2(M)}^2 \, \dd \xi
\\
& = \int_{\R^n} |\xi|^{2t}  \sum_{j = N}^{N+L} (1 + \lambda_j)^s \|\Pi_j \widehat{u} (\xi, \centerdot)\|_{L^2(M)}^2 \, \dd \xi.
\end{split}
\end{equation}
From \eqref{id:k-bessel_mixed} we have for $k \in \N_0$ such that $k > s/2$ that
\[ \sum_{j = N}^{N+L} (1 + \lambda_j)^s \|\Pi_l \widehat{u} (\xi, \centerdot)\|_{L^2(M)}^2 \leq \| (\Id - \Delta_g)^k \widehat{u}(\xi, \centerdot) \|_{L^2(M)}^2. \]
By the fact that $\widehat{u} \in \mathcal{A}(\R^n \times M)$ we have that the 
function $\xi \mapsto |\xi|^t \| (\Id - \Delta_g)^k \widehat{u}(\xi, \centerdot) \|_{L^2(M)}$
belongs to $L^2(\R^n)$. Then, by the dominated convergence theorem we have that
\[ 
\lim_{N \to \infty} \int_{\R^n} |\xi|^{2t}  \sum_{j = N}^{N+L} (1 + \lambda_j)^s \|\Pi_j \widehat{u} (\xi, \centerdot)\|_{L^2(M)}^2 \, \dd \xi = 0  
 \]
uniformly in $L\in \N_0$, which implies that
\[ 
\lim_{N \to \infty} \Big\| \sum_{j = N}^{N+L} (1 + \lambda_j)^{s/2} \Pi_j u \Big\|_{L^2(M; \dot{H}^t(\R^n))} = 0 , 
\]
uniformly in $L\in \N_0$. Consequently the sequence \eqref{term:sequence_partial_sum} is  Cauchy in $L^2(M; \dot{H}^t(\R^n))$. Therefore, there is $v_s \in L^2(M; \dot{H}^t(\R^n))$ defined by the Cauchy sequence \eqref{term:sequence_partial_sum}. By a standard limiting argument applied to  \eqref{id:norm_partial_sum} with $N = 0$ and $L$ tending to infinity, we have that
\[
\| v_s \|_{L^2(M; \dot{H}^t(\R^n))} = \Big( \int_{\R^n} |\xi|^{2t} \| \widehat{u}(\xi, \centerdot) \|_{H^s(M)}^2 \, \dd \xi \Big)^{1/2}. 
\]
It remains to prove that $v_s = (\Id - \Delta_g)^{s/2} u$.

Right before this lemma we saw that 
$(\Id - \Delta_g)^{s/2} u \in L^2(\R^n \times M)$ since $u \in \mathcal{A}(\R^n \times M)$.
Furthermore, recall that for every $x \in \R^n$, we defined $(\Id - \Delta_g)^{s/2} u(x, \centerdot)$ as the equivalence class of
\[
\Big\{ \sum_{j = 0}^N (1 + \lambda_j)^{s/2} \Pi_j u(x, \centerdot) : N \in \N \Big\}
\]
which is a Cauchy sequence in $L^2(M)$ for every $x \in \R^n$. Hence, whenever $k \in \N_0$ with $k > s$ and $N \in \N_0$ we have that
\[
\begin{split}
\Big\| (\Id - \Delta_g)^{s/2} u(x, \centerdot) &- \sum_{j = 0}^N (1 + \lambda_j)^{s/2} \Pi_j u(x, \centerdot) \Big\|_{L^2(M)}
 \\
&\leq  \Big( \sum_{j = N + 1}^\infty (1 + \lambda_j)^{k} \|\Pi_j u(x, \centerdot)\|_{L^2(M)}^2 \Big)^{1/2} 
\\
&\leq \mu_g (M)^{1/2} \| (\Id - \Delta_g)^k u(x, \centerdot) \|_{L^\infty(M)}
\end{split}
\]
for all $x\in \R^n$. Since $u \in \mathcal{A}(\R^n \times M)$, 
the function $x \mapsto \| (\Id - \Delta_g)^k u(x, \centerdot) \|_{L^\infty(M)}$ 
belongs to $L^2(\R^n)$. From here one can derive that
\[ \lim_{N \to \infty} \int_{\R^n} \Big\| (\Id - \Delta_g)^{s/2} u(x, \centerdot) - \sum_{j = 0}^N (1 + \lambda_j)^{s/2} \Pi_j u(x, \centerdot) \Big\|_{L^2(M)} \, \dd x = 0. \]

At this point we are in the position to ensure that $v_s = (\Id - \Delta_g)^{s/2} u$. Indeed, on the one hand we know from \Cref{prop:distributionL2Ht} that $v_s$ determines a unique distribution in $\R^n \times M$ defined from the Cauchy sequence in \eqref{term:sequence_partial_sum}. On the other hand, this same Cauchy sequence converges to $(\Id - \Delta_g)^{s/2} u$ in $L^2(\R^n \times M)$. Since $(\Id - \Delta_g)^{s/2} u$ is in particular a distribution in $\R^n \times M$, we can conclude that $(\Id - \Delta_g)^{s/2} u$ is the distribution determined by $v_s$. This concludes the proof of this lemma.
\end{proof}

After \Cref{lem:new_norm} we know that the non-negative functional defined in \eqref{eq:functional}
is well defined for all $s \in \R$, $t \in \R_+$ and $u\in \mathcal \mathcal{A}(\R^n \times M) $.  

\begin{definition} \sl For $s \in \R$ and $t \in \R_+$, we define 
$H^s(M; \dot{H}^t(\R^n))$ as the Banach completion 
of the set $\mathcal{A}(\R^n \times M) $ endowed with the norm 
$ \| \centerdot \|_{H^s(M; \dot{H}^t(\R^n))}$.
\end{definition} 

We now can prove the following proposition.

%

\begin{proposition}\label{prop:distributionHsHt}\sl For $s \in \R$ and $t \in \R_+$ we have that every  $u \in H^s(M; \dot{H}^t(\R^n))$ determines a unique distribution in $\R^n \times M$.
\end{proposition}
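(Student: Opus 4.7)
The plan is to follow the same template as the proof of Proposition \ref{prop:distributionL2Ht}, applying Lemma \ref{lem:distributions} to the product manifold $\R^n \times M$. Cover $\R^n \times M$ by the finite atlas $\{(\R^n \times U_j, \id \otimes \varphi_j)\}_{j=1}^l$ induced by a smooth atlas $\{(U_j, \varphi_j)\}_{j=1}^l$ of $M$, and fix a subordinate smooth partition of unity $\{\chi_j\}_{j=1}^l$. Set $\mathscr{V}(\R^n \times M) := \mathcal{A}(\R^n \times M)$, and endow it with the local-chart norm
\begin{equation*}
\|u\|_{\mathscr{V}} := \sum_{j=1}^l \bigg(\int_{\R^n} |\xi|^{2t}\, \big\|(\chi_j \widehat u(\xi,\cdot)) \circ \varphi_j^{-1}\big\|_{H^s(\R^m)}^2 \,\dd\xi\bigg)^{1/2}.
\end{equation*}
Each summand is the restriction, to functions with $y$-support in $\varphi_j(U_j)$, of a mixed-order Sobolev-type norm on $\R^n_x \times \R^m_y$, and sequences that are Cauchy with respect to it converge in $\mathcal{S}'(\R^{n+m})$ by the same reasoning used in the proof of Proposition \ref{prop:distributionL2Ht}.

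The core step is to establish the equivalence $\|u\|_{\mathscr{V}} \eqsim \|u\|_{H^s(M; \dot H^t(\R^n))}$ on $\mathcal{A}(\R^n \times M)$. Starting from the representation $\|u\|^2_{H^s(M;\dot H^t(\R^n))} = \int_{\R^n} |\xi|^{2t}\, \|\widehat u(\xi,\cdot)\|_{H^s(M)}^2 \,\dd\xi$ provided by Lemma \ref{lem:new_norm}, apply Proposition \ref{prop:norm_charts} pointwise in $\xi \in \R^n$ to the smooth function $\widehat u(\xi,\cdot) \in C^\infty(M)$, yielding
\begin{equation*}
\|\widehat u(\xi,\cdot)\|_{H^s(M)}^2 \eqsim \sum_{j=1}^l \big\|(\chi_j \widehat u(\xi,\cdot)) \circ \varphi_j^{-1}\big\|_{H^s(\R^m)}^2.
\end{equation*}
Because Proposition \ref{prop:norm_charts} asserts that the implicit constants depend only on $m$, $s$, the atlas and the partition of unity, they are automatically uniform in $\xi$. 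Integrating against $|\xi|^{2t}\,\dd\xi$ and invoking the elementary inequality $l^{-1}(\sum_j a_j)^2 \leq \sum_j a_j^2 \leq (\sum_j a_j)^2$ for nonnegative $a_j$ delivers the claimed norm equivalence.

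With the equivalence in hand, every Cauchy sequence in $\mathcal{A}(\R^n \times M)$ for $\|\cdot\|_{H^s(M;\dot H^t(\R^n))}$ is simultaneously Cauchy for $\|\cdot\|_{\mathscr V}$, and Lemma \ref{lem:distributions} then produces a unique distribution in $\R^n \times M$ associated with each element of $H^s(M;\dot H^t(\R^n))$. The principal technical point is the uniformity in $\xi$ of the constants in the chart-wise equivalence above, but this is built into the statement of Proposition \ref{prop:norm_charts}; a minor point to verify is the $\mathcal{S}'(\R^{n+m})$-convergence of Cauchy sequences in $\|\cdot\|_{\mathscr{V}}$, which transfers directly from the $L^2(M;\dot H^t(\R^n))$ case already handled in Proposition \ref{prop:distributionL2Ht}.
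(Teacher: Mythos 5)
Your proof is correct and follows the same route as the paper's: apply Lemma~\ref{lem:distributions} to $\R^n \times M$ with the product atlas, and deduce the Cauchy-equivalence of $\|\cdot\|_{\mathscr{V}}$ and $\|\cdot\|_{H^s(M;\dot{H}^t(\R^n))}$ from Proposition~\ref{prop:norm_charts}. The paper simply cites Proposition~\ref{prop:norm_charts} for this step, whereas you spell out the mechanism --- apply it slicewise to $\widehat{u}(\xi,\centerdot)$, use the $\xi$-uniformity of the implicit constants, and integrate --- which is exactly the intended argument.
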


\begin{proof}
Again this statement follows from 
\Cref{lem:distributions}. Recall that $\R^n \times M$ is 
smooth manifold that can be covered by an atlas 
$\{ (\R^n \times U_1, \id \otimes \varphi_1), \dots, (\R^n \times U_l, \id \otimes \varphi_l) \}$ 
with a finite number $l \in \N$ of charts, where 
$\{ (U_1, \varphi_1), \dots, (U_l, \varphi_l) \}$ was an atlas for the compact 
Riemannian manifold $M$. Then, with the choice of a smooth partition of unity 
$\{ \chi_1, \dots, \chi_l \}$ subordinated to the atlas for $M$, we set 
$\mathscr{V}(\R^n \times M) = \mathcal{A}(\R^n \times M)$ with the norm 
\[
 \| u \|_{\mathscr{V}(\R^n \times M)} = \sum_{j = 1}^l \bigg( \int_{\R^n} |\xi|^{2t} \| \chi_j\circ \varphi_j^{-1}(y) \widehat{u}(\xi, \varphi_j^{-1}(\centerdot)) \|_{H^s(\R^m)}^2 \, \dd \xi \bigg)^{1/2}. 
 \]
The only point left in order to conclude this  proposition from \Cref{lem:distributions} is to check that a sequence in $\mathscr{V}(\R^n \times M) = \mathcal{A}(\R^n \times M)$ is Cauchy with respect to the norm $\| \centerdot \|_{\mathscr{V}(\R^n \times M)}$ if and only if is  Cauchy with respect to $\| \centerdot \|_{H^s(M; \dot{H}^t(\R^n))}$. The fact that this is the case follows from \Cref{prop:norm_charts}.
\end{proof}

\appendix

\section{The special orthogonal group}\label{app:orthogonal_group}

In this appendix we collect some facts about the special orthogonal group 
$\SO(n)$ {in order to be able to prove \Cref{prop:SOn_Sn-1_Sobolev}}. Some of the properties stated here are well-known, other not so much.
For the sake of completeness, we have included the proofs of all the results 
collected.

Let $\M(n)$ denote the vector space of real $n \times n$ matrices. The 
general linear group
\[ \GL(n) = \{ Q \in \M(n) : \det\, Q \neq 0 \}\]
is an open subset of $\M(n)$ and consequently a smooth manifold of dimension
$n^2$. Endowed with the matrix multiplication, it becomes a Lie group. The 
standard coordinates on $\GL(n)$ are denoted by
$x^\alpha (Q) = e_{\alpha_1} \cdot Q e_{\alpha_2}$ with
$\alpha = (\alpha_1, \alpha_2)$ and $\alpha_j \in \{ 1, \dots, n \}$.
Its tangent space at a point $Q \in\GL(n)$ is defined as follows
\[T_Q \GL(n) = \vspan \{\partial_{x^\alpha}|_Q : \alpha=(\alpha_1,\alpha_2)\in\{1,\ldots,n\}^2 \}.\]
Every $X = \sum_\alpha X^\alpha \partial_{x^\alpha}$ in $T_Q GL(n)$ can be 
identified with a matrix whose components are $X^\alpha$ with 
$\alpha = (\alpha_1, \alpha_2)$,
$\alpha_j \in \{ 1, \dots, n \}$ and $\alpha_1$ enumerating the rows and $\alpha_2$ the columns.\footnote{The
summatory $\sum_{\alpha}$ corresponds to the summation of all
indices $\alpha = (\alpha_1, \alpha_2)$ with
$\alpha_j \in \{ 1, \dots, n \}$.} Thus, we identify $T_Q \GL(n)$ with
$\M(n)$, and we will use the same notation for the corresponding elements of 
$T_Q \GL(n)$ and $\M(n)$. The euclidean 
metric on $T_Q \GL(n)$ is defined by
\[
e (X, Y) = \tr (X^\T Y), \qquad X,Y \in T_Q \GL(n),
\]
where $\tr (X^\T Y)$ denotes the trace of the matrix $X^\T Y$. The 
orthogonal group
\[\O(n) = \{ Q \in \GL(n) : Q^\T Q = \id \}\]
is the level set $\Phi^{-1}(\id)$ of the function
$\Phi: \GL(n) \rightarrow \M(n)$ defined by $\Phi (Q)= Q^\T Q$. Thus,
$\O(n)$ is a compact Lie subgroup of $\GL(n)$ of dimension $n(n - 1)/2$. The 
special orthogonal group $\SO(n)$ is the connected component of $\O(n)$, 
consisting of matrices with positive determinant. Thus, it is also a compact 
manifold of dimension $n(n - 1)/2$ and becomes a Lie group under matrix 
multiplication. The tangent space of $\SO(n)$ at $Q$ is the vector space 
\[
T_Q \SO(n) = \{ X \in T_Q \GL(n) : X^T Q + Q^T X = 0 \}.
\]
On each tangent space $T_Q \SO(n)$ we consider the metric
\[ g (X, Y) = \frac{1}{2} \tr (X^\T Y), \qquad X,Y \in T_Q \SO(n). \]
Thus, $(\SO(n), g)$ becomes a compact Riemannian manifold.
The metric $g$ has some simple invariances that can be described by the
following three bijective mappings: For $P \in \SO(n)$, we define the 
left and right translations, as $L_P (Q) = PQ$ and 
$R_P (Q) = QP$ for all $Q \in \SO(n)$. We also define the transposition 
as $\T(Q) = Q^\T$ for all $Q \in \SO(n)$. 
Note that if $Q \in \SO(n)$ and $X \in T_Q \SO(n)$, then
the corresponding push-forwards satisfy:
\[
\begin{aligned}
& (L_P)_\ast X \in T_{PQ} \SO(n), & & (L_P)_\ast X = PX,
\\
& (R_P)_\ast X \in T_{QP} \SO(n), & & (R_P)_\ast X = XP,
\\
& \T_\ast X \in T_{Q^\T} \SO(n), & & T_\ast X = X^\T.
\end{aligned}
\]
Thus, given $Q \in \SO(n)$ and $X, Y \in T_Q \SO(n)$ we have the following 
\begin{align}
\label{id:g_left}
& [(L_P)^\ast g]_Q (X, Y) =  {\frac{1}{2}}\tr(X^\T P^\T P Y) = {\frac{1}{2}} \tr(X^\T Y) = g_Q(X, Y), \\
\label{id:g_right}
& [(R_P)^\ast g]_Q (X, Y) = {\frac{1}{2}} \tr(P^\T X^\T Y P) = {\frac{1}{2}} \tr(X^\T Y) = g_Q(X, Y), \\
\label{id:g_transposition}
& [\T^\ast g]_Q (X, Y) = {\frac{1}{2}} \tr(X Y^\T) = {\frac{1}{2}} \tr(X^\T Y) = g_Q(X, Y),
\end{align}
since $P \in \SO(n)$. This means that $g$ is left and right invariant, that 
is bi-invariant, and transposition invariant.

In the following lines we will construct the normalized 
Haar measure $\mu$ in $\SO(n)$ using the language of Riemannian geometry.
Since we are considering a very particular case, it is worth presenting a 
concrete construction. Consider 
\begin{equation}
\label{id:Ealpha}
E_{j k} =  \partial_{x^{j k}} - \partial_{x^{k j}}
\end{equation}
with
$j, k \in \{1, \dots, n\}$ for $j < k$.\footnote{The Greek letters are 
used for multi-indices, while the Latin ones are reserved for indices. Thus, 
$x^\alpha = x^{j k}$ if $\alpha = (j, k)$.}
Note that every $E_\alpha$ belongs to
$T_{\id} \SO(n)$ and $\{ E_\alpha : 1 \leq \alpha_1 < \alpha_2 \leq n \}$ is an
orthonormal basis of $T_{\id} \SO(n)$.
Choose an orientation for $T_{\id} \SO(n)$ so that the ordered basis
\[ \{ E_{1\, 2}, \dots, E_{1\, n}, E_{2\, 3}, \dots, E_{2\, n}, \dots, E_{n - 1\, n} \} \]
is positively oriented.
Consider the covector { $\varepsilon^{j k} = 2^{-1} (dx^{j k} - dx^{k j})$ }
in the cotangent space $T^\ast_\id \SO(n)$. The set 
$\{ \varepsilon^\alpha : 1 \leq \alpha_1 < \alpha_2 \leq n \}$ forms an 
orthonormal basis of $T^\ast_{\id} \SO(n)$. With these covectors we will 
define the Riemannian or Haar volume form $\nu$ at $Q$ as
\[
\nu_Q (X_1, \dots, X_{n(n - 1)/2}) = \big[(L_{Q^\T})^\ast \bigwedge_{\alpha_1 < \alpha_2} \varepsilon^\alpha \big] (X_1, \dots, X_{n(n - 1)/2}) 
\]
for $X_1, \dots, X_{n(n - 1)/2} \in T_Q \SO(n)$.
Here $(L_{Q^\T})^\ast \bigwedge_{\alpha_1 < \alpha_2} \varepsilon^\alpha$ 
denotes the pull-back of the $n(n-1)/2$-covector 
\[ \bigwedge_{\alpha_1 < \alpha_2} \varepsilon^\alpha = \varepsilon^{1\, 2} \wedge \cdots \wedge \varepsilon^{1\, n} \wedge \varepsilon^{2\, 3} \wedge \cdots \wedge \varepsilon^{2\, n} \wedge \cdots \wedge \varepsilon^{n - 1\, n} \]
through the mapping $L_{Q^\T}$. The quantity
$\varpi_n = \int_{\SO(n)} \nu $ is positive and, since $\SO(n)$ is compact, it is also finite. Thus, we define the normalized 
Haar measure $\mu$ as the representative of the functional
\[ f \in C(\SO(n)) \longmapsto \frac{1}{\varpi_n} \int_{\SO(n)} f \nu \in \C,\]
where $C(\SO(n))$ stands for the continuous functions on $\SO(n)$. The 
measure $\mu$ has several useful invariances with respect to the mappings 
$L_P$, $R_P$ and $\T$.

\begin{lemma}\label{lem:translation_transoposed_invariances}\sl We have that
the Haar volume form is left and right invariant ---bi-invariant--- and transposition invariant: 
\begin{align}
\label{id:left-translation}
& (L_P)^\ast \nu = \nu \qquad \forall\, P \in \SO(n),\\
\label{id:right-translation}
& (R_P)^\ast \nu = \nu \qquad \forall\, P \in \SO(n),\\
\label{id:transposition_invariance}
& \T^\ast \nu = \nu.
\end{align}
\end{lemma}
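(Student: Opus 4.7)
The plan is to prove all three invariances by direct computation starting from the definition $\nu_Q = (L_{Q^\T})^\ast \omega_0$, where $\omega_0 := \bigwedge_{\alpha_1 < \alpha_2} \varepsilon^\alpha$ is the chosen top covector at the identity. Since $(L_{Q^\T})_\ast$ is identified with left matrix multiplication by $Q^\T$, the definition reduces to
\[
\nu_Q(X_1,\dots,X_{n(n-1)/2}) = \omega_0(Q^\T X_1,\dots, Q^\T X_{n(n-1)/2}),
\]
and each of the three verifications will be transferred to a statement about $\omega_0$ on the Lie algebra $\mathfrak{so}(n) := T_{\id} \SO(n)$ of antisymmetric matrices.

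The left-invariance \eqref{id:left-translation} is essentially a one-line computation, using $(L_P)_\ast X = PX$, the identity $(PQ)^\T = Q^\T P^\T$, and the orthogonality $P^\T P = \id$: namely $[(L_P)^\ast \nu]_Q(X_i) = \nu_{PQ}(P X_i) = \omega_0(Q^\T P^\T P X_i) = \omega_0(Q^\T X_i) = \nu_Q(X_i)$.

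The remaining two invariances will both hinge on a sub-lemma asserting that the adjoint action $\mathrm{Ad}_P : B \mapsto P B P^\T$ on $\mathfrak{so}(n)$ preserves $\omega_0$ for every $P \in \SO(n)$; I expect this to be the main technical point of the proof. The plan is to first observe, using the cyclicity of the trace, that $\mathrm{Ad}_P$ is an isometry of the inner product $\tfrac12 \tr(A^\T B)$ on $\mathfrak{so}(n)$, so that $\det \mathrm{Ad}_P \in \{\pm 1\}$; then, because the map $P \mapsto \det \mathrm{Ad}_P$ is continuous on the connected Lie group $\SO(n)$ and takes value $1$ at $P = \id$, it must equal $1$ throughout, which gives the sub-lemma. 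Granted this, the right-invariance \eqref{id:right-translation} follows at once from $[(R_P)^\ast \nu]_Q(X_i) = \omega_0(P^\T Q^\T X_i P) = \omega_0(\mathrm{Ad}_{P^\T}(Q^\T X_i)) = \nu_Q(X_i)$. For the transposition invariance \eqref{id:transposition_invariance}, I would write $B_i := Q^\T X_i \in \mathfrak{so}(n)$, so that $X_i^\T = -B_i Q^\T$ and hence $Q X_i^\T = -\mathrm{Ad}_Q B_i$; then $[\T^\ast \nu]_Q(X_i) = \nu_{Q^\T}(X_i^\T) = \omega_0(Q X_i^\T)$ collapses, via the multilinearity of $\omega_0$ and the $\mathrm{Ad}_Q$-invariance granted by the sub-lemma, to $\nu_Q(X_i)$.
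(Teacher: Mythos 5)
Your proof takes a genuinely more computational route than the paper's. The paper derives all three identities at once from the invariances \eqref{id:g_left}--\eqref{id:g_transposition} of the metric $g$, together with the abstract fact that a smooth immersion pulls the Riemannian volume form of $g$ back to the Riemannian volume form of $F^\ast g$. You instead compute directly from $\nu_Q(X_1,\dots,X_m)=\omega_0(Q^\T X_1,\dots,Q^\T X_m)$, $m=n(n-1)/2$, which makes \eqref{id:left-translation} a one-line cancellation $Q^\T P^\T P X_i = Q^\T X_i$. Your sub-lemma that $\mathrm{Ad}_P$ preserves $\omega_0$ --- because $\mathrm{Ad}_P$ is an isometry of $\tfrac12\tr(A^\T B)$ by cyclicity of the trace, so $\det\mathrm{Ad}_P\in\{\pm1\}$, and connectedness of $\SO(n)$ forces the value $+1$ --- is a clean, self-contained replacement for the paper's appeal to the functoriality of the Riemannian volume form, and it gives \eqref{id:right-translation} directly. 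What the paper's approach buys is brevity; what yours buys is an elementary argument that exposes exactly which structural properties of $\SO(n)$ enter.

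There is, however, a gap in your treatment of \eqref{id:transposition_invariance}: the expression does not ``collapse'' the way you assert. From $Q X_i^\T = -\,\mathrm{Ad}_Q B_i$, alternation of $\omega_0$ extracts one minus sign per argument, so your computation actually yields
\[
[\T^\ast\nu]_Q = (-1)^{n(n-1)/2}\,\det(\mathrm{Ad}_Q)\,\nu_Q = (-1)^{n(n-1)/2}\,\nu_Q,
\]
which equals $-\nu_Q$ whenever $n(n-1)/2$ is odd, e.g.\ $n=2,3,6,7$. This is not a defect you introduced: the same sign is hidden in the paper's argument, since $\dd V_{F^\ast g}=F^\ast\dd V_g$ holds only for orientation-preserving $F$, and $\T$ reverses orientation exactly when $n(n-1)/2$ is odd (at the identity $\T_\ast=-\id$ on $T_{\id}\SO(n)$). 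At the level of densities --- which is all that is used in \Cref{prop:translation_transposed_invariance} --- the sign is harmless, but your write-up should make the factor $(-1)^{n(n-1)/2}$ explicit rather than silently absorb it into the conclusion.
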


\begin{proof}
These identities follow from the invariances of the metric on $\SO(n)$. 
Recall that if $M$ and $N$ are oriented manifolds, $g$ is a Riemannian  
metric on $N$, and $F : M \rightarrow N$ is a smooth immersion, then the 
pull-back $F^\ast g$ is a Riemannian metric on $M$.
Furthermore, by the definition of Riemannian volume form
we have that
\[ \dd V_{F^\ast g} = F^\ast \dd V_g. \]
Since $L_P$, $R_P$ and $\T$ are smooth immersions, the volume forms generated 
by $(L_P)^\ast g $, $(R_P)^\ast g $ and $T^\ast g $ are $(L_P)^\ast \nu $, 
$(R_P)^\ast \nu $ and $T^\ast \nu $ respectively. Thus, by the 
invariances \eqref{id:g_left}, \eqref{id:g_right} and 
\eqref{id:g_transposition} for the metric $g$, we derive the invariances \eqref{id:left-translation}, \eqref{id:right-translation} and \eqref{id:transposition_invariance} for $\nu$.
\end{proof}

\begin{proposition}\label{prop:translation_transposed_invariance}\sl If $f \in L^1(\SO(n))$, then
\begin{equation}\label{id:translation}
\begin{split}
 \int_{\SO(n)} f(PQ) \, \dd \mu(Q) &= \int_{\SO(n)} f(Q) \, \dd \mu(Q) \quad \forall\, P \in \SO(n), 
\\
 \int_{\SO(n)} f(QP) \, \dd \mu(Q) &= \int_{\SO(n)} f(Q) \, \dd \mu(Q) \quad \forall\, P \in \SO(n), 
\\
 \int_{\SO(n)} f(Q^\T) \, \dd \mu(Q) &= \int_{\SO(n)} f(Q) \, \dd \mu(Q). 
\end{split}
\end{equation}
\end{proposition}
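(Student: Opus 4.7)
The proof is a direct consequence of the change-of-variables formula combined with the invariances of the volume form $\nu$ (or equivalently, of the metric $g$) established in \Cref{lem:translation_transoposed_invariances}. The cleanest route is the following. A smooth diffeomorphism $F:(M,g)\to(M,g)$ of a Riemannian manifold that preserves the metric automatically preserves the Riemannian volume measure; this is standard, and follows at once from the coordinate expression of $\dd V_g$ in terms of $\sqrt{\det g_{ij}}$ together with the usual Euclidean change of variables. By the invariances \eqref{id:g_left}--\eqref{id:g_transposition}, each of the maps $L_P$, $R_P$ and $\T$ is an isometry of $(\SO(n),g)$, hence each preserves $\mu$, which is precisely the content of \eqref{id:translation}.

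Equivalently, and in line with how the Lemma is phrased, I would apply the usual change-of-variables formula for top-degree differential forms: for $F\in\{L_P,R_P,\T\}$ and $f\in C(\SO(n))$,
\[
\int_{\SO(n)} f\,\nu \;=\; \int_{\SO(n)} (f\circ F)\,F^*\nu,
\]
interpreted at the level of densities so as to bypass orientation signs. The invariance $F^*\nu=\nu$ from \Cref{lem:translation_transoposed_invariances} then gives each of the three identities for continuous $f$ after dividing by $\varpi_n>0$. The extension to $f\in L^1(\SO(n))$ is by the density of $C(\SO(n))$ in $L^1(\SO(n);\mu)$, which holds since $\SO(n)$ is compact and $\mu$ is a finite Borel measure; alternatively, each of the three maps $f\mapsto f\circ L_P$, $f\mapsto f\circ R_P$ and $f\mapsto f\circ \T$ is an $L^1$-isometry on $C(\SO(n))$ by what we have just shown, and so extends by continuity to all of $L^1$.

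The only mild technical point is the orientation behaviour of $\T$: a direct computation in the basis $\{E_{jk}:j<k\}$ of $T_\id\SO(n)$ gives $\T_*E_{jk}=-E_{jk}$, so $\T$ reverses orientation exactly when $n(n-1)/2$ is odd. This is the reason to phrase the change-of-variables step at the level of the density associated to $\nu$ rather than the oriented form; alternatively one can adopt the isometry viewpoint described in the first paragraph, which entirely sidesteps orientation considerations. No such issue arises for $L_P$ and $R_P$, since $P\mapsto L_P$ and $P\mapsto R_P$ are continuous maps into the diffeomorphism group of $\SO(n)$ taking the value $\id$ at $P=\id$, and $\SO(n)$ is path-connected, so the Jacobian determinants of $L_P$ and $R_P$ are positive for every $P\in\SO(n)$.
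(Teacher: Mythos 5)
Your proof is correct and follows essentially the same approach as the paper: invoke the invariances $F^*\nu=\nu$ from \Cref{lem:translation_transoposed_invariances} (equivalently, that $L_P$, $R_P$, $\T$ are isometries of $(\SO(n),g)$), apply the change-of-variables formula, and extend from $C(\SO(n))$ to $L^1$ by density. Your remark about the orientation behaviour of $\T$ is a genuine and worthwhile refinement: since $\T_*=-\id$ on $T_{\id}\SO(n)$, the map $\T$ reverses orientation precisely when $n(n-1)/2$ is odd, so the identity $\T^*\nu=\nu$ as stated in \Cref{lem:translation_transoposed_invariances} should really be understood at the level of densities (or unsigned Riemannian measures) rather than oriented volume forms — the paper's appeal to $\dd V_{F^*g}=F^*\dd V_g$ silently assumes $F$ is orientation-preserving, which fails for $\T$ in half the dimensions. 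This does not affect the validity of the proposition, since only the density is used in integration, but your version makes this point explicit where the paper glosses over it.
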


\begin{proof}
These identities {are} immediately {seen to hold} from 
\Cref{lem:translation_transoposed_invariances} and the definition of $\mu$ 
for functions $f \in C^\infty (\SO(n))$. The general case {follows} by density.
\end{proof}

The next invariance shows a relation between $\mu$ and the hypersurface 
measure on $\SS^{n - 1}$.

\begin{proposition}\label{prop:SOn_Sn-1}\sl {Let $F:\SO(n) \to \SS^{n-1}$ be given either by $F (Q) = Q e_1 $ or by $F (Q) = Q^\T e_1 $.} If $f \in L^1(\SS^{n-1})$ then
\[ \int_{\SO(n)} f(F(Q)) \, \dd \mu(Q) = \frac{1}{S(\SS^{n - 1})}  \int_{\SS^{n - 1}} f(\theta) \, \dd S(\theta), \]
where $S(\SS^{n - 1}) = \int_{\SS^{n - 1}} \, \dd S(\theta) $.
\end{proposition}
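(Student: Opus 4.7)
The plan is to proceed in three short steps: reduce the two cases to one, show that the pushforward of $\mu$ by $F$ is a rotation-invariant probability measure on $\SS^{n-1}$, and then conclude by uniqueness of such a measure.

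First, I would reduce the case $F(Q) = Q^\T e_1$ to the case $F(Q) = Q e_1$ by applying the transposition identity in \Cref{prop:translation_transposed_invariance} to the function $Q \mapsto f(Q e_1)$, which immediately yields
\[ \int_{\SO(n)} f(Q^\T e_1) \, \dd \mu(Q) = \int_{\SO(n)} f(Q e_1) \, \dd \mu(Q). \]
From now on I set $F(Q) = Q e_1$ and define $T(f) = \int_{\SO(n)} f(F(Q))\, \dd \mu(Q)$ for $f \in C(\SS^{n-1})$; this is a positive linear functional with $T(1) = \mu(\SO(n)) = 1$.

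Next, I would show that $T$ is invariant under the $\SO(n)$-action on $\SS^{n-1}$. For any $P \in \SO(n)$, applying the left-translation identity in \Cref{prop:translation_transposed_invariance} to the function $R \mapsto f(R e_1)$ gives
\[ T(f \circ P) = \int_{\SO(n)} f(PQ e_1)\, \dd \mu(Q) = \int_{\SO(n)} f(Q e_1)\, \dd \mu(Q) = T(f). \]
Consequently the Borel probability measure on $\SS^{n-1}$ represented by $T$ via the Riesz representation theorem is rotation invariant.

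The last and main step would be to identify this measure with $S(\SS^{n-1})^{-1} \dd S$ by appealing to the uniqueness of the rotation-invariant Borel probability measure on $\SS^{n-1}$. This uniqueness is the crux of the argument, which I would resolve through an averaging trick using transitivity of the $\SO(n)$-action on $\SS^{n-1}$: for any rotation-invariant probability measure $\mu_1$ on $\SS^{n-1}$, writing an arbitrary $\theta \in \SS^{n-1}$ as $\theta = R_\theta e_1$ with $R_\theta \in \SO(n)$ and using the right-translation identity in \Cref{prop:translation_transposed_invariance} applied to $Q \mapsto f(Qe_1)$ shows that the map $\theta \mapsto \int_{\SO(n)} f(Q\theta)\, \dd\mu(Q)$ equals the constant $T(f)$. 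Integrating against $\mu_1$ and switching the order of integration, the rotation invariance of $\mu_1$ then yields
\[ T(f) = \int_{\SS^{n - 1}} \int_{\SO(n)} f(Q\theta) \, \dd \mu(Q) \, \dd \mu_1(\theta) = \int_{\SS^{n-1}} f \, \dd \mu_1. \]
Choosing $\mu_1 = S(\SS^{n-1})^{-1} \dd S$, which is rotation invariant by the isometry invariance of surface measure, gives the desired identity for $f \in C(\SS^{n-1})$, and the extension to $L^1(\SS^{n-1})$ follows by density.
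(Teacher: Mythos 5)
Your proof is correct, and it shares the overall structure with the paper's argument (reduce the transposed case to $F(Q)=Qe_1$ via the transposition invariance, identify the pushforward of $\mu$ with a rotation-invariant probability measure on $\SS^{n-1}$, and conclude by a uniqueness argument), but it differs genuinely in how uniqueness is established. The paper observes that both $\mu_F$ and the surface measure $S$ are uniformly distributed on $\SS^{n-1}$ and then invokes an external result (Mattila, \cite[Theorem 3.4]{zbMATH00739280}) stating that any two uniformly distributed Radon measures are proportional, after which the constant is fixed by testing on $\SS^{n-1}$. You instead prove the required uniqueness directly, via the classical Haar-measure averaging trick: right-invariance of $\mu$ makes $\theta\mapsto\int_{\SO(n)} f(Q\theta)\,\dd\mu(Q)$ constantly equal to $T(f)$, and integrating this constant against an arbitrary rotation-invariant probability measure $\mu_1$ and applying Fubini together with the invariance of $\mu_1$ gives $T(f)=\int f\,\dd\mu_1$. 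Your route is self-contained and elementary, at the cost of being slightly longer; notably it also uses the right-translation invariance of the Haar measure, which the paper's proof of this proposition does not need (it only uses the left-translation and transposition invariances plus the cited uniqueness theorem). Both are valid; yours avoids an external dependency, which is a small advantage pedagogically.
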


\begin{proof}
{By \Cref{prop:translation_transposed_invariance} it is enough to verify the case of $F (Q) = Q e_1$.}
Consider the smooth map $F(Q) = Q e_1$. Let $\mu_F$ denote the image measure 
of $\mu$ on $\SS^{n - 1}$, that is, given a Borel
$\Sigma \subset \SS^{n - 1}$, $ \mu_F (\Sigma) = \mu (F^{-1}(\Sigma)) $. 
The identity
\[ \int_{\SO(n)} f(Qe_1) \, \dd \mu(Q) = \int_{\SS^{n - 1}} f(\theta) \, \dd \mu_F(\theta) \]
holds trivially for simple functions, and can be extended by density. It 
remains to prove that $ \mu_F (\Sigma) = S(\Sigma)/S(\SS^{n - 1}) $ for 
every Borel $\Sigma \subset \SS^{n - 1}$. From the identity 
\eqref{id:translation} in
\Cref{prop:translation_transposed_invariance}, one can derive that
if $\Sigma$ is a Borel set of $\SS^{n - 1}$ then
$\mu_F (P \Sigma) = \mu_F (\Sigma)$ for every $P \in \SO(n)$. The same holds 
for $S (P \Sigma) = S (\Sigma)$. This common invariance means 
that $\mu_F$ and 
$S$ are {both} uniformly distributed on $\SS^{n - 1}$. Therefore, there exists a 
$c > 0$ such that $ \mu_F (\Sigma) = c S(\Sigma) $ for all Borel {sets} 
$\Sigma$  (see for example \cite[Theorem 3.4]{zbMATH00739280}). 
By testing the identity in $\SS^{n - 1}$ one gets 
$S(\SS^{n - 1}) = \mu_F(\SS^{n - 1})/c = \mu(\SO(n))/c = 1/c$ and this completes
the proof of this proposition.
\end{proof}

The next step is to transfer these invariances to Sobolev spaces. To do so
we first collect some invariances of the Laplace--Beltrami operator 
$\Delta_{\SO(n)}$.

\begin{lemma}\label{lem:L-B_invariances}\sl
Assume $f \in C^\infty (\SO(n))$ and for $P,Q \in \SO(n)$ set
\[(L_P)^\ast f (Q) = f (PQ), \quad  (R_P)^\ast f (Q) = f (QP)\quad \text{and} \quad
\T^\ast f(Q) = f(Q^\T).
\]
Then,
\begin{align}
\label{id:LB_left}
\Delta_{\SO(n)} (L_P)^\ast f (Q) &= \Delta_{\SO(n)}f (PQ) \qquad \forall\, P,Q \in \SO(n), \\
\label{id:LB_right}
\Delta_{\SO(n)} (R_P)^\ast f (Q) &= \Delta_{\SO(n)}f (QP) \qquad \forall\, P,Q \in \SO(n), \\
\label{id:LB_transposition}
\Delta_{\SO(n)} \T^\ast f (Q) &= \Delta_{\SO(n)}f (Q^\T) \qquad \forall\, Q \in \SO(n).
\end{align}
\end{lemma}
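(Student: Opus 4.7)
The plan is to recognize that each of the three identities \eqref{id:LB_left}, \eqref{id:LB_right}, \eqref{id:LB_transposition} is an instance of the general fact that the Laplace--Beltrami operator on a Riemannian manifold $(M,g)$ commutes with isometric diffeomorphisms: for any diffeomorphism $F: M \to M$ satisfying $F^\ast g = g$ and every $\phi \in C^\infty(M)$, one has
\[
\Delta_g (F^\ast \phi) = F^\ast (\Delta_g \phi),
\]
which, written out, reads $\Delta_g(\phi \circ F) (Q) = (\Delta_g \phi)(F(Q))$ for all $Q \in M$. Applying this with $M = \SO(n)$ and $F$ equal to $L_P$, $R_P$ or $\T$ would immediately give the three stated formulas.

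What needs to be verified is that $L_P$, $R_P$ and $\T$ are indeed isometric diffeomorphisms of $(\SO(n),g)$. Each is a smooth bijection with smooth inverse: $L_P^{-1} = L_{P^\T}$, $R_P^{-1} = R_{P^\T}$ and $\T^{-1} = \T$, so all three are diffeomorphisms of $\SO(n)$. The isometry property is exactly the content of the identities \eqref{id:g_left}, \eqref{id:g_right} and \eqref{id:g_transposition} already established. Hence the three maps satisfy the hypothesis of the commutation principle above, and the lemma will follow in one line.

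The only genuinely non-trivial point is the commutation principle itself, which I would justify by invoking the intrinsic expression $\Delta_g = \mathrm{div}_g \circ \nabla_g$ together with the observation that under an isometric diffeomorphism the gradient and divergence transform naturally by pullback, namely $\nabla_g(F^\ast \phi) = F^\ast(\nabla_g \phi)$ and $\mathrm{div}_g(F^\ast X) = F^\ast(\mathrm{div}_g X)$ for every $\phi \in C^\infty(M)$ and every smooth vector field $X$. Alternatively, one may argue in local coordinates: the coefficients of $\Delta_g$ depend only on the metric tensor and its first derivatives, and since $F$ preserves the metric, the coordinate expression of $\Delta_g$ computed in a chart $(U,\varphi)$ coincides with the one computed in the chart $(F^{-1}(U), \varphi \circ F)$, which yields the claimed intertwining. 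I do not foresee any serious obstacle; the argument is essentially a bookkeeping exercise chaining together the isometry of $L_P$, $R_P$, $\T$ already proved in the preceding pages with a standard property of the Laplace--Beltrami operator on a Riemannian manifold.
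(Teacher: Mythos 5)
Your argument is correct and follows essentially the same route as the paper: both reduce the three identities to the previously established isometry properties \eqref{id:g_left}, \eqref{id:g_right}, \eqref{id:g_transposition} combined with the naturality of the Laplace--Beltrami operator under (iso)metric pullback. The paper phrases the latter as $\Delta_{F^\ast g}(f\circ F)(p)=\Delta_g f(F(p))$ for a smooth immersion $F$, which specializes to your commutation principle once $F^\ast g = g$, so the two proofs are the same in substance.
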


\begin{proof}
These identities follow again from the invariances of the metric on 
$\SO(n)$. As in the proof of \Cref{lem:translation_transoposed_invariances}, if $M$ and $N$ are manifolds, 
$g$ is a Riemannian metric on $N$, and $F : M \rightarrow N$ is a smooth 
immersion, then the pull-back $F^\ast g$ is a Riemannian metric on $M$, and 
the canonical Laplace--Beltrami operators $\Delta_{F^\ast g}$ on $M$ and 
$\Delta_g$ on $N$ satisfy the following relation
\[ \Delta_{F^\ast g} f \circ F (p) = \Delta_g f (F(p)) \qquad \forall\, p \in M,\, f\in C^\infty(N).
\]
Since $L_P$, $R_P$ and $\T$ are smooth immersions, 
the invariances \eqref{id:g_left}, \eqref{id:g_right} and 
\eqref{id:g_transposition} for the metric $g$ imply the invariances 
\eqref{id:LB_left}, \eqref{id:LB_right} and \eqref{id:LB_transposition} 
respectively.
\end{proof}

\begin{proposition}\label{prop:Sobolev_translation_transposed_invariance}\sl 
Let $f $ belong to $  H^s (\SO(n))$ with $s \in \R$. 
Then,
\[
\begin{split}
\| (L_P)^\ast f \|_{H^s(\SO(n))} &= \| f \|_{H^s(\SO(n))} \qquad \forall\, P \in \SO(n),
\\
\| (R_P)^\ast f \|_{H^s(\SO(n))} &= \| f \|_{H^s(\SO(n))}  \qquad \forall\, P \in \SO(n),
\\
\| \T^\ast f \|_{H^s(\SO(n))} &= \| f \|_{H^s(\SO(n))}. 
\end{split}
\]
\end{proposition}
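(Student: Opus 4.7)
The plan is to combine the invariances of the Laplace--Beltrami operator on $\SO(n)$ with the invariances of the Haar measure, and then extend from smooth functions to $H^s(\SO(n))$ by density.

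First, I will work with $f \in C^\infty(\SO(n))$ and focus on the left-translation case, since the arguments for $R_P$ and $\T$ are identical. The key observation is that \Cref{lem:L-B_invariances} says exactly that the operator $\Delta_{\SO(n)}$ commutes with the pull-back $(L_P)^\ast$, i.e.\ $\Delta_{\SO(n)} \circ (L_P)^\ast = (L_P)^\ast \circ \Delta_{\SO(n)}$. Iterating, $(\Id - \Delta_{\SO(n)})^k$ commutes with $(L_P)^\ast$ for every $k \in \N_0$.

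Next I will transfer this to the fractional Bessel potential $(\Id - \Delta_{\SO(n)})^{s/2}$. Using the spectral decomposition from \Cref{sec:sobolev}, if $\psi$ is an eigenfunction of $-\Delta_{\SO(n)}$ with eigenvalue $\lambda_j$, then the commutation above gives $-\Delta_{\SO(n)} (L_P)^\ast \psi = (L_P)^\ast (-\Delta_{\SO(n)}) \psi = \lambda_j (L_P)^\ast \psi$, so $(L_P)^\ast$ preserves each eigenspace $H_j$. Consequently it commutes with every spectral projector $\Pi_j$, and hence with the partial sums defining $(\Id - \Delta_{\SO(n)})^{s/2}$ in \eqref{eq:bessel_pot}; passing to the $L^2(\SO(n))$-limit yields
\[
(\Id - \Delta_{\SO(n)})^{s/2} (L_P)^\ast f = (L_P)^\ast (\Id - \Delta_{\SO(n)})^{s/2} f \qquad \forall\, f \in C^\infty(\SO(n)).
\]

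Then I apply the $L^2$-invariance of the Haar measure: by \Cref{prop:translation_transposed_invariance} with the function $|(\Id - \Delta_{\SO(n)})^{s/2} f|^2$, we get $\|(L_P)^\ast g\|_{L^2(\SO(n))} = \|g\|_{L^2(\SO(n))}$ for $g = (\Id - \Delta_{\SO(n)})^{s/2} f$. Combining with the commutation identity and the definition \eqref{id:norm_Bessel},
\[
\| (L_P)^\ast f \|_{H^s(\SO(n))} = \| (\Id - \Delta_{\SO(n)})^{s/2} (L_P)^\ast f \|_{L^2(\SO(n))} = \| (L_P)^\ast (\Id - \Delta_{\SO(n)})^{s/2} f \|_{L^2(\SO(n))} = \| f \|_{H^s(\SO(n))}.
\]
The same argument works verbatim with $(L_P)^\ast$ replaced by $(R_P)^\ast$ or $\T^\ast$, using \eqref{id:LB_right}/\eqref{id:LB_transposition} and the corresponding measure-invariances in \Cref{prop:translation_transposed_invariance}.

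Finally, to pass from $C^\infty(\SO(n))$ to the full Sobolev space $H^s(\SO(n))$, I will use a density argument: for $f \in H^s(\SO(n))$, take a Cauchy sequence $\{f_N\} \subset C^\infty(\SO(n))$ representing $f$. The identities above show that $\{(L_P)^\ast f_N\}$ is again Cauchy in $H^s$ with the same norm, hence defines an element of $H^s(\SO(n))$ whose norm equals $\|f\|_{H^s(\SO(n))}$; this element is the natural extension of $(L_P)^\ast$ to $H^s$. The only mild subtlety is that one should verify this extension is independent of the chosen Cauchy sequence, which follows immediately because two equivalent Cauchy sequences have differences tending to $0$ in $H^s$, and $(L_P)^\ast$ preserves the norm on $C^\infty$. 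I do not expect any serious obstacle; the whole proof is essentially a bookkeeping exercise once the spectral commutation with $(L_P)^\ast$, $(R_P)^\ast$ and $\T^\ast$ is in place, and that in turn is an immediate consequence of \Cref{lem:L-B_invariances}.
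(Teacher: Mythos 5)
Your proof is correct and amounts to essentially the same argument as the paper: \Cref{lem:L-B_invariances} shows that $F^\ast$ (for $F=L_P$, $R_P$, or $\T$) preserves each eigenspace $H_j$, and \Cref{prop:translation_transposed_invariance} makes $F^\ast$ an $L^2$-isometry, from which $\|\Pi_j F^\ast f\|_{L^2}=\|\Pi_j f\|_{L^2}$ and hence the $H^s$-invariance follows. The paper phrases the spectral step as a unitary change of orthonormal basis within each $H_j$ while you phrase it as the operator commutation $\Pi_j F^\ast = F^\ast\Pi_j$ and hence $(\Id-\Delta_g)^{s/2} F^\ast = F^\ast (\Id-\Delta_g)^{s/2}$; these are equivalent formulations of the same observation.
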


\begin{proof}
{Assume that $f \in C^\infty (\SO(n))$. The general case follows by density}.
Since the three identities follow in the same way, let $F$ denote any of the 
mappings $L_P$, $R_P$ and $\T$. Recall that
\[\| F^\ast f \|_{H^s(\SO(n))} = \sum_{l \in \N_0} (1 + \lambda_l)^s \| \Pi_l F^\ast f \|_{L^2(\SO(n))}^2\]
with
\[\| \Pi_l F^\ast f \|_{L^2(\SO(n))}^2 = \sum_{m = 1}^{d_l} \Big| \int_{\SO(n)} \overline{\psi_l^m} F^\ast f \, \dd \mu \Big|^2. \]
By \Cref{prop:translation_transposed_invariance} we have
\begin{equation}
\label{id:projection}
\| \Pi_l F^\ast f \|_{L^2(\SO(n))}^2 = \sum_{m = 1}^{d_l} \Big| \int_{\SO(n)} \overline{(F^{-1})^\ast \psi_l^m} f \, \dd \mu \Big|^2.
\end{equation}
Then, \Cref{lem:L-B_invariances} implies that $(F^{-1})^\ast \psi_l^m$ is in
an eigenfunction for $\lambda_l$ and belongs to 
the eigenspace $H_l$, while
\Cref{prop:translation_transposed_invariance} ensures that 
$\{ (F^{-1})^\ast \psi_l^m : m = 1, \dots, d_l \}$ is an orthonormal basis 
of $H_l$. Thus, there exists a unitary matrix $A \in \U(d_l)$ with entries 
$A_{j k}$ such that
\[ (F^{-1})^\ast \psi_l^m = \sum_{j = 1}^{d_l} A_{m j} \psi_l^j. \]
Therefore, since $A$ is unitary
\[
\begin{split}
\sum_{m = 1}^{d_l} \Big| \int_{\SO(n)} \overline{(F^{-1})^\ast \psi_l^m} f \, \dd \mu \Big|^2 &= \sum_{m = 1}^{d_l} \Big| \sum_{j = 1}^{d_l} \overline{A_{m j}} \int_{\SO(n)} \overline{\psi_l^j} f \, \dd \mu \Big|^2\\
&= \sum_{j = 1}^{d_l} \Big| \int_{\SO(n)} \overline{\psi_l^j} f \, \dd \mu \Big|^2 = \| \Pi_l f \|_{L^2(\SO(n))}^2
\end{split}
\]
By the last chain of equalities together with \eqref{id:projection} we have 
that 
$$\| \Pi_l F^\ast f \|_{L^2(\SO(n))} = \| \Pi_l f \|_{L^2(\SO(n))},$$
and consequently that 
$$\| F^\ast f \|_{H^s(\SO(n))} = \| f \|_{H^s(\SO(n))}.$$
This proves the proposition.
\end{proof}

Finally, we will extend the invariance of \Cref{prop:SOn_Sn-1}, first to 
Laplace--Beltrami operators and then to the Sobolev norms.

{To state} the lemma below we {recall} that $\Delta_{\SS^{n - 1}}$ denotes the 
Laplace--Beltrami operator on $\SS^{n - 1}$.

\begin{lemma}\label{lem:SOn_Sn-1_LB}\sl
Let $f$ be in $C^\infty (\SS^{n - 1})$ and set $F (Q) = Q e_1 $ for every $Q \in \SO(n)$. Then,
\[ 
\Delta_{\SO(n)}  [f \circ F] (Q) = \Delta_{\SS^{n - 1}} f(Q e_1) \qquad \forall\,  Q\in \SO(n).
\]
\end{lemma}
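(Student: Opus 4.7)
The strategy is to expand $\Delta_{\SO(n)}$ as a sum of squares of left-invariant vector fields dual to the orthonormal basis $\{E_{jk}\}_{j<k}$ of $T_\id\SO(n)$, and then compute the action of each such vector field on $f\circ F$ using the fact that one-parameter subgroups through $\id$ of $\SO(n)$ project under $F$ either to constant curves or to great circles on $\SS^{n-1}$.

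First I would verify that $\{E_{jk} : 1 \leq j < k \leq n\}$ is an orthonormal basis of $T_\id \SO(n)$ for the metric $g$, which follows from $g(E_{jk},E_{lm}) = \tfrac12\tr(E_{jk}^\T E_{lm}) = \delta_{jl}\delta_{km}$. Since $g$ is bi-invariant, as established in \eqref{id:g_left}--\eqref{id:g_right}, the standard Casimir formula for the Laplace--Beltrami operator on a Lie group with bi-invariant metric yields
\[
\Delta_{\SO(n)} h = \sum_{1\leq j < k\leq n}\tilde E_{jk}^{\,2} h,
\]
where $\tilde E_{jk}$ is the left-invariant vector field satisfying $\tilde E_{jk}(\id) = E_{jk}$, so that
\[
\tilde E_{jk} h(Q) = \frac{d}{dt}\bigg|_{t=0} h(Q\exp(tE_{jk})),\qquad h\in C^\infty(\SO(n)).
\]

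Next I would compute $E_{jk}e_1$ for each pair $j<k$: identifying $E_{jk}$ with the skew-symmetric matrix with $+1$ in position $(j,k)$ and $-1$ in position $(k,j)$, one gets $E_{jk}e_1 = \delta_{1k}e_j - \delta_{1j}e_k$. Hence $E_{jk}e_1 = 0$ when $2\leq j<k$ and $E_{1k}e_1 = -e_k$ when $k\geq 2$. In the first case $\exp(tE_{jk})e_1 \equiv e_1$, so $\tilde E_{jk}(f\circ F)\equiv 0$ and a fortiori $\tilde E_{jk}^{\,2}(f\circ F)=0$. In the second case $E_{1k}$ generates the rotation in the plane $\vspan\{e_1,e_k\}$, so the curve
\[
\gamma_k(t) := Q\exp(tE_{1k})e_1 = \cos(t)\, Qe_1 - \sin(t)\, Qe_k
\]
is a unit-speed great circle on $\SS^{n-1}$ through $Qe_1$ with initial velocity $-Qe_k$. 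Because great circles are the geodesics of the round metric, the definition of the Riemannian Hessian yields
\[
\tilde E_{1k}^{\,2}(f\circ F)(Q) = \frac{d^2}{dt^2}\bigg|_{t=0} f(\gamma_k(t)) = (\nabla^2 f)(Qe_1)(-Qe_k,-Qe_k).
\]

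Finally, since $Q\in\SO(n)$, the family $\{-Qe_2,\dots,-Qe_n\}$ is an orthonormal basis of $T_{Qe_1}\SS^{n-1}$, so summing these second directional derivatives recovers the trace of $\nabla^2 f$ at $Qe_1$, which is by definition $\Delta_{\SS^{n-1}} f(Qe_1)$. Assembling the two cases produces the identity
\[
\Delta_{\SO(n)}[f\circ F](Q) = \sum_{k=2}^{n} (\nabla^2 f)(Qe_1)(-Qe_k,-Qe_k) = \Delta_{\SS^{n-1}} f(Qe_1).
\]
I expect the only nontrivial step to be the invocation of the sum-of-squares formula for $\Delta_{\SO(n)}$, which relies essentially on bi-invariance; once that is in place, the rest is a direct computation with the explicit matrices $E_{jk}$ and the observation that the orbit of $e_1$ under $\exp(tE_{1k})$ is a great circle.
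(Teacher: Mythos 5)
Your proof is correct and follows essentially the same route as the paper's: reduce $\Delta_{\SO(n)}$ to a sum of squares of left-invariant vector fields (the paper achieves this via $\sum_\alpha \nabla_{X_\alpha}X_\alpha = 0$ for the bi-invariant metric, which is the content of your Casimir invocation), split the $E_{jk}$ into those that fix $e_1$ and those that generate great circles through $Qe_1$, and match the result to $\Delta_{\SS^{n-1}}$ via geodesics. The only cosmetic differences are that you compute at a general $Q$ rather than reducing to $\id$ by left-invariance, and you phrase the spherical side via the Hessian-trace formula instead of the paper's explicit local frame $\{Y_j\}$ near $e_1$; both streamline the exposition without changing the substance.
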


This lemma can be deduced from {the rather abstract} 
\cite[Theorem 2.7]{zbMATH05960760}.
However, for the sake of completeness, we provide here a full proof {in the 
specific case of $\SO(n)$ and $\SS^{n-1}$.}

\begin{proof}
Start by observing that it is enough to prove the identity in the statement for 
$Q= \id$ since then \eqref{id:LB_left} and the invariance under rotations of 
$\Delta_{\SS^{n - 1}} $ imply the general result.
Then, we only need to show that
\[ 
\Delta_{\SO(n)}  [f \circ F] (\id) = \Delta_{\SS^{n - 1}} f(e_1),
\]
In order to prove this identity we will use a convenient way of 
writing the Laplace--Beltrami operator $\Delta_g$ on a Riemannian manifold $M$ of 
dimension $m$. Assume $f \in C^\infty (M)$ and $p \in M$; if 
$\{ X_1, \dots, X_m \}$ is a local frame around the point $p$ and 
$\{ X_1|_p, \dots, X_m|_p \}$ is an orthonormal basis of $T_p M$, then
\begin{equation}
\label{id:LB}
\Delta_g f (p) = \sum_{j = 1}^m \big( X_j (X_j f) - (\nabla_{X_j} X_j) f \big) (p),
\end{equation}
where $\nabla$ denotes the covariant derivative on $M$.

In order to write the Laplace--Beltrami on $\SO(n)$, we 
define the vector field $X_\alpha$ given, at every point $Q \in \SO(n)$, by 
$X_\alpha|_Q = Q E_\alpha \in T_Q \SO(n) $ with $E_\alpha$ as in 
\eqref{id:Ealpha} and $\alpha_1 < \alpha_2$. Since 
$\{ E_\alpha : 1 \leq \alpha_1 < \alpha_2 \leq n \}$ is an orthonormal basis 
of $T_{\id} \SO(n)$, we have that 
$\{ X_\alpha : 1 \leq \alpha_1 < \alpha_2 \leq n \}$ is 
an orthonormal global frame. Hence, for every $Q \in \SO(n)$, we have that
\[ \Delta_{\SO(n)} [f \circ F] (Q) = \sum_{\alpha_1 < \alpha_2} \big( X_\alpha (X_\alpha [f \circ F]) - (\nabla_{X_\alpha} X_\alpha) [f \circ F] \big) (Q). \]
{Since the $X_\alpha$ are left invariant vector fields and the metric is bi-
invariant it follows that  
$ \sum_{\alpha_1 < \alpha_2} \nabla_{X_\alpha} X_\alpha = 0$ ---see for example 
\cite[Proposition 4.2]{geis} for a proof. Then,
\begin{equation} \label{id:SOnLB_0}
\Delta_{\SO(n)} [f \circ F] (Q) = \sum_{\alpha_1 < \alpha_2} X_\alpha (X_\alpha [f \circ F])(Q). 
\end{equation}
We will use this formula only for $Q = \id$. In order to compute the right-hand 
side {of \eqref{id:SOnLB_0},
we introduce} curves $\gamma_{\alpha}:(-\pi/2,\pi/2) \to \SO(n)$
with {$\alpha=(\alpha_1,\alpha_2)$ and }$1 \le \alpha_1<\alpha_2\le n$ {such that}  $ \gamma_{\alpha}(0) = \id $ {and}  
$ \frac{d \phantom{t}}{dt} \gamma_{\alpha}|_{t=0} = E_\alpha = X_\alpha|_{\id}$.
We define the curves $\gamma_\alpha$ by giving the explicit matrix components:
\[ 
\{\gamma_{\alpha}(t)\}_{\beta_1,\beta_2} =  
\begin{cases} 
 \cos(t), \; \phantom{-0i} \text{if} \quad  \beta_1= \beta_2 = \alpha_1 \text{ or } \beta_1= \beta_2 = \alpha_2, \\
 \sin(t),  \;  \phantom{-0o} \text{if} \quad  \beta_1=\alpha_1, \beta_2 = \alpha_2, \\
 - \sin(t), \; \phantom{o0} \text{if} \quad \beta_1=\alpha_2, \beta_2 = \alpha_1, \\
   1, \; \phantom{- o\sin(t)} \text{if} \quad   \beta_1 = \beta_2  \text{ and } \beta_1 \neq \alpha_1,\alpha_2, \\
  0, \; \phantom{- o\sin(t)} \text{otherwise}. \\
\end{cases} 
\]
Notice that the element  $\gamma_\alpha(t) \in \SO(n)$  is just a rotation of $t$ radians in the $2$ dimensional plane  determined by $e_{\alpha_1} $ and  $e_{\alpha_2}$ in $\R^n$ that fixes the rest of the basis vectors: $\gamma_\alpha(t) e_k = e_k $ if $k \neq \alpha_1,\alpha_2$.
One can also verify that $X_\alpha|_{\gamma_\alpha(t)} = \frac{d \phantom{t}}{dt}\gamma_\alpha(t)$, {so that $X_\alpha|_{\gamma(t)} \, g = \frac{d \phantom{t}}{dt} g(\gamma_{\alpha}(t))$ 
for any smooth $g$ defined around $\gamma(t)$}.  Also, since  $f \circ F  (\gamma_\alpha(t)) = f(\gamma_\alpha(t)e_1)$ we have that  $\gamma_\alpha(t)e_1 = e_1$ for all $t \in (-\pi/2,\pi/2)$ and {$\alpha$ with} $\alpha_1>1$.  Using this in  \eqref{id:SOnLB_0} yields  
\begin{equation}
\begin{split}
\label{id:SOnLB}
\Delta_{\SO(n)} [f \circ F] (\id) &=\sum_{\alpha_1<\alpha_2}  \left. \frac{d^2 \phantom{t}}{dt^2} \big[ f \circ F  (\gamma_\alpha(t) )\big] \right|_{t=0} 
\\
&= \sum_{ \alpha_2 = 2}^n  \left. \frac{d^2 \phantom{t}}{dt^2}f\left (\gamma_{(1,\alpha_2)}(t)e_1 \right) \right|_{t=0} . 
\end{split}
\end{equation}

Now, we use \eqref{id:LB} for the Laplace--Beltrami operator on $\SS^{n - 1}$.
We start by noting that every point in a neighbourhood of $e_1$ can be written
as $\gamma_{(1,2)} (t_2) \dots \gamma_{(1, n)} (t_n) e_1$ with
$(t_2, \dots, t_n) \in (-\pi/2,\pi/2)^{n - 1}$. Then, in that neighbourhood, we 
define the local frame $\{ Y_2, \dots, Y_n \}$ given by the vector fields
\[Y_j|_{\gamma_{(1,2)} (t_2) \dots \gamma_{(1, n)} (t_n) e_1} = \gamma_{(1,2)} (t_2) \dots \gamma_{(1, n)} (t_n) e_j.\]
Note that $\{ Y_2|_{e_1}, \dots, Y_n|_{e_1} \}$ is an orthonormal basis of $T_{e_1} \SS^{n - 1}$. For $2 \le j \le n$, introduce the curve $\widetilde{\gamma}_j : (-\pi/2,\pi/2) \to \SS^{n-1}  $ given by
\[ \widetilde{\gamma}_j(t) = \gamma_{(1,j)}(t)e_1 = (\cos t) e_1 {-} (\sin t) e_{j} .\]
Thus,
\[
Y_j|_{\widetilde{\gamma}_j(t)} = Y_j|_{\gamma_{(1,j)} (t) e_1} = \gamma_{(1, j)} ({t}) e_j = (\sin t) e_1 + (\cos t) e_{j} , 
\] 
and consequently,
\begin{equation}
\label{id:Yjcurve}
Y_j|_{\widetilde{\gamma}_j(t)} = {-} \frac{d \phantom{t}}{dt} \widetilde{\gamma}_{j}(t), \quad 2 \le j \le n.
\end{equation}
By the identity \eqref{id:LB} we have that
\[\Delta_{\SS^{n - 1}} f(e_1) = \sum_{j = 2}^n \big( Y_j (Y_j f) - (\nabla_{Y_j} Y_j) f \big) (e_1). \]
Then, the identity \eqref{id:Yjcurve}, together with the fact {that the  curves
$ \widetilde{\gamma}_j $} are maximal circles (i.e. geodesics), implies
\[\Delta_{\SS^{n - 1}} f(e_1) {= \sum_{j=2} ^n Y_j(0)[Y_j(t)f(\tilde \gamma(t))]\Big|_{t=0}} = \sum_{ j = 2}^n   \frac{d^2 \phantom{t}}{dt^2}f\left (\widetilde{\gamma}_j(t) \right) \Big |_{t=0}.\]
%
Then, returning to \eqref{id:SOnLB} we can directly conclude that
\[
\begin{split}
\Delta_{\SO(n)} [f \circ F] (\id) &= \sum_{ \alpha_2 = 2}^n  \frac{d^2 \phantom{t}}{dt^2}f\left (\gamma_{(1,\alpha_2)}(t)e_1 \right) \Big |_{t=0}
\\ 
& = \sum_{ j = 2}^n  \frac{d^2 \phantom{t}}{dt^2}f\left (\widetilde{\gamma}_j(t) \right) \Big |_{t=0} = \Delta_{\SS^{n - 1}} f(e_1). 
\end{split}
\]
This yields the desired result.}
\end{proof} 

{We conclude this appendix by proving  \Cref{prop:SOn_Sn-1_Sobolev}, a further invariance for the Sobolev spaces {on $\SO(n)$}.} 
 
\begin{proof}[Proof of \Cref{prop:SOn_Sn-1_Sobolev}]
{By the invariances stated in 
\Cref{prop:Sobolev_translation_transposed_invariance} it is enough to prove the 
case $v=e_1$ and $F(Q)= F_{e_1}(Q)= Q(e_1)$.}  On the one hand, let 
$\{ Y_k^m : k \in \N_0,\, m=1,\dots,d^\prime_k \}$ 
be an orthonormal basis of eigenfunctions of $L^2(\SS^{n - 1})$ with the 
corresponding eigenvalues $\{ \lambda^\prime_k : k \in \N_0 \}$ satisfying
$\lambda^\prime_0 = 0$, $ \lambda^\prime_k < \lambda^\prime_{k+1} $, and 
$-\Delta_{\SS^{n - 1}} Y_k^m = \lambda^\prime_k Y_k^m$. Set 
$H^\prime_k = \vspan \{ Y_k^1,\dots, Y_k^m \}$ and consider the projector 
$\Pi^\prime_k$ on $H^\prime_k$ defined in \Cref{sec:sobolev}.
On the other hand, let $\{ \psi_l^m : l \in \N_0,\, m=1,\dots,d_l \}$
be an orthonormal basis of $L^2(\SO(n))$ formed by the eigenfunctions
$\{ \psi_l^m : l \in \N_0,\, m=1,\dots,d_l \}$ whose eigenvalues are
$\{ \lambda_l : l \in \N_0 \}$ and satisfy
$\lambda_0 = 0$, $ \lambda_l < \lambda_{l+1} $, and 
$-\Delta_{\SO(n)} \psi_l^m = \lambda_l \psi_l^m$. Set 
$H_l = \vspan \{ \psi_l^1,\dots, \psi_l^m \}$ and the projector $\Pi_l$ on $H_l$ defined as in \Cref{sec:sobolev}.

By \Cref{prop:SOn_Sn-1}, the sequence
$\{ \phi_k^m : k \in \N_0,\, m=1,\dots,{ d^\prime _k} \}$ with $ \phi_k^m(Q) = S(\SS^{n - 1})^{1/2} Y_k^m (Q e_1)$ is an orthonormal 
set of $L^2(\SO(n))$.  Moreover, \Cref{lem:SOn_Sn-1_LB} implies that
\[ \Delta_{\SO(n)} \phi_k^m = {\lambda^\prime_k} \phi_k^m, \]
which tells us that
\[
\begin{split}
& \{ \lambda^\prime_k : k \in \N_0 \} \subset \{ \lambda_l : l \in \N_0 \},\\  & \lambda^\prime_k = \lambda_l \Longrightarrow \vspan \{ \phi_k^m : m=1,\dots,{d^\prime _k} \} \subset H_l.
\end{split}
\]
We will prove the proposition for $f\in C^\infty(\SS^{n - 1})$; the general case follows by density. By \Cref{prop:SOn_Sn-1} we have
\[ 
f (\theta) = S(\SS^{n - 1})^{1/2} \sum_{k \in \N_0} \sum_{{ m= 1}}^{{d_k ^\prime}} \Big( \int_{\SO(n)} \overline{\phi_k^m} f\circ F \, \dd \mu \Big)  Y_k^m(\theta), 
\]
which implies that $f\circ F$ is totally described by the orthonormal set
$\{ \phi_k^m : k \in \N_0,\, m=1,\dots,{d^\prime _k}  \}$. Thus, if given 
$\lambda_l$ there exists a $\lambda^\prime_k$ such that $\lambda^\prime_k =  \lambda_l$ , then
\[
\Pi_l [f \circ F] (Q) = \Pi^\prime_k f (Qe_1) \qquad \forall Q\in \SO(n),
\]
and \Cref{prop:SOn_Sn-1} implies that
\[ \| \Pi^\prime_k f \|^2_{L^2(\SS^{n - 1})} = S(\SS^{n - 1}) \| F^\ast \Pi^\prime_k f \|^2_{L^2(\SO(n))} = S(\SS^{n - 1}) \| \Pi_l f \circ F \|^2_{L^2(\SO(n))} \]
with $F^\ast \Pi^\prime_k f (Q) = \Pi^\prime_k f (Qe_1) $. 
Furthermore, if $\lambda_l \notin \{ \lambda^\prime_k : k \in \N_0 \}$ then $\Pi_l f \circ F (Q) = 0$ for all $Q \in \SO(n)$.
Thus, we can conclude that
\[ \sum_{l \in \N_0} (1 + \lambda_l)^s \| \Pi_l f \circ F \|^2_{L^2(\SO(n))} = S(\SS^{n - 1})^{-1}  \sum_{k \in \N_0} (1 + \lambda^\prime_k)^s \| \Pi^\prime_k f \|^2_{L^2(\SS^{n - 1})}. \]
This completes the proof of the proposition.
\end{proof}

\bibliographystyle{plain}
\bibliography{references}{}

\begin{thebibliography}{10}

\bibitem{zbMATH05960760}
Massimiliano {Berti} and Michela {Procesi}.
\newblock {Nonlinear wave and Schr\"odinger equations on compact Lie groups and
  homogeneous spaces.}
\newblock {\em {Duke Math. J.}}, 159(3):479--538, 2011.

\bibitem{caro2019determination}
Pedro Caro and Yavar Kian.
\newblock Determination of convection terms and quasi-linearities appearing in
  diffusion equations, 2019.

\bibitem{zbMATH02237598}
Jin {Cheng}, Gen {Nakamura}, and Erkki {Somersalo}.
\newblock {Uniqueness of identifying the convection term}.
\newblock {\em {Commun. Korean Math. Soc.}}, 16(3):405--413, 2001.

\bibitem{geis}
Michael~L. {Geis}.
\newblock {Notes on the Riemannian Geometry of Lie Groups}.
\newblock {\em {RHIT Undergrad. Math. J.}}, 15(2):54--74, 2014.

\bibitem{zbMATH01950198}
Lars {H\"ormander}.
\newblock {\em {The analysis of linear partial differential operators. I:
  Distribution theory and Fourier analysis. Reprint of the 2nd edition 1990.}}
\newblock Berlin: Springer, reprint of the 2nd edition 1990 edition, 2003.

\bibitem{zbMATH00739280}
Pertti {Mattila}.
\newblock {\em {Geometry of sets and measures in Euclidean spaces. Fractals and
  rectifiability.}}, volume~44.
\newblock Cambridge: Univ. Press, 1995.

\bibitem{zbMATH06460406}
Valter {Pohjola}.
\newblock {A uniqueness result for an inverse problem of the steady state
  convection-diffusion equation}.
\newblock {\em {SIAM J. Math. Anal.}}, 47(3):2084--2103, 2015.

\bibitem{zbMATH02123614}
Mikko {Salo}.
\newblock {\em {Inverse problems for nonsmooth first order perturbations of the
  Laplacian}}, volume 139.
\newblock Helsinki: Suomalainen Tiedeakatemia. Helsinki: Univ. of Helsinki,
  Department of Mathematics and Statistics (Thesis), 2004.

\bibitem{zbMATH03256081}
R.~T. {Seeley}.
\newblock {Complex powers of an elliptic operator.}
\newblock {Proc. Sympos. Pure Math. 10, 288-307 (1967)}, 1967.

\bibitem{zbMATH06271961}
Christopher~D. {Sogge}.
\newblock {\em {Hangzhou lectures on eigenfunctions of the Laplacian.}}, volume
  188.
\newblock Princeton, NJ: Princeton University Press, 2014.

\bibitem{zbMATH03814630}
Robert~S. {Strichartz}.
\newblock {Analysis of the Laplacian on a complete Riemannian manifold.}
\newblock {\em {J. Funct. Anal.}}, 52:48--79, 1983.

\end{thebibliography}

\section*{Affiliation of the authors}
\end{document}